\documentclass[8pt,a4paper]{article}

\usepackage{amsmath,amsfonts,amssymb}
\usepackage{bbm}
\usepackage{cases}

\usepackage{kotex}
\usepackage{lipsum}
\usepackage{subcaption}
\usepackage{xcolor}
\usepackage{graphicx}
\usepackage{ulem}
\newtheorem{defn}{Definition}[section]
\newtheorem{theo}[defn]{Theorem}
\newtheorem{lem}[defn]{Lemma}
\newtheorem{prop}[defn]{Proposition}
\newtheorem{cor}[defn]{Corollary}
\newtheorem{rem}[defn]{Remark}

\newenvironment{proof}{{\bf Proof }}{{\vskip 0.1cm \hfill$\Box$}}
\begin{document} 

\noindent
{\Large \bf A weighted semigroup approach to exponential stability in linear parabolic equations}
\\ \\
\bigskip
\noindent
{\bf Haesung Lee}  \\
\noindent
{\bf Abstract.}  
This paper establishes the exponential $L^2$-stability of the unique solutions to initial-boundary value problems for linear parabolic partial differential equations with general drift and zero-order coefficients in bounded domains. The key idea lies in constructing a suitable Dirichlet form with respect to a weighted measure $\mu = \rho\,dx$ and identifying the corresponding sub-Markovian $C_0$-semigroup of contractions on $L^2(U, \mu)$ with the unique weak solution. Remarkably, the exponential $L^2$-stability remains valid even when the zero-order term vanishes, and it holds robustly for all drift coefficients $\mathbf{H} \in L^p(U, \mathbb{R}^d)$ with $p \in (d, \infty)$.
\\ \\
\noindent
{Mathematics Subject Classification (2020): {Primary: 35B40, 47D07, 31C25, Secondary: 35K20, 35K90, 60J60}}\\

\noindent 
{Keywords: Dirichlet forms, exponential stability, weighted semigroups, initial-boundary value problems, weak solutions, divergence-free transformation
}

\section{Introduction} \label{intro}
In this paper, we consider the initial-boundary value problem (Cauchy-Dirichlet problem) for a linear parabolic partial differential equation in a bounded domain $U \subset \mathbb{R}^d$ with $d \geq 2$:
\begin{equation} \label{bvpara}
\left\{
\begin{aligned}
    u_t - \operatorname{div}(A \nabla u) + \langle \mathbf{H}, \nabla u \rangle + (c + \theta)u &= 0 && \text{in } U \times (0, T), \\
    u &= 0 && \text{on } \partial U \times [0, T], \\
    u &= g && \text{on } U \times \{t = 0\},
\end{aligned}
\right.
\end{equation}
where the precise assumptions on the coefficients and initial data are stated in our main result, Theorem~\ref{maintheore}. The primary goal of this paper is to establish not only the existence and uniqueness of weak solutions to \eqref{bvpara}, but also their exponential stability toward zero in $L^2(U)$ (see Theorem~\ref{maintheore}).\\
The well-posedness of initial-boundary value problems for linear parabolic equations such as \eqref{bvpara} has been extensively studied in the mathematical literature.  For instance, in a standard PDE textbook \cite{E10}, the existence result for \eqref{bvpara} is obtained via Galerkin methods under the assumption that the coefficients are bounded (see \cite[Section 7.1]{E10}). If the coefficients are additionally Hölder continuous and the initial data $g$ is continuous on $\overline{U}$, then the existence and uniqueness of classical solutions are also guaranteed in \cite[Chapter 2 and Proposition C.3.2]{Lo07} (cf. \cite{MPW02}).\\
As in the elliptic case, the H\"{o}lder continuity condition on the coefficients appears to be essential for the existence of classical solutions.  However, in many applications, the coefficients may fail to be continuous, and in particular, the drift coefficient $\mathbf{H}$ and the zero-order coefficient $c+\theta$ may be locally unbounded and the initial data $g$ is given only as a function in $L^2(U)$. In such cases, it is natural to consider weak solutions in the sense of Definition \ref{startdefn}.\\
The existence of weak solutions to \eqref{bvpara} under appropriate coercivity assumptions on the associated bilinear form can be found in \cite{LM72} (cf. \cite[Theorem 10.9]{Br11}). In particular, when the coefficients are time-independent, semigroup theory becomes a powerful tool for analyzing a solution to \eqref{bvpara}. For example, \cite[Section~7.4]{E10} constructs a strongly continuous semigroup on $L^2(U)$ via the Hille-Yosida theorem and identifies the weak solution to \eqref{bvpara} under the assumption that $\mathbf{H}$ and $c+\theta$ are bounded (cf. \cite{RR04, Jo13}). Moreover, the constructed semigroup satisfies a $\gamma$-contraction property, i.e., there exists a constant $\gamma > 0$ such that
\begin{equation} \label{gammcontrac}
\|u(\cdot, t)\|_{L^2(U)} \leq e^{\gamma t} \|g\|_{L^2(U)}  \quad \text{ for all $t \in (0, \infty)$},
\end{equation}
as shown in \cite[Section~7.4, Theorem~5]{E10}. However, the estimate \eqref{gammcontrac} becomes ineffective for large $t$, since its right-hand side grows exponentially. Moreover, the constant $\gamma$ depends on the coefficients, and its explicit computation is generally not feasible in closed form. \\
In contrast, when the coefficients are H\"{o}lder continuous, $c \geq 0$ in $U$ and a constant $\theta > 0$ is assumed, one can derive an explicit exponential decay estimate of the form in $L^{\infty}(U)$:
\begin{equation} \label{expondecaestim}
\|u(\cdot, t)\|_{L^{\infty}(U)} \leq e^{-\theta t} \|g\|_{L^{\infty}(U)},
\end{equation}
as shown in \cite[Proposition~C.3.2]{Lo07}. \\ 
This motivates the investigation of the critical case $c \equiv 0$, $\theta = 0$, in which no zero-order term is present. The case where the zero-order term vanishes, that is, $c + \theta \equiv 0$, arises naturally in stochastic analysis (see \cite{L25, L25ai, L25jm}). In particular, the Kolmogorov equation associated with a diffusion process typically involves a second-order partial differential operator without a zero-order term. In such settings, no artificial damping parameter $\theta$ is introduced, yet the long-time behavior of solutions remains a central and subtle issue.
To the best of our knowledge, there is no existing general result that proves the exponential $L^2$-decay of solutions to \eqref{bvpara} in this critical setting,  $c + \theta \equiv 0$,  even when the coefficients $A$ and $\mathbf{H}$ are smooth.\\
In the case where \eqref{bvpara} corresponds to the heat equation, the spectral decomposition of a solution with respect to the Dirichlet Laplacian
\[
u(\cdot, t) = \sum_{n=1}^\infty e^{-\lambda_n t} \langle g, e_n\rangle_{L^2(U)} e_n,
\]
where $\{e_n\}_{n \geq 1}$ and $\{\lambda_n\}_{n \geq 1}$ denote the Dirichlet eigenfunctions and the corresponding eigenvalues of $-\Delta$, respectively, arranged in increasing order $0 < \lambda_1 \leq \lambda_2 \leq \cdots \to \infty$ (see \cite[Section 8.2]{Ar23}), 
immediately implies the exponential convergence
\[
\|u(\cdot, t)\|_{L^2(U)} \leq e^{-\lambda_1 t} \|g\|_{L^2(U)}.
\]
In particular, for the Laplacian with Dirichlet boundary conditions, the first eigenvalue $\lambda_1$ can be characterized variationally (see \cite[Section 6.5]{E10}). \\
To address more general operators, one can employ a spectral approach based on abstract semigroup theory. In particular, the exponential decay estimate
\begin{equation*}% \label{exponenestim}
\|u(\cdot, t)\|_{L^2(U)} \leq e^{-\lambda_1 t} \|g\|_{L^2(U)}, \quad \text{for all } t > 0,
\end{equation*}
can be established under suitable spectral and semigroup-growth assumptions on the generator (see \cite[Chapter 4]{P83} and \cite[Chapter 5]{ABHFN11}). However, when the domain is irregular or the operator is non-self-adjoint, particularly in the presence of nontrivial drift coefficients $\mathbf{H}$, the verification of such a spectral bound and its explicit computation  become significantly more difficult. \\
 In fact, even if the drift coefficient $\mathbf{H}$ is smooth, the presence of a strongly positive divergence, i.e., $\operatorname{div} \mathbf{H} \gg 0$, may destroy coercivity and lead one to suspect that exponential decay fails. Surprisingly, the main results of this paper show that such pessimistic expectations, namely that exponential decay may fail under strong drift, are not valid. Specifically, in this paper we prove that the unique solution to \eqref{bvpara} converges exponentially to zero under general conditions, even in the presence of highly nontrivial drift. Before stating our main result, we introduce the main structural assumptions {\bf (S)} on the coefficients $A$ and $\mathbf{H}$.
\begin{description}
\item[(S):] $U$ is a bounded open subset of $\mathbb{R}^d$ with $d \geq 2$, $B_R(x_0)$ is an open ball in $\mathbb{R}^d$ with $\overline{U} \subset B_R(x_0)$, $\mathbf{H} \in L^p(U, \mathbb{R}^d)$, $h \in L^p(U)$ with $p \in (d, \infty)$ satisfies $\|\mathbf{H}\| \leq h$ a.e. in $U$. We extend $\mathbf{H}$ and $h$ by zero to $\mathbb{R}^d$, and denote the extensions again by $\mathbf{H}$ and $h$, respectively.
$A = (a_{ij})_{1 \leq i,j \leq d}$ is a (possibly non-symmetric) matrix of measurable functions on $\mathbb{R}^d$ such that for some constants $M > 0$ and $\lambda > 0$, it holds that
\begin{equation} \label{ellipticity}
\max_{1 \leq i,j \leq d} |a_{ij}(x)| \leq M, \quad \langle A(x)\xi, \xi \rangle \geq \lambda \|\xi\|^2 \quad \text{for a.e. } x \in \mathbb{R}^d \text{ and for all } \xi \in \mathbb{R}^d.
\end{equation}
\end{description}
We now present our main result, which reveals a robust mechanism for exponential decay even in the absence of standard coercivity conditions. Notably, the $L^2$-exponential decay estimate in Theorem~\ref{maintheore}(ii) is obtained without assuming any symmetry of the underlying operator, indicating a certain robustness in the analytic framework that follows.
\begin{theo} \label{maintheore}
Assume that {\bf (S)} holds, $\theta \in [0, \infty)$ is a constant, and $c \in L^s(U)$ satisfies $c \geq 0$ a.e. in $U$, where $s \in (1, \infty)$ if $d = 2$ and $s := \frac{d}{2}$ if $d \geq 3$. Let $g \in L^2(U)$, $T \in (0, \infty)$, and let $\mathbf{u} \in C([0, \infty); L^2(U)) \cap L^2(0,T; H^{1,2}_0(U))$ denote the (unique) weak solution to \eqref{bvpara} constructed in Theorem~\ref{mainwellposth} (see Definition \ref{startdefn}). Then the following estimates hold:
\begin{itemize}
\item[(i)]
\begin{equation} \label{energyrhonnewori}
\|\mathbf{u} \|_{L^2(0,T;H^{1,2}_0(U))} \leq \left(\frac{K_1}{2\lambda} \right)^{\frac12} \| g\|_{L^2(U)},
\end{equation}
where $K_1 \geq 1$ is the constant from Theorem~\ref{existinvar}, depending only on $d, \lambda, M, R, p$, and $\|h\|_{L^p(U)}$.
\item[(ii)]
\begin{equation} \label{expdecaacer}
\|\mathbf{u}(t)\|_{L^2(U)} \leq K_1^{\frac12} e^{-\kappa t} \|g\|_{L^2(U)} \quad \text{for all } t \in (0, \infty),
\end{equation}
where 
$$
\kappa := \theta + \frac{d^2 \lambda}{4K_1 (d - 1)^2 |U|^{\frac{2}{d}}}.
$$
\item[(iii)]
Let $r \in [1, \infty)$. If $g \in L^2(U) \cap L^r(U)$, then $\mathbf{u} \in C([0, \infty); L^r(U))$, and
\begin{equation} \label{linftcontrarca}
\| \mathbf{u}(t) \|_{L^r(U)} \leq K_1^{\frac1r} e^{-\theta t} \|g \|_{L^r(U)} \quad \text{for all } t \in (0, \infty).
\end{equation}
Moreover, if $g \in L^{\infty}(U)$, then
\begin{equation} \label{linftycontra}
\| \mathbf{u}(t) \|_{L^{\infty}(U)} \leq e^{-\theta t} \|g \|_{L^{\infty}(U)} \quad \text{for all } t \in (0, \infty).
\end{equation}
\end{itemize}
\end{theo}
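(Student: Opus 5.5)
The approach is to work in the weighted space $L^2(U,\mu)$, $\mu=\rho\,dx$, where $\rho$ is the density from Theorem~\ref{existinvar}. I expect that theorem to provide $\rho \in H^{1,2}(B_R(x_0))\cap C(\overline U)$ with $K_1^{-1}\le \rho\le 1$ (or $1 \le \rho \le K_1$ after normalization) on $\overline U$. It should also give that $\mathbf{B}:=\rho\mathbf{H}-\rho\,\cdot$(the part coming from the non-symmetric $A$ and $\nabla\rho$) is weakly divergence free, i.e. $\int \langle \rho\mathbf{H} + \ldots,\nabla \varphi\rangle dx=0$ for $\varphi\in C_c^\infty$. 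This is the divergence-free transformation. With it, the operator $Lu=\operatorname{div}(A\nabla u)-\langle\mathbf H,\nabla u\rangle-(c+\theta)u$ admits a bilinear form on $H^{1,2}_0(U)$ with respect to $\mu$,
\[
\mathcal E(u,v)=\int_U \langle A\nabla u,\nabla v\rangle\rho\,dx+\int_U\langle \mathbf{B},\nabla u\rangle v\,dx+\int_U (c+\theta)uv\,\rho\,dx ,
\]
whose drift part is antisymmetric on the diagonal. Hence $\mathcal E(u,u)=\int\langle A\nabla u,\nabla u\rangle\rho\,dx+\int(c+\theta)u^2\rho\,dx$. I take it that Theorem~\ref{mainwellposth} identifies $\mathbf u(t)=T_tg$ with the sub-Markovian $C_0$-contraction semigroup $(T_t)$ of this (non-symmetric, sectorial) Dirichlet form on $L^2(U,\mu)$. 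I will use that identification.

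For (ii), differentiate $\|\mathbf u(t)\|^2_{L^2(\mu)}$ in time, justified via the energy equality for weak solutions or by approximating $g$ with elements of $D(L)$:
\[
\tfrac{d}{dt}\|\mathbf u\|^2_{L^2(\mu)}=-2\mathcal E(\mathbf u,\mathbf u)\le -2\lambda K_1^{-1}\|\nabla \mathbf u\|^2_{L^2(U)}-2\theta\|\mathbf u\|^2_{L^2(\mu)} .
\]
Then use a Poincaré inequality with explicit constant, $\|v\|_{L^2(U)}^2\le \frac{4(d-1)^2|U|^{2/d}}{d^2}\|\nabla v\|_{L^2(U)}^2$ for $v\in H^{1,2}_0(U)$. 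The constant matching $\kappa$ suggests this comes from Sobolev/Faber–Krahn type bounds, or from a Hardy-type inequality via $W^{1,1}$-Sobolev with the isoperimetric constant applied to $v^2$. Combine it with $\|v\|^2_{L^2(\mu)}\le\|v\|^2_{L^2}$ (using $\rho\le1$). This gives $\frac{d}{dt}\|\mathbf u\|^2_{L^2(\mu)}\le -2\kappa\|\mathbf u\|^2_{L^2(\mu)}$, and Gronwall yields $\|\mathbf u(t)\|_{L^2(\mu)}\le e^{-\kappa t}\|g\|_{L^2(\mu)}$. Converting norms costs $K_1^{1/2}$, because $\|\cdot\|_{L^2(U)}^2\le K_1\|\cdot\|_{L^2(\mu)}^2$ and $\|g\|_{L^2(\mu)}\le \|g\|_{L^2(U)}$. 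For (i), integrate the same identity over $(0,T)$ to get $2\lambda K_1^{-1}\int_0^T\|\nabla\mathbf u\|^2\,dt\le \|g\|^2_{L^2(\mu)}\le\|g\|^2_{L^2}$. This gives (i), with the $H^{1,2}_0$ norm taken as $\|\nabla\cdot\|_{L^2}$. The constants must be matched to whatever normalization Theorem~\ref{existinvar} actually uses.

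For (iii), sub-Markovianity gives $|T_tg|\le T_t|g|$ and $0\le T_t f\le \|f\|_\infty$, and the factor $e^{-\theta t}$ comes from splitting off $\theta$: $T_t=e^{-\theta t}S_t$, where $S_t$ is the semigroup with $\theta=0$. This yields \eqref{linftycontra}. For $L^r$ one needs $\mu$-sub-invariance of the dual semigroup, $\int S_t f\,d\mu\le\int f\,d\mu$ for $f\ge0$. It holds because the dual form with drift $-\mathbf B$ is again a Dirichlet form (divergence-free drift), so $S_t$ is also $L^1(\mu)$-contractive. Riesz–Thorin then gives $L^r(\mu)$-contractivity, and comparing measures costs $K_1^{1/r}$. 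Continuity in $L^r(U)$ follows from strong continuity on $L^r(\mu)$ for $r<\infty$, by density and the uniform bounds.

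The main obstacle I expect is the rigorous energy identity together with the sharp Poincaré constant. In particular, the antisymmetry $\int\langle\mathbf B,\nabla u\rangle u\,dx=0$ must be verified for $u\in H^{1,2}_0(U)$ with only $\mathbf H\in L^p$. I would obtain it by approximating with $u_n\in C_c^\infty(U)$, applying the divergence-free property to $\varphi=u_n^2$, and using $\rho\mathbf H\in L^p$ with $p>d$ together with Sobolev embedding to pass to the limit.
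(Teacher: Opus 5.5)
\textbf{There is a genuine gap.} Your weighted computation matches the paper's Theorem~\ref{weaksolsemi}: the energy inequality in $L^2(U,\mu)$, the Poincar\'e constant $\frac{2(d-1)}{d}|U|^{1/d}$ from \eqref{sobolhold}, the Harnack ratio, and for (iii) sub-Markovianity with $L^1(\mu)$-contractivity of the adjoint and Riesz--Thorin. Your bookkeeping also reproduces $\kappa$ and $K_1/(2\lambda)$. The problem is applying this to $\mathbf{u}$ itself.

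\textbf{First, the identification is not given.} Theorem~\ref{mainwellposth} does not identify $\mathbf{u}$ with the weighted semigroup. It gives $\mathbf{u}(t)=e^{\gamma t}S_tg$, where $(S_t)_{t>0}$ is generated by the shifted form $\mathcal{B}_\gamma$ on the unweighted space $L^2(U)$. The identity $\mathbf{u}(t)=e^{-\theta t}T_tg$ has to be proved. This is done by showing the weak formulations with respect to $dx$ and $\mu$ are equivalent (Theorem~\ref{leetransform}, testing with $\rho\varphi$ and $\psi/\rho$) and then using uniqueness (Theorem~\ref{theouniquene}).

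\textbf{Second, and more seriously, you lack the regularity of $\rho$.} That equivalence, the Dirichlet form construction under \textbf{(T)}, and your alternative of testing with $\rho\mathbf{u}(t)$ all need $\nabla\rho\in L^p(U)$ with $p>d$. Under \textbf{(S)}, $A$ is merely measurable, and Theorem~\ref{existinvar} then only gives $\rho\in H^{1,2}\cap C$. The regularity $\rho\in H^{1,p}(U)$ is available only for continuous $a_{ij}$ (Theorem~\ref{existinvar}(iii)). With $\nabla\rho$ only in $L^2$:
\begin{itemize}
\item $\mathbf{B}=\mathbf{H}+\rho^{-1}A^T\nabla\rho$ is not known to lie in $L^p$, so \textbf{(T)} is not available;
\item H\"older and Sobolev do not control $\int_U\langle A^T\nabla\rho,\nabla u\rangle v\,dx$ on $H^{1,2}_0(U)\times H^{1,2}_0(U)$;
\item you cannot conclude $\rho\mathbf{u}(t)\in H^{1,2}_0(U)$, which needs $\mathbf{u}(t)\nabla\rho\in L^2(U)$.
\end{itemize}
Your antisymmetry argument only treats the $\rho\mathbf{H}$ part of the drift. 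The $A^T\nabla\rho$ part is the one it cannot handle. One might recover these facts from Morrey estimates for $\nabla\rho$ plus a trace inequality, but that is a substantial ingredient absent from your plan.

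\textbf{The missing idea is approximation in the coefficients.} The paper mollifies $A$ into smooth $A_n$ with the same $\lambda$ and $M$. Then $\rho_n\in H^{1,p}(U)$ and the weighted machinery applies to the solution $\mathbf{u}_n$ with coefficient $A_n$. Crucially, $K_1$ depends only on $d,\lambda,M,R,p,\|h\|_{L^p(U)}$, so all weighted bounds for $\mathbf{u}_n$ are uniform in $n$.

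The paper then shows $\mathbf{u}_n\to\mathbf{u}$ weakly in $L^2(0,T;H^{1,2}_0(U))$. The $n$-uniform bound \eqref{hcondienergy} gives a weak limit. This limit satisfies the limiting weak formulation and attains the initial value $g$, checked with test functions $\tilde{\mathbf{v}}\in C^1([0,T];H^{1,2}_0(U))$ with $\tilde{\mathbf{v}}(T)=0$. By uniqueness it equals $\mathbf{u}$.

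The estimates are then transferred to the limit:
\begin{itemize}
\item for (i), by weak lower semicontinuity;
\item for (ii), by Lemma~\ref{weckcol2lem} and continuity of $\mathbf{u}$ in $L^2(U)$;
\item for (iii), by the Banach--Saks theorem (Ces\`aro means converge in $H^{1,2}_0(U)$ for a.e.\ $t$), weak or weak$^*$ compactness at those times, and an a.e.-convergence argument at the remaining times.
\end{itemize}
Continuity in $L^r(U)$ for $r>2$ must be proved separately, by interpolating the $L^\infty$ bound against $L^2$-continuity and using density. This is needed because strong continuity of $(T^{(n)}_t)_{t>0}$ on $L^r(U,\mu_n)$ does not survive a weak limit.

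If you restricted to continuous $A$, your direct route of testing with $\rho\mathbf{u}$ would give (i) and (ii) without approximation. Part (iii) would still need the identification with the sub-Markovian weighted semigroup via Theorem~\ref{leetransform} and uniqueness.
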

We now outline the main steps underlying the derivation of our main result. First, the existence and uniqueness of weak solutions to \eqref{bvpara} are established in Section~\ref{semigrouponl2dx} by employing a standard coercive form $(\mathcal{B}, H^{1,2}_0(U))$ on \( L^2(U) \) and constructing the associated semigroup with respect to the Lebesgue measure. Next, we characterize weak solutions to equation \eqref{bvpara} via an equivalent integral formulation (see Theorem~\ref{equiproweakso}). The core idea of the paper is then to transform the equation by introducing a weighted measure \( \mu = \rho\, dx \) so that the drift coefficient becomes divergence-free with respect to \( \mu \) as in \eqref{divfreecond}. This strategy, originally introduced in \cite{L25jm}, is further developed in Theorems~\ref{existinvar} and~\ref{leetransform}, and elaborated in Remark~\ref{leetransformrem}. Based on this transformation, we construct the associated Dirichlet form \( (\mathcal{E}, D(\mathcal{E})) \) on \( L^2(U, \mu) \) and derive a sub-Markovian \( C_0 \)-semigroup of contractions \( (T_t)_{t > 0} \) on \( L^2(U, \mu) \). This framework allows us to establish Theorem~\ref{weaksolsemi}, which forms the analytic foundation for our main result. Finally, to prove Theorem~\ref{maintheore}, we employ refined approximation techniques and compactness arguments, including weak convergence methods and the Banach–Saks theorem. The complete proof of Theorem~\ref{maintheore} is presented in detail after Remark~\ref{leetransformrem}. \\
As mentioned earlier, even when the zero-order term in \eqref{bvpara} vanishes, i.e., $c + \theta \equiv 0$, the positivity of the decay rate $\kappa= \frac{d^2 \lambda}{4K_1 (d - 1)^2 |U|^{2/d}}>0$ in Theorem~\ref{maintheore}(ii) remains valid. This guarantees that the exponential convergence persists robustly. In fact, the constant $K_1 \geq 1$, which appears in the expression of $\kappa$, originates from the elliptic Harnack inequality established in \cite{L25jm}. While its dependence on certain quantitative parameters is known, a more precise determination of the constant $K_1$ would potentially allow us to optimize the lower bound of $\kappa$.\\
While much of the recent literature on semigroup-based analysis in fractional and nonlocal settings (cf. \cite{Ar18, Sti19, CW20, DG21, C25}) focuses on integral-type operators, our work demonstrates that a semigroup-based approach can yield robust exponential decay even for local divergence-form equations with non-symmetric drift. This distinction highlights the broader applicability of semigroup theory and sets the stage for the analytic transformation introduced in this paper.
\\
In addition, let us consider the following non-divergence type initial-boundary value problem under the assumptions that $c + \theta \equiv 0$, $g \in C(\overline{U})$, and the drift coefficient $\hat{\mathbf{H}}$ is assumed to be smooth:
\begin{equation} \label{bvparanondiv}
\left\{
\begin{aligned}
    u_t -\frac12 \Delta u- \langle \hat{\mathbf{H}}, \nabla u \rangle  &= 0 && \text{in } U \times (0, T), \\
    u &= 0 && \text{on } \partial U \times [0, T], \\
    u &= g && \text{on } U \times \{t = 0\},
\end{aligned}
\right.
\end{equation}
As previously mentioned, it is  known that if $\hat{\mathbf{H}}$ is smooth, then the problem \eqref{bvparanondiv} admits a unique classical solution (see \cite[Chapter 2 and Proposition C.3.2]{Lo07}). Furthermore, assuming that $\hat{\mathbf{H}}$ satisfies a linear growth condition, we may consider the corresponding stochastic differential equation for each $x \in \mathbb{R}^d$ with a standard Brownian motion $(W_t)_{t \geq 0}$ on a probability space $(\Omega, \mathcal{F}, \mathbb{P})$:
\begin{equation*}
    X^x_t = x + W_t + \int_0^t \hat{\mathbf{H}}(X^x_s)\, ds, \quad t \geq 0, \quad \text{ $\mathbb{P}$-a.s.}
\end{equation*}
This SDE admits a unique strong solution $(X^x_t)_{t \geq 0}$ (\cite[Theorem 2.5.2]{Lo07}), and it follows from \cite[Theorem 2.4.4]{Lo07} that the associated probabilistic representation of the solution $u$ to \eqref{bvparanondiv} is given by
\begin{equation} \label{stochobjects}
    u(x, t) = \mathbb{E}\left[ g(X^x_t) \cdot \mathbf{1}_{\{t < \tau^x_U\}} \right], \quad (x, t) \in U \times (0, \infty),
\end{equation}
where $\tau^x_U := \inf \{ t \geq 0 : X^x_t \notin U \}$ is the exit time from the domain $U$ and $\mathbb{E}$ is the expectation with respect to $(\Omega, \mathcal{F}, \mathbb{P})$. Since \eqref{bvparanondiv} corresponds to the special case of \eqref{bvpara} with $A := \tfrac{1}{2} I$ and $\mathbf{H} := -\hat{\mathbf{H}}$, it follows from Theorem \ref{maintheore}(ii) that
\begin{equation*}
    \|u(\cdot, t)\|_{L^2(U)} \leq K_1^{\frac{1}{2}} e^{-\kappa t} \|g\|_{L^2(U)}.
\end{equation*}
This provides a quantitative estimate on the long-time behavior of the corresponding killed diffusion semigroup, demonstrating exponential decay in the $L^2$-norm. \\
This paper is organized as follows. Section~\ref{notconv} introduces the basic notions and conventions used throughout this paper. In Section~\ref{semigrouponl2dx}, we establish the existence and uniqueness of weak solutions to equation~\eqref{bvpara} by constructing a coercive bilinear form and the associated semigroup on the space \( L^2(U) \). This is done under a slightly more general assumption {\bf (Hy)} on the drift coefficient than {\bf (S)} required in the main result (Theorem \ref{maintheore}). Section~\ref{dfapproach} develops the analytic foundation necessary for our main results, where we construct a Dirichlet form and the associated semigroup with respect to a weighted measure. In Section~\ref{mainsection}, we apply the divergence-free transformation, which reformulates equation~\eqref{bvpara} into an equivalent equation with divergence-free drift coefficients. This transformation captures the core idea of the paper and leads to a detailed proof of the main result, Theorem~\ref{maintheore}. Finally, Section~\ref{discuss} presents further discussions and suggests directions for future research.

\section{Notations and conventions} \label{notconv}
Throughout this paper, we consider the Euclidean space $\mathbb{R}^d$, equipped with the standard inner product $\langle \cdot, \cdot \rangle$ and the corresponding Euclidean norm $\|\cdot\|$. For $x_0 \in \mathbb{R}^d$ and $r > 0$, we define $B_r(x_0) := \{ x \in \mathbb{R}^d : \|x - x_0\| < r \}$. We write $a \wedge b := \min\{a,b\}$ and $a \vee b := \max\{a,b\}$ for $a,b \in \mathbb{R}$, and denote by $1_W$ the indicator function of a set $W \subset \mathbb{R}^d$. The Lebesgue measure is denoted by $dx$, and $|E| := dx(E)$ for a Lebesgue measurable set $E$. Let $U \subset \mathbb{R}^d$ be an open set. We denote by $\mathcal{B}(U)$ the space of all Borel measurable functions $f : U \to \mathbb{R}$. For $\mathcal{A} \subset \mathcal{B}(U)$, we write $\mathcal{A}_0$ for the set of all functions in $\mathcal{A}$ with compact support in $U$.
$C(U)$ and $C(\overline{U})$ denote the sets of all continuous functions on $U$ and $\overline{U}$, respectively. 
We define $C_0(U) := C(U)_0$. For $k \in \mathbb{N} \cup \{\infty\}$, we write $C^k(U)$ for the space of functions with continuous derivatives up to order $k$, and $C_0^k(U) := C^k(U) \cap C_0(U)$. %; similarly, $C^k(\overline{U})$ denotes functions in $C^k(U)$ whose derivatives extend continuously to $\overline{U}$.  
Let $r \in [1, \infty]$, and let $L^r(U,\mu)$ denote the weighted $L^r$-space on $U$ with respect to a weighted measure $\mu$, equipped with the usual norm $\|\cdot\|_{L^r(U, \mu)}$. We write $L^r(U, dx)$ as $L^r(U)$. The space $L^r(U, \mathbb{R}^d, \mu)$ consists of $\mathbb{R}^d$-valued functions $\mathbf{F} = (f_1, \dots, f_d)$ with each $f_i \in L^r(U, \mu)$, and its norm is given by $\|\mathbf{F}\|_{L^r(U,\mu)} :=\big\| \| \mathbf{F} \| \big \|_{L^r(U, \mu)}$.
%We write $L^r_{\mathrm{loc}}(U)$ and $L^r_{\mathrm{loc}}(U, \mathbb{R}^d)$ for functions locally in $L^r$ over $U$.
Let $U$ be a bounded open subset of $\mathbb{R}^d$. The Sobolev space $H^{1,2}(U)$ consists of functions $f \in L^2(U)$ whose weak partial derivatives $\partial_i f$ belong to $L^2(U)$ for all $1 \leq i \leq d$, and it is equipped with the standard $H^{1,2}$-norm. %$\|f\|_{H^{1,2}(U)} := \|\nabla f\|_{L^2(U)}$, which is equivalent to the standard Sobolev norm by the Poincaré inequality.
We define $H_0^{1,2}(U)$ as the closure of $C_0^\infty(U)$ in $H^{1,2}(U)$, and define $H^{-1,2}(U)$ as the dual of $H_0^{1,2}(U)$. 
In particular, by the Poincaré inequality, we write $\| u\|_{H^{1,2}_0(U)} := \|  \nabla u \|_{L^2(U)}$ for all $u \in H^{1,2}_0(U)$.
%The second-order Sobolev space $H^{2,r}(U)$ consists of functions in $L^r(U)$ with all second-order weak derivatives $\partial_i \partial_j f \in L^r(U)$. 
The Laplacian is given by $\Delta f := \sum_{i=1}^d \partial_i^2 f$, and the Hessian by $\nabla^2 f := (\partial_i \partial_j f)_{1 \leq i,j \leq d}$.
Let $X$ be a Banach space. For $T > 0$ and $s \in [1, \infty]$, the Bochner space $L^s(0, T; X)$ consists of strongly measurable functions $\mathbf{u} : [0, T] \to X$ such that
\[
\|\mathbf{u}\|_{L^s(0,T;X)} := \left( \int_0^T \|\mathbf{u}(t)\|_X^s\,dt \right)^{1/s} < \infty,
\]
with the essential supremum norm used when $s = \infty$. The space $C([0,T]; X)$ denotes the set of continuous functions from $[0,T]$ into $X$, equipped with the supremum norm. We denote by $C([0,\infty); X)$ the set of all continuous functions $\mathbf{u} : [0, \infty) \to X$. For each $k \in \mathbb{N}$, the set $C^k((0,\infty); X)$ consists of all functions whose derivatives up to order $k$ exist and are continuous on $(0, \infty)$. We define
$C^\infty((0,\infty); X) := \bigcap_{k=1}^\infty C^k((0,\infty); X)$.

\section{Existence and uniqueness of weak solutions} \label{semigrouponl2dx}
In this section, we work under the following main hypothesis. \\ \\
\textbf{(Hy)}: $U$ is a bounded open subset of $\mathbb{R}^d$ with $d \geq 2$, $q \in (2, \infty)$ if $d=2$ and $q:=d$ if $d \geq 3$.
$\mathbf{H} \in L^q(U, \mathbb{R}^d)$ with $\|\mathbf{H}\| \leq h$ in $U$, where $h \in L^q(U)$, and $A = (a_{ij})_{1 \leq i,j \leq d}$ is a \textit{(possibly non-symmetric) matrix of measurable functions on $\mathbb{R}^d$} such that for some constants $M > 0$ and $\lambda > 0$,  \eqref{ellipticity} holds.
$\theta \in [0, \infty)$ is a constant, and $c \in L^{s}(U)$ satisfies $c \geq 0$ a.e. in $U$,  where $s \in (1, \infty)$ if $d=2$, and $s:=\frac{d}{2}$ if $d \geq 3$.\\
\centerline{}
%One can readily observe that the class of drift coefficients $\mathbf{H}$ in {\bf (Hy)} is strictly broader than that assumed in the structural condition {\bf (S)} introduced earlier.
The class of drift coefficients $\mathbf H$ in {\bf (Hy)} is broader than that in {\bf (S)}, and is strictly broader when $d\geq3$.
\centerline{}
%%%%%%%%%%%%%%
The following proposition may originate from Stampacchia, \cite{St65} (cf \cite{T73}), and {\bf (Hy)} is widely used to guarantee coercivity of the bilinear form associated with a linear elliptic operator. In particular, we explicitly compute and present the constants appearing in the coercivity inequalities.
\begin{prop} \label{stamenest}
Assume that {\bf (Hy)} holds. Define a bilinear form $\mathcal{B}: H^{1,2}_0(U) \times H^{1,2}_0(U) \rightarrow \mathbb{R}$ given by
$$
\mathcal{B}(f,g)= \int_{U} \langle A \nabla f, \nabla g \rangle  +\langle \mathbf{H}, \nabla f \rangle g+(c+\theta) f g \,dx, \quad f,g \in H^{1,2}_0(U).
$$
Then, the following hold:
\begin{itemize}
\item[(i)] 
Let $K>0$ be a constant given by
\[
K := 
\begin{cases}
dM + \dfrac{q}{q-2} |U|^{\frac{1}{2} - \frac{1}{q}} \|h\|_{L^q(U)} 
+ \left( {\dfrac{s}{s - 1} }\right)^2 |U|^{1 - \frac{1}{s}} \|c+\theta\|_{L^{s}(U)}, 
& \text{if } d = 2, \\[12pt]
dM + \dfrac{2(d-1)}{d-2} \|h\|_{L^d(U)} 
+ \left( \dfrac{2(d-1)}{d-2} \right)^2 \|c+\theta\|_{L^{\frac{d}{2}}(U)}, 
& \text{if } d \geq 3.
\end{cases}
\]
Then, 
$$
|\mathcal{B}(f,g)| \leq K \| \nabla f \|_{L^2(U)}  \| \nabla g \|_{L^2(U)} \quad \text{ for all $f,g \in H^{1,2}_0(U)$}.
$$
\item[(ii)]
Let $N \geq 0$ be a constant such that
\[
\begin{array}{ll}
\displaystyle
\left( \int_U 1_{\{ |h| \geq N \}} |h|^q \, dx \right)^{\frac{2}{q}} 
\leq \frac{\lambda^2}{4} \left( \frac{q-2}{q} \right)^2 |U|^{-1 + \frac{2}{q}}, 
& \text{if } d = 2, \\[12pt]
\displaystyle
\left( \int_U 1_{\{ |h| \geq N \}} |h|^d \, dx \right)^{\frac{2}{d}} 
\leq \frac{\lambda^2}{16} \left( \frac{d-2}{d-1} \right)^2, 
& \text{if } d \geq 3.
\end{array}
\]
Then,
\[
\mathcal{B}(f,f) +\frac{N^2}{\lambda} \| f\|^2_{L^2(U)}  \geq \frac{\lambda}{2}\|\nabla f \|^2_{L^2(U)} \quad \text{ for all $f \in H^{1,2}_0(U)$}.
\]
\end{itemize}
\end{prop}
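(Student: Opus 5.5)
The plan is to bound each of the three terms of $\mathcal{B}$ separately, using Hölder's inequality together with an explicit Sobolev-type inequality whose constants account for the stated factors.

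\textbf{Sobolev inequalities with explicit constants.} For $d\geq 3$ we use the Gagliardo--Nirenberg--Sobolev inequality in the form $\|f\|_{L^{1}}\le \|\nabla f\|_{L^1}$, valid for the product-of-integrals proof with constant $1$; a cruder but explicit version suffices here. Apply it to $|f|^{\gamma}$ with $\gamma=\frac{2(d-1)}{d-2}$. Then Hölder gives
\[
\|f\|_{L^{2^*}(U)}\le \tfrac{2(d-1)}{d-2}\|\nabla f\|_{L^2(U)},\qquad 2^*=\tfrac{2d}{d-2}.
\]
This is the source of the factor $\frac{2(d-1)}{d-2}$.

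For $d=2$ we take $\gamma$ adapted to the target exponent. Write $r=\frac{2q}{q-2}$ for the terms involving $h$, and $r=\frac{2s}{s-1}$ for the terms involving $c$. The same $L^1$ Sobolev argument, combined with Hölder on the bounded set $U$, yields
\[
\|f\|_{L^{r}}\le C_r|U|^{\alpha}\|\nabla f\|_{L^2}.
\]
The constants $C_r$ and exponents $\alpha$ are chosen to match $\frac{q}{q-2}|U|^{\frac12-\frac1q}$ and $\left(\frac{s}{s-1}\right)^2|U|^{1-\frac1s}$. The bookkeeping of exponents is routine but fiddly.

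\textbf{Proof of (i).} For the principal part, $|\langle A\nabla f,\nabla g\rangle|\le dM\|\nabla f\|\|\nabla g\|$, since the operator norm of $A$ is at most $dM$. For the drift term, estimate
\[
\Big|\int\langle \mathbf H,\nabla f\rangle g\Big|\le \|h\|_{L^q}\|\nabla f\|_{L^2}\|g\|_{L^{2q/(q-2)}},
\]
which equals $\|h\|_{L^d}\|\nabla f\|_{L^2}\|g\|_{L^{2^*}}$ when $d\ge3$, and then apply the Sobolev bound to $g$. For the zero-order term, estimate
\[
\int (c+\theta)|fg|\le \|c+\theta\|_{L^s}\|f\|_{L^{2s'}}\|g\|_{L^{2s'}},
\]
where $2s'=2^*$ when $s=d/2$, and apply the Sobolev bound to both $f$ and $g$; this accounts for the squared constant. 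Summing the three bounds gives $K$.

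\textbf{Proof of (ii).} Ellipticity gives $\mathcal B(f,f)\ge \lambda\|\nabla f\|^2+\int\langle\mathbf H,\nabla f\rangle f$, since $c+\theta\ge0$. Split $h=h1_{\{h\ge N\}}+h1_{\{h<N\}}$.
\begin{itemize}
\item The large part is bounded, as in (i), by $\big(\int 1_{\{h\ge N\}}h^q\big)^{1/q}\cdot C_{\mathrm{Sob}}\|\nabla f\|^2$. The hypothesis on $N$ is designed to make this at most $\frac{\lambda}{4}\|\nabla f\|^2$; for $d\ge3$, $\frac{\lambda}{4}\frac{d-2}{d-1}\cdot\frac{2(d-1)}{d-2}=\frac{\lambda}{2}$, so it is worth checking that the constants indeed close.
\item The small part is at most $N\|\nabla f\|\|f\|\le\frac{\lambda}{4}\|\nabla f\|^2+\frac{N^2}{\lambda}\|f\|^2$ by Young's inequality.
\end{itemize}
Combining the two parts gives $\mathcal B(f,f)+\frac{N^2}{\lambda}\|f\|^2\ge\frac{\lambda}{2}\|\nabla f\|^2$.

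\textbf{Main obstacle.} The main difficulty is matching the precise constants, especially in the $d=2$ case, where one must choose the power $\gamma$ and the Hölder exponents so that exactly $\frac{q}{q-2}$ and $|U|^{\frac12-\frac1q}$ appear. I would derive a one-parameter inequality $\|f\|_{L^{r}}\le \frac{r}{2}\,|U|^{1/r}\|\nabla f\|_{L^2}$ by applying the $L^1$ inequality to $|f|^{r/2}$ together with Hölder on $U$, and then specialize $r$. If the paper's constant in (ii) for $d\ge3$ is not reproduced exactly, the slack factor must be absorbed differently in the split.
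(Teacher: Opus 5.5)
Part (i) is fine and is essentially the paper's argument. Your inequality $\|f\|_{L^r(U)}\le\frac r2|U|^{1/r}\|\nabla f\|_{L^2(U)}$ is exactly what the paper gets from Evans' Sobolev inequality with exponent $\frac{2r}{r+2}$ followed by H\"older on $\nabla f$. Specializing to $r=\frac{2q}{q-2}$ and $r=\frac{2s}{s-1}$ gives precisely the constants in $K$. One slip: the $L^1$ inequality you start from must be $\|v\|_{L^{d/(d-1)}}\le\|\nabla v\|_{L^1}$. As you wrote it, $\|v\|_{L^1}\le\|\nabla v\|_{L^1}$ fails by scaling.

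The genuine gap is in (ii), and your own arithmetic exposes it. Bounding the large part directly gives a bound linear in $a:=\|1_{\{h\ge N\}}h\|_{L^q(U)}\,C_{\mathrm{Sob}}$. The hypothesis on $N$ only yields $a\le\frac\lambda2$, not $\frac\lambda4$, and this holds in both cases $d=2$ and $d\ge3$. Young's inequality on the small part costs another $\frac\lambda4\|\nabla f\|^2_{L^2(U)}$. So the drift term costs $\frac{3\lambda}{4}\|\nabla f\|^2_{L^2(U)}+\frac{N^2}{\lambda}\|f\|^2_{L^2(U)}$, and you only obtain $\mathcal B(f,f)+\frac{N^2}{\lambda}\|f\|_{L^2(U)}^2\ge\frac\lambda4\|\nabla f\|_{L^2(U)}^2$.

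Re-weighting Young's inequality on $\{h<N\}$ cannot rescue this. The large part already uses all of the $\frac\lambda2$ of slack, so any positive gradient cost from the small part breaks the claim. Driving that cost to zero sends the coefficient of $\|f\|^2$ to infinity. So your sentence ``Combining the two parts gives \dots'' is false as written.

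The missing idea is the order of operations: apply Young's inequality pointwise to the whole drift term first, and only then split $h^2$ rather than $h$. The pointwise bound is $h|f|\,\|\nabla f\|\le\frac\lambda4\|\nabla f\|^2+\frac1\lambda h^2f^2$. Splitting $h^2$ then gives, with $q=d$ when $d\ge3$,
\[
\int_U h^2f^2\,dx\le\Big(\int_U 1_{\{h\ge N\}}h^q\,dx\Big)^{\frac2q}\|f\|^2_{L^{\frac{2q}{q-2}}(U)}+N^2\|f\|^2_{L^2(U)}\le\frac{\lambda^2}{4}\|\nabla f\|^2_{L^2(U)}+N^2\|f\|^2_{L^2(U)}.
\]
Now the large part enters quadratically, as $a^2/\lambda\le\lambda/4$. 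This is exactly what the hypothesis is calibrated for, since it is stated for the \emph{square} $\big(\int_U 1_{\{|h|\ge N\}}|h|^q\,dx\big)^{2/q}$. Dividing by $\lambda$ and adding the first Young term gives a total gradient cost of exactly $\frac\lambda2$, with coefficient $\frac{N^2}{\lambda}$ on $\|f\|^2_{L^2(U)}$. This is the paper's argument. The gain over your split is that $a$ is replaced by $a^2/\lambda\le a/2$ when $a\le\lambda/2$.
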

\begin{proof}
(i) \underline{\sf Case 1}) Assume that $d=2$. Let $f,g \in H^{1,2}_0(U)$.
First,  by Cauchy-Schwarz inequality,
\[
\left| \int_{U} \langle A \nabla f, \nabla g \rangle dx \right| \leq \int_{U} dM \| \nabla f \|  \| \nabla g \| dx  \leq d M \| \nabla f \|_{L^2(U)} \| \nabla g \|_{L^2(U)}.
\]
Next, by applying Hölder's inequality, we obtain
\[
\left | \int_{U} \langle \mathbf{H}, \nabla f \rangle g dx \right| \leq \int_{U} |h|\,  \| \nabla f \|\, |g| dx \leq \|h\|_{L^q(U)} \| \nabla f \|_{L^2(U)} \| g \|_{L^{\frac{2q}{q-2}}(U)}.
\]
By Sobolev's inequality in \cite[Section 5.6, Theorem 1]{E10},
\begin{equation} \label{sobolconsta}
\| g \|_{L^{\frac{2q}{q-2}}(U)} \leq \left(  \frac{\frac{q}{q-1}}{2-\frac{q}{q-1}}  \right)
 \|\nabla g \|_{L^\frac{q}{q-1}(U)} \leq \frac{q}{q-2}  |U|^{\frac{1}{2}-\frac{1}{q}}
 \|\nabla g \|_{L^2(U)}.
\end{equation}
Similarly, using H\"{o}lder's inequality,
\[
\left |\int_{U}(c+\theta)  f g\, dx \right| \leq  \int_{U} |c+\theta|  |f| |g|\, dx \leq \|c+\theta \|_{L^{s}(U)} \|f\|_{L^{\frac{2s}{s-1}}(U)} \|g\|_{L^{\frac{2s}{s-1}}(U)}.
\]
Again by Sobolev's inequality in \cite[Section 5.6, Theorem 1]{E10} and H\"{o}lder's inequality,
$$ 
\|f\|_{L^{\frac{2s}{s-1}}(U)} \leq \left( \frac{\frac{2s}{2s-1}}{2-\frac{2s}{2s-1}}  \right)  \| \nabla f\|_{L^{\frac{2s}{2s-1}}(U)} \leq \left( \frac{s}{s-1} \right) |U|^{\frac12-\frac{1}{2s}} \| \nabla f\|_{L^2(U)}.
$$
Therefore, from the above, the estimate in (i) is verified when $d=2$. \\
\underline{\sf Case 2}) Assume that $d \geq 3$. Let $f,g \in H^{1,2}_0(U)$. 
Applying Sobolev's inequality in \cite[Section 5.6, Theorem 1]{E10} (cf. \cite[Theorem 4.8]{EG15}),
\begin{align} \label{imptsobolmud}
\|f\|_{L^{\frac{2d}{d-2}}(U)} \leq  \frac{2(d-1)}{d-2}	 \| \nabla f \|_{L^2(U)}.
\end{align}
Then, using Cauchy–Schwarz inequality and H\"{o}lder's inequality and \eqref{imptsobolmud},
\begin{align*}
\left | \int_{U} \langle \mathbf{H}, \nabla f \rangle g dx \right| &\leq \|h\|_{L^d(U)} \| \nabla f \|_{L^2(U)} \| g \|_{L^{\frac{2d}{d-2}}(U)}  \leq \frac{2(d-1)}{d-2}	  \|h\|_{L^d(U)}\| \nabla f \|_{L^2(U)}\| \nabla g \|_{L^2(U)},
\end{align*}
and
\begin{align*}
\left |\int_{U}(c+\theta)  f g\, dx \right| &\leq \|c+\theta \|_{L^{\frac{d}{2}}(U)} \|f\|_{L^{\frac{2d}{d-2}}(U)} \|g\|_{L^{\frac{2d}{d-2}}(U)} \\
& \leq  \|c+\theta \|_{L^{\frac{d}{2}}(U)} \left(  \frac{2(d-1)}{d-2}	\right)^2 \| \nabla f \|_{L^2(U)}\| \nabla g \|_{L^2(U)}.
\end{align*}
This completes the proof for the case $d \geq 3$. \\
\centerline{}
(ii) \underline{\sf Case 1}) Assume that $d=2$. Let $f \in H^{1,2}_0(U)$. Then, using the Cauchy–Schwarz inequality, Young's inequality, H\"{o}lder's inequality and Sobolev's inequality,
\begin{align} \label{driftestimd2}
\left | \int_{U} \langle \mathbf{H}, \nabla f \rangle f\,dx \right| &\leq \int_{U} \|\mathbf{H} \| \,|f|\, \| \nabla f \|\,dx \leq \frac{\lambda}{4} \int_{U} \| \nabla f \|^2 dx + \frac{1}{\lambda} \int_{U} h^2 |f|^2 dx.
\end{align}
Define a function $\phi_1: [0, \infty) \rightarrow \mathbb{R}$ given by
$$
\phi_1(s):= \left( \int_{U} 1_{\{  |h| \geq s \}} |h|^q dx \right)^{\frac{2}{q}}, \quad s \in [0, \infty).
$$
Since $h \in L^q(U)$ and $h$ is almost everywhere finite, it follows from Lebesgue's theorem that $\lim_{s \rightarrow \infty}\phi_1(s)=0$. Now choose a constant $N \geq 0$ satisfying 
\begin{equation} \label{imptindex}
\phi_1(N) \leq \frac{\lambda^2}{4} \left(\frac{q-2}{q} \right)^2  |U|^{-1+\frac{2}{q}}.
\end{equation}
Then, using \eqref{sobolconsta} and \eqref{imptindex},
\begin{align*}
 \int_{U} h^2 |f|^2 dx &= \int_{U} 1_{\{ |h| \geq N\}}h^2 |f|^2 dx +  \int_{U} 1_{\{ |h| < N\}}h^2 |f|^2 dx \\
&\leq \left(\int_{U} 1_{\{  |h| \geq N \}} |h|^q dx \right)^{\frac{2}{q}} \|f\|_{L^{\frac{2q}{q-2}}(U)}^2 +N^2 \int_{U} |f|^2 dx \\
& \leq \frac{\lambda^2}{4} \|\nabla f \|^2_{L^2(U)}+N^2 \int_{U} |f|^2 dx.
\end{align*}
Therefore, \eqref{driftestimd2} yields 
\begin{align*}
 \int_{U} \langle \mathbf{H}, \nabla f \rangle f\,dx  \geq -\frac{\lambda}{2} \int_{U} \| \nabla f \|^2 dx -\frac{N^2}{\lambda} \int_{U} |f|^2 dx.
\end{align*}
Therefore, combining the above estimates, we obtain 
\[
\mathcal{B}(f,f)  \geq \frac{\lambda}{2} \|\nabla f \|^2_{L^2(U)} -\frac{N^2}{\lambda} \|f\|_{L^2(U)}^2,
\]
as required. \\
 \underline{\sf Case 2}) 
Assume that $d \geq 3$. Let $f \in H^{1,2}_0(U)$. Analogously to \textsf{Case 1}, we obtain \eqref{driftestimd2}.
Define a function $\phi_2: [0, \infty) \rightarrow \mathbb{R}$ given by
$$
\phi_2(s):= \left( \int_{U} 1_{\{  |h| \geq s \}} |h|^d dx \right)^{\frac{2}{d}}, \quad s \in [0, \infty).
$$
Since $h \in L^d(U)$ and $h$ is almost everywhere finite, it follows from Lebesgue's theorem that $\lim_{s \rightarrow \infty}\phi_2(s)=0$. Now choose $N \geq 0$ satisfying 
\begin{equation} \label{imptindex2}
\phi_2(N) \leq \frac{\lambda^2}{16} \left(\frac{d-2}{d-1} \right)^2 .
\end{equation}
From \eqref{imptsobolmud} and \eqref{imptindex2}, we get

\begin{align*}
 \int_{U} h^2 |f|^2 dx &= \int_{U} 1_{\{ |h| \geq N\}}h^2 |f|^2 dx +  \int_{U} 1_{\{ |h| < N\}}h^2 |f|^2 dx \\
&\leq \left(\int_{U} 1_{\{  |h| \geq N \}} |h|^d dx \right)^{\frac{2}{d}} \|f\|_{L^{\frac{2d}{d-2}}(U)}^2 +N^2 \int_{U} |f|^2 dx \\
& \leq \frac{\lambda^2}{4} \|\nabla f \|^2_{L^2(U)}+N^2 \int_{U} |f|^2 dx.
\end{align*}
The rest is analogous to {\sf Case 1}.
\end{proof}
\centerline{}
\noindent
We now introduce the basic definition of a weak solution to the initial-boundary value problem \eqref{bvpara}, formulated as a variational identity involving test functions. Moreover, since the initial data is given in $L^2(U)$, the initial condition is interpreted in the sense of an $L^2$-limit.
\begin{defn} \label{startdefn}
Assume that {\bf (Hy)} holds. Let $T \in (0, \infty)$ and let $u$ be a function defined on $U \times (0,T)$ such that $u(\cdot,t) \in H^{1,2}_0(U)$ for a.e. $t \in (0,T)$, and define
%$$
%\mathbf{u} : (0,T) \to H_0^{1,2}(U)
%$$
%by
\begin{equation} \label{cylderfun}
\mathbf{u}(t) := u(\cdot,t) \quad \text{for a.e. } t \in (0,T).
\end{equation}
Let $g \in L^2(U)$. We say that $\mathbf{u}$
is a weak solution to \eqref{bvpara},
if the following hold (cf. Theorem \ref{equiproweakso}):
\begin{itemize}
    \item[(i)]
    \[
	\mathbf{u} \in L^2(0,T; H_0^{1,2}(U)) \quad \text{and} \quad \mathbf{u}' \in L^2(0,T; H^{-1,2}(U))
     \]
    \item[(ii)] 
    \begin{equation} \label{weaksolmain}
    \langle \mathbf{u}'(t), v \rangle_{H^{-1,2}(U)} + \int_{U} \langle A \nabla \mathbf{u}(t), \nabla v \rangle  +\langle \mathbf{H}, \nabla \mathbf{u}(t) \rangle v+(c+\theta)\mathbf{u}(t)v \,dx = 0 \quad \text{for all } v \in H_0^{1,2}(U) \text{ and a.e. } t \in (0,T)
    \end{equation}
    \item[(iii)] 
    \[
	\lim_{t \rightarrow 0+} \mathbf{u}(t)=g \quad \text{ in $L^2(U)$}.
    \]
\end{itemize}
\end{defn}

\begin{rem} \label{imptremak3}
Note that a weak solution $\mathbf{u}$ in Definition \ref{startdefn} is well-defined by Proposition \ref{stamenest}. As a direct consequence of regularity results for Sobolev space-valued functions (see \cite[Section 5.9, Theorem 3(i)]{E10}), if $\mathbf{u} \in L^2(0,T; H_0^{1,2}(U))$ with $\mathbf{u}' \in L^2(0,T; H^{-1,2}(U))$, then $\mathbf{u}$ has a version in $C([0,T]; L^2(U))$, say again $\mathbf{u} \in C([0,T]; L^2(U)) \cap L^2(0,T; H^{1,2}_0(U))$. Therefore, if $u$ is a weak solution to \eqref{bvpara}, then the condition in Definition \ref{startdefn}(iii) is replaced as
$$
\mathbf{u}(0)=\lim_{t \rightarrow 0+} \mathbf{u}(t)=g \quad \text{ in $L^2(U)$}.
$$
Moreover, by \cite[Section 5.9, Theorem 3(ii)]{E10},
the map
$$
t \mapsto \| \mathbf{u}(t)\|^2_{L^2(U)}\;\; \text{ is absolutely continuous on $[0, T]$}
$$
and satisfies
\begin{equation} \label{timederih12}
\frac{d}{dt} \| \mathbf{u}(t)\|^2_{L^2(U)} =2 \langle \mathbf{u}'(t),  \mathbf{u}(t)  \rangle_{H^{-1,2}(U)} \quad \text{ for a.e. $t \in (0,T)$}.
\end{equation}
\end{rem}
\centerline{}
\noindent
In the following result, we present an equivalent formulation of weak solutions, in which the condition $\mathbf{u}' \in L^2(0,T; H^{-1,2}(U))$ is not explicitly assumed in (iii).
\begin{theo} \label{equiproweakso}
Assume that {\bf (Hy)} holds. Let $u$ be a function defined on $U \times (0,T)$ such that $u(\cdot, t) \in H^{1,2}_0(U)$ for a.e. $t \in (0,T)$.
Define $\mathbf{u}$ as in \eqref{cylderfun} and assume that $\mathbf{u} \in L^2(0,T; H^{1,2}_0(U))$. Then, the following are equivalent:
\begin{itemize}
\item[(i)]
$\mathbf{u}' \in L^2(0,T;H^{-1,2}(U))$ and \eqref{weaksolmain} holds
\item[(ii)] $\mathbf{u}' \in L^2(0,T;H^{-1,2}(U))$ and for all $\mathbf{v} \in L^2(0,T; H_0^{1,2}(U))$,
    \begin{equation} \label{intformv}
    \int_0^T \langle \mathbf{u}'(t), \mathbf{v}(t) \rangle_{H^{-1,2}(U)} \,dt 
    + \int_0^T \int_U \langle A \nabla \mathbf{u}(t), \nabla \mathbf{v}(t) \rangle 
    + \langle \mathbf{H}, \nabla \mathbf{u}(t) \rangle \mathbf{v}(t) 
    + (c+\theta) \mathbf{u}(t) \mathbf{v}(t) \,dx \,dt = 0.
    \end{equation}
\item[(iii)] For all $v \in H^{1,2}_0(U)$ and $\eta \in C_0^{\infty}((0,T))$,
\begin{equation} \label{iterativeint}
\int_0^T \int_{U} -\mathbf{u}(t) \partial_t (v \eta) + \langle A \nabla \mathbf{u}(t), \nabla (v \eta) \rangle 
+ \langle \mathbf{H}, \nabla \mathbf{u}(t) \rangle (v \eta) + (c+\theta) \mathbf{u}(t) (v \eta) \,dx \,dt = 0.
\end{equation}
\end{itemize}
\end{theo}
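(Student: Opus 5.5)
I would prove the cycle (i)$\Rightarrow$(ii)$\Rightarrow$(iii)$\Rightarrow$(i), with the last implication being the substantive one. Throughout, let $L(t)\in H^{-1,2}(U)$ denote the functional
$$
v \mapsto \int_U \langle A\nabla \mathbf{u}(t),\nabla v\rangle + \langle \mathbf{H},\nabla \mathbf{u}(t)\rangle v + (c+\theta)\mathbf{u}(t)v\,dx, \qquad v\in H^{1,2}_0(U).
$$
By Proposition~\ref{stamenest}(i), $\|L(t)\|_{H^{-1,2}(U)}\le K\|\nabla\mathbf{u}(t)\|_{L^2(U)}$. Since $\mathbf{u}$ is strongly measurable, so is $t\mapsto L(t)$. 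Hence $L\in L^2(0,T;H^{-1,2}(U))$.

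\textbf{(i)$\Rightarrow$(ii).} Let $\mathbf{v}\in L^2(0,T;H^{1,2}_0(U))$. Apply \eqref{weaksolmain} with $v=\mathbf{v}(t)$ for a.e.\ $t$ and integrate in time. To justify this pointwise substitution, first take simple functions $\mathbf{v}=\sum_k 1_{E_k}(t)v_k$, for which \eqref{intformv} follows by summing \eqref{weaksolmain} over $k$. Then pass to general $\mathbf{v}$ by density of simple functions in $L^2(0,T;H^{1,2}_0(U))$. The limit is legitimate because both terms in \eqref{intformv} are continuous in $\mathbf{v}$, being pairings of $L^2(0,T;H^{-1,2})$ with $L^2(0,T;H^{1,2}_0)$.

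\textbf{(ii)$\Rightarrow$(iii).} Take $\mathbf{v}(t)=\eta(t)v$. By the definition of the weak time derivative in $H^{-1,2}(U)$,
$$
\int_0^T\langle \mathbf{u}'(t),v\rangle\eta(t)\,dt=-\int_0^T\langle \mathbf{u}(t),v\rangle\eta'(t)\,dt=-\int_0^T\int_U \mathbf{u}(t)\,\partial_t(v\eta)\,dx\,dt,
$$
where the pairing of $\mathbf{u}(t)\in L^2\subset H^{-1,2}$ with $v$ is the $L^2$ inner product. Substituting this into \eqref{intformv} gives \eqref{iterativeint}.

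\textbf{(iii)$\Rightarrow$(i).} Rewrite \eqref{iterativeint} as
$$
\int_0^T (\mathbf{u}(t),v)_{L^2}\,\eta'(t)\,dt=\int_0^T\langle L(t),v\rangle\eta(t)\,dt \quad\text{for all } v\in H^{1,2}_0(U),\ \eta\in C_0^\infty((0,T)).
$$
Viewing $\mathbf{u}$ as an $H^{-1,2}(U)$-valued function through $L^2(U)\hookrightarrow H^{-1,2}(U)$, this says
$$
\Big\langle \int_0^T \mathbf{u}\,\eta'\,dt,\,v\Big\rangle=\Big\langle \int_0^T L\,\eta\,dt,\,v\Big\rangle \quad\text{for all } v.
$$
Since $v$ is arbitrary, the Bochner integrals agree in $H^{-1,2}(U)$, so $\int_0^T \mathbf{u}\eta'\,dt=\int_0^T L\eta\,dt$. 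This means the weak derivative is $\mathbf{u}'=-L\in L^2(0,T;H^{-1,2}(U))$.

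It remains to recover \eqref{weaksolmain} from $\mathbf{u}'=-L$. We have $\langle \mathbf{u}'(t)+L(t),v\rangle=0$ for a.e.\ $t$, but the null set may depend on $v$. I would remove this dependence using separability of $H^{1,2}_0(U)$: choose a countable dense set $\{v_n\}$, take the union of the corresponding null sets, and extend to all $v$ by continuity of $v\mapsto\langle \mathbf{u}'(t)+L(t),v\rangle$.

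\textbf{Main obstacle.} The only delicate points are the correct handling of the embedding $L^2\hookrightarrow H^{-1,2}$ together with the Bochner-integral identity in the implication (iii)$\Rightarrow$(i), and the measurability of $L$. Both follow from Proposition~\ref{stamenest}(i) and Pettis's theorem.
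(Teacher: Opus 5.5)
Your proposal is correct and follows the paper's proof essentially step for step. It uses the same cycle of implications, the same test function $\mathbf{v}(t)=\eta(t)v$ for (ii)$\Rightarrow$(iii), and the same identification $\mathbf{u}'=-L$ via Bochner integrals and Proposition~\ref{stamenest}(i) for (iii)$\Rightarrow$(i); for (i)$\Rightarrow$(ii) you use simple functions as the dense class where the paper uses $C([0,T];H^{1,2}_0(U))$. Your separability step at the end is harmless but unnecessary, because $\mathbf{u}'(t)=-L(t)$ in $H^{-1,2}(U)$ for a.e.\ $t$ already yields a null set independent of $v$.
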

\begin{proof}
(i) $\Rightarrow$ (ii): Let $\tilde{\mathbf{v}} \in C([0,T]; H_0^{1,2}(U))$.
Then, by integrating $\tilde{\mathbf{v}}$ over $[0,T]$ with respect to $dt$ in (i), we obtain \eqref{intformv} with $\mathbf{v}$ replaced by $\tilde{\mathbf{v}}$. By applying Young's inequality, Proposition \ref{stamenest}, and the density of $C([0,T]; H^{1,2}_0(U))$ in $L^2(0,T; H^{1,2}_0(U))$, the assertion follows.
\\ 
\centerline{}
(ii) $\Rightarrow$ (iii): Let $v \in H^{1,2}_0(U)$ and $\eta \in C_0^{\infty}((0,T))$.
Define $\mathbf{v} : [0,T] \to H^{1,2}_0(U)$ by $\mathbf{v}(t)(x) := v(x)\eta(t)$ for $x \in U$ and $t \in [0,T]$. Then,
\begin{align*}
\int_0^T \langle \mathbf{u}'(t), \mathbf{v}(t) \rangle_{H^{-1,2}(U)} \,dt 
&= \left\langle \int_0^T \mathbf{u}'(t) \eta(t) \,dt, v \right\rangle_{H^{-1,2}(U)} \\
&= \left\langle -\int_0^T \mathbf{u}(t) \eta'(t) \,dt, v \right\rangle_{H^{-1,2}(U)} \\
&= -\int_0^T \langle \mathbf{u}(t), v \eta'(t) \rangle_{H^{-1,2}(U)} \,dt \\
&=\int_0^T \int_{U} -\mathbf{u}(t) \partial_t (v \eta) dx dt.
\end{align*}
Hence, (ii) implies \eqref{iterativeint}. \\
\centerline{}
(iii) $\Rightarrow$ (i):   For a.e. $t\in(0,T)$, define $\tilde{\mathbf{w}}(t) \in H^{-1,2}(U)$ given by
\begin{equation} \label{weaktimedri}
\langle \tilde{\mathbf{w}}(t), \varphi \rangle_{H^{-1,2}(U)} =-\int_{U}\langle A \nabla \mathbf{u}(t), \nabla \varphi \rangle 
+ \langle \mathbf{H}, \nabla \mathbf{u}(t) \rangle \varphi + (c+\theta) \mathbf{u}(t)  \varphi \,dx, \quad \varphi \in H^{1,2}_0(U).
\end{equation}
Then, by Proposition \ref{stamenest}(i), for a.e. $t \in (0, T)$
\begin{align*}
\left| \langle \tilde{\mathbf{w}}(t), \varphi \rangle_{H^{-1,2}(U)} \right| &\leq K \| \nabla \mathbf{u}(t) \|_{L^2(U)} \| \nabla \varphi \|_{L^2(U)} \quad \text{ for all } \varphi \in H^{1,2}_0(U),
\end{align*}
where $K > 0$ is the constant appearing in Proposition \ref{stamenest}(i).
Thus, 
$$
\|\tilde{\mathbf{w}}(t)\|_{H^{-1,2}(U)} \leq K \| \nabla \mathbf{u}(t) \|_{L^2(U)} \quad \text{ for a.e. $t \in (0,T)$},
$$
so that $\tilde{\mathbf{w}} \in L^2(0,T; H^{-1,2}(U))$, and satisfies
\begin{equation} \label{timederiest}
\int_0^T \| \tilde{\mathbf{w}}(t)\|^2_{H^{-1,2}(U)} dt \leq K^2 \|\mathbf{u}\|^2_{L^2(0,T; H^{1,2}_0(U))}. 
\end{equation}
 Let $v \in H^{1,2}_0(U)$ and $\eta \in C_0^{\infty}((0,T))$. Using the Bochner integration result in \cite[Appendices, Theorem 8]{E10}, we obtain:
\begin{align}
&\left \langle -\int_{0}^T \mathbf{u}(t)  \eta'(t) dt,   v \right \rangle_{H^{-1,2}(U)}=-\int_0^T \langle \mathbf{u}(t), v \eta'(t) \rangle_{H^{-1,2}(U)} \,dt=-\int_0^T \int_U \mathbf{u}(t) \, \partial_t (v \eta) \,dx \,dt  \nonumber \\
&= -\int_{0}^T \int_{U} \langle A \nabla \mathbf{u}(t), \nabla (v \eta) \rangle 
+ \langle \mathbf{H}, \nabla \mathbf{u}(t) \rangle (v \eta) + (c+\theta) \mathbf{u}(t) (v \eta) \,dx \,dt \nonumber \\
&= \int_0^T \left(- \int_{U}\langle A \nabla \mathbf{u}(t), \nabla v\rangle 
+ \langle \mathbf{H}, \nabla \mathbf{u}(t) \rangle v + (c+\theta) \mathbf{u}(t)  v\,dx \right)\, \eta(t) dt \nonumber \\
&= \int_0^T  \langle \tilde{\mathbf{w}}(t), v \rangle_{H^{-1,2}(U)} \eta(t)dt	 =  \left \langle  \int_0^T \tilde{\mathbf{w}}(t) \eta(t)\,dt, v  \right  \rangle_{H^{-1,2}(U)}. \label{soboldual}
\end{align}
Therefore, since $v$ and $\eta$ are arbitrarily chosen, $\mathbf{u}'=\tilde{\mathbf{w}} \in L^2(0,T; H^{-1,2}(U))$. Moreover, from \eqref{soboldual} we obtain that
\begin{align*}
-\int_{0}^T \int_U \mathbf{u}(t) \, \partial_t (v \eta) \,dx \,dt = \int_0^T \langle \mathbf{u}'(t), v \rangle_{H^{-1,2}(U)} \eta(t) \,dt.
\end{align*}
Similarly from \eqref{soboldual}, we have
\begin{align*}
&\int_0^T \int_{U} \langle A \nabla \mathbf{u}(t), \nabla (v \eta) \rangle 
+ \langle \mathbf{H}, \nabla \mathbf{u}(t) \rangle (v \eta) + (c+\theta) \mathbf{u}(t) (v \eta) \,dx \,dt \\
& \quad = \int_0^T \left( \int_U \langle A \nabla \mathbf{u}, \nabla v \rangle 
+ \langle \mathbf{H}, \nabla \mathbf{u} \rangle v + (c+\theta) \mathbf{u}(t) v \,dx \right) \eta(t) \,dt.
\end{align*}
Thus,
\[
\int_0^T \left( \langle \mathbf{u}'(t), v \rangle_{H^{-1,2}(U)} 
+ \int_U \langle A \nabla \mathbf{u}(t), \nabla v \rangle 
+ \langle \mathbf{H}, \nabla \mathbf{u}(t) \rangle v 
+ (c+\theta) \mathbf{u}(t) v \,dx \right) \eta(t) \,dt = 0.
\]
Since $\eta \in C_0^{\infty}((0,T))$ is arbitrary, we conclude that (i) holds.
\end{proof}

\begin{cor} \label{cor:timemaxcontrol}
Under the assumptions of {\bf (Hy)}, assume that $\mathbf{u}$ is a weak solution to \eqref{bvpara}. Then the following  estimates hold:
\begin{equation} \label{interestim}
\|\mathbf{u}' \|^2_{L^2(0,T; H^{-1,2}(U))} \leq K^2\|\mathbf{u}\|^2_{L^2(0,T; H^{1,2}_0(U))},
\end{equation}
\begin{equation} \label{eq:timemax}
\max_{t \in [0,T]}\|\mathbf{u}(t)\|_{L^2(U)}^2  \leq   \left( \frac{4(d-1)^2}{d^2}|U|^{\frac2d}T^{-1}+K^2+1 \right)\|\mathbf{u}\|^2_{L^2(0,T; H^{1,2}_0(U))}.
\end{equation}
where $K > 0$ is the constant appearing in Proposition \ref{stamenest}(i).
\end{cor}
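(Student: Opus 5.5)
The first estimate \eqref{interestim} is essentially already contained in the proof of Theorem~\ref{equiproweakso}. Since $\mathbf{u}$ is a weak solution, Theorem~\ref{equiproweakso} (i)$\Rightarrow$(iii)$\Rightarrow$(i) gives $\mathbf{u}'=\tilde{\mathbf{w}}$, with $\tilde{\mathbf{w}}$ defined by \eqref{weaktimedri}. Equivalently, \eqref{weaksolmain} itself shows that for a.e. $t$ the functional $\mathbf{u}'(t)$ acts on $v$ as $-\mathcal{B}(\mathbf{u}(t),v)$. Proposition~\ref{stamenest}(i) then yields
\[
\|\mathbf{u}'(t)\|_{H^{-1,2}(U)}\le K\|\nabla\mathbf{u}(t)\|_{L^2(U)} \quad \text{for a.e. } t.
\]
Squaring and integrating over $(0,T)$ gives \eqref{interestim}, exactly as in \eqref{timederiest}.

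For \eqref{eq:timemax}, I will use Remark~\ref{imptremak3}: $t\mapsto\|\mathbf{u}(t)\|^2_{L^2(U)}$ is absolutely continuous on $[0,T]$ with derivative $2\langle\mathbf{u}'(t),\mathbf{u}(t)\rangle_{H^{-1,2}(U)}$. Fix $t\in[0,T]$ and any $\tau\in[0,T]$. Then
\[
\|\mathbf{u}(t)\|^2_{L^2(U)}=\|\mathbf{u}(\tau)\|^2_{L^2(U)}+2\int_\tau^t\langle\mathbf{u}'(r),\mathbf{u}(r)\rangle\,dr
\le \|\mathbf{u}(\tau)\|^2_{L^2(U)}+\int_0^T\|\mathbf{u}'(r)\|^2_{H^{-1,2}}+\|\nabla\mathbf{u}(r)\|^2_{L^2}\,dr,
\]
using $2ab\le a^2+b^2$ and the norm convention $\|\cdot\|_{H^{1,2}_0(U)}=\|\nabla\cdot\|_{L^2(U)}$. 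By \eqref{interestim}, the last integral is at most $(K^2+1)\|\mathbf{u}\|^2_{L^2(0,T;H^{1,2}_0(U))}$.

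Next I average this inequality in $\tau$ over $[0,T]$. This replaces $\|\mathbf{u}(\tau)\|^2_{L^2(U)}$ by $T^{-1}\int_0^T\|\mathbf{u}(\tau)\|^2_{L^2(U)}d\tau$. To bound this term I need a Poincaré inequality with the constant $\frac{2(d-1)}{d}|U|^{1/d}$. I obtain it from the Gagliardo–Nirenberg–Sobolev inequality $\|f\|_{L^{d/(d-1)}(U)}\le\frac{d-1}{d}\cdots$, or more directly by applying the $L^1$-Sobolev inequality (constant $1$ in Evans's normalization) to $|f|^{2(d-1)/d}$ for $f\in H^{1,2}_0(U)$. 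Combining this with Hölder's inequality and the volume factor $|U|^{1/d}$ should give $\|f\|_{L^2(U)}\le\frac{2(d-1)}{d}|U|^{1/d}\|\nabla f\|_{L^2(U)}$. With this, the averaged term is at most $\frac{4(d-1)^2}{d^2}|U|^{2/d}T^{-1}\|\mathbf{u}\|^2_{L^2(0,T;H^{1,2}_0(U))}$. Taking the maximum over $t$ (the maximum exists since $\mathbf{u}\in C([0,T];L^2(U))$) gives \eqref{eq:timemax}.

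The main obstacle is this Poincaré inequality with the explicit constant. I will first try the argument just described, namely the Sobolev inequality applied to a power of $|f|$. If the constant does not come out exactly, it can be matched by tracking the constants in the proof of \cite[Section 5.6, Theorem 1]{E10}, as the paper does in \eqref{imptsobolmud}. This choice of constant is consistent with the $\kappa$ appearing in Theorem~\ref{maintheore}.
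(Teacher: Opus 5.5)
Your proposal is correct and follows essentially the paper's proof. The first estimate comes from Proposition~\ref{stamenest}(i) applied to the identification $\langle\mathbf{u}'(t),v\rangle_{H^{-1,2}(U)}=-\mathcal{B}(\mathbf{u}(t),v)$. The second comes from the energy identity, Young's inequality, averaging over the initial time $\tau\in[0,T]$, and the Poincar\'e inequality $\|f\|_{L^2(U)}\le\frac{2(d-1)}{d}|U|^{1/d}\|\nabla f\|_{L^2(U)}$.

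Your hedged step does go through with exactly this constant. Applying the $L^1$-Sobolev inequality to $|f|^{\gamma}$ with $\gamma=\frac{2(d-1)}{d}$ is precisely Evans's proof of the Gagliardo--Nirenberg--Sobolev inequality for $p=\frac{2d}{d+2}$, where $p^*=2$. By H\"{o}lder, $\int_U|f|^{\gamma-1}\|\nabla f\|\,dx\le|U|^{1/d}\|f\|_{L^2(U)}^{(d-2)/d}\|\nabla f\|_{L^2(U)}$, which yields $\|f\|_{L^2(U)}\le\gamma|U|^{1/d}\|\nabla f\|_{L^2(U)}$, the same constant the paper obtains.
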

\begin{proof}
From Proposition \ref{stamenest}(i) and \eqref{timederiest}, we have \eqref{interestim}.
Observe that applying Sobolev's inequality in \cite[Section 5.6, Theorem 1]{E10} and the H\"{o}lder inequality, we obtain that
\begin{align}
\|f\|_{L^2(U)} \leq \frac{\frac{2d}{d+2} (d-1)}{d-\frac{2d}{d+2}} \| \nabla f \|_{L^{\frac{2d}{d+2}}(U)} \leq \frac{2(d-1)}{d} |U|^{\frac{1}{d}} \| \nabla f\|_{L^2(U)} \quad \text{ for all $f \in H^{1,2}_0(U)$.} \label{sobolhold}
\end{align}
By \cite[Section 5.9 (12)]{E10}, the following energy identity holds for all $s,t \in [0,T]$:
\[
\|\mathbf{u}(t)\|_{L^2(U)}^2 = \|\mathbf{u}(s)\|_{L^2(U)}^2 + 2 \int_s^t \langle \mathbf{u}'(\tau), \mathbf{u}(\tau) \rangle_{H^{-1,2}(U)} \, d\tau.
\]
Applying Young's inequality and \eqref{interestim}, we have for any $s,t \in [0,T]$,
\begin{align*}
\|\mathbf{u}(t)\|_{L^2(U)}^2 
&\leq \|\mathbf{u}(s)\|_{L^2(U)}^2 
+ \int_0^T \|\mathbf{u}'(\tau)\|^2_{H^{-1,2}(U)} \, d\tau
+ \int_0^T \|\mathbf{u}(\tau)\|^2_{H_0^{1,2}(U)} \, d\tau \\
&= \|\mathbf{u}(s)\|_{L^2(U)}^2 
+ (K^2 + 1) \|\mathbf{u}\|^2_{L^2(0,T; H^{1,2}_0(U))}.
\end{align*}
Integrating this inequality with respect to $s$ over $[0,T]$ and using \eqref{sobolhold}, we get
\begin{align*}
T \|\mathbf{u}(t)\|_{L^2(U)}^2 
&\leq \int_0^T \|\mathbf{u}(s)\|_{L^2(U)}^2 \, ds 
+ (K^2 + 1) T \|\mathbf{u}\|^2_{L^2(0,T; H^{1,2}_0(U))} \\
&\leq \left(\frac{4(d-1)^2}{d^2}|U|^{\frac2d}+	TK^2+T\right) \|\mathbf{u}\|^2_{L^2(0,T; H^{1,2}_0(U))},
\end{align*}
and hence \eqref{eq:timemax} follows.
\end{proof}

\begin{theo}[Uniqueness] \label{theouniquene}
Assume that {\bf (Hy)} holds. Let $\mathbf{u}$ and $\mathbf{v}$ be weak solutions to \eqref{bvpara}. Then $\mathbf{u} = \mathbf{v}$ in $L^2(0,T; H^{1,2}_0(U))$.
\end{theo}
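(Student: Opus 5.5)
Set $\mathbf{w} := \mathbf{u} - \mathbf{v}$. Since the problem is linear, $\mathbf{w}$ is a weak solution to \eqref{bvpara} with initial data $g \equiv 0$. It satisfies $\mathbf{w} \in L^2(0,T;H^{1,2}_0(U))$, $\mathbf{w}' \in L^2(0,T;H^{-1,2}(U))$, \eqref{weaksolmain}, and, by Remark~\ref{imptremak3}, $\mathbf{w} \in C([0,T];L^2(U))$ with $\mathbf{w}(0)=0$. The plan is a standard Gronwall energy argument built on the G\aa rding-type inequality of Proposition~\ref{stamenest}(ii).

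First, for a.e. $t$, I test \eqref{weaksolmain} with $v = \mathbf{w}(t) \in H^{1,2}_0(U)$ and combine with \eqref{timederih12}. This gives, for a.e. $t\in(0,T)$,
\[
\frac{d}{dt}\|\mathbf{w}(t)\|^2_{L^2(U)} = 2\langle \mathbf{w}'(t),\mathbf{w}(t)\rangle_{H^{-1,2}(U)} = -2\mathcal{B}(\mathbf{w}(t),\mathbf{w}(t)).
\]
Next, I choose $N\geq 0$ as in Proposition~\ref{stamenest}(ii); such an $N$ exists because the functions $\phi_1,\phi_2$ in that proof tend to $0$. Then
\[
-2\mathcal{B}(\mathbf{w}(t),\mathbf{w}(t)) \leq -\lambda\|\nabla \mathbf{w}(t)\|^2_{L^2(U)} + \frac{2N^2}{\lambda}\|\mathbf{w}(t)\|^2_{L^2(U)} \leq \frac{2N^2}{\lambda}\|\mathbf{w}(t)\|^2_{L^2(U)}.
\]
The map $t\mapsto\|\mathbf{w}(t)\|^2_{L^2(U)}$ is absolutely continuous and vanishes at $t=0$, so Gronwall's inequality gives $\|\mathbf{w}(t)\|^2_{L^2(U)} \leq e^{2N^2 t/\lambda}\cdot 0 = 0$ for all $t\in[0,T]$.

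It remains to conclude equality in $L^2(0,T;H^{1,2}_0(U))$, not just in $C([0,T];L^2(U))$. I integrate the differential inequality over $[0,T]$ while keeping the gradient term:
\[
\|\mathbf{w}(T)\|^2_{L^2(U)} + \lambda\int_0^T\|\nabla\mathbf{w}(t)\|^2_{L^2(U)}\,dt \leq \frac{2N^2}{\lambda}\int_0^T\|\mathbf{w}(t)\|^2_{L^2(U)}\,dt = 0.
\]
Hence $\|\mathbf{w}\|_{L^2(0,T;H^{1,2}_0(U))}=0$, i.e. $\mathbf{u}=\mathbf{v}$.

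The only delicate point is justifying the test with $v=\mathbf{w}(t)$ pointwise in $t$. This is fine because \eqref{weaksolmain} holds for all $v$ at a.e. $t$, with a null set independent of $v$. If one prefers to avoid that issue, one can instead use the integrated form \eqref{intformv} of Theorem~\ref{equiproweakso}(ii) with $\mathbf{v}=\mathbf{w}1_{[0,t]}$ and apply the energy identity \cite[Section 5.9 (12)]{E10}.
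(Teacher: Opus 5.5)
Your proposal is correct and follows essentially the same route as the paper: test \eqref{weaksolmain} with $\mathbf{w}(t)$, combine \eqref{timederih12} with the coercivity bound of Proposition~\ref{stamenest}(ii), and apply Gronwall with $\mathbf{w}(0)=0$. Your extra step of keeping the gradient term is valid but not needed: $\mathbf{w}(t)=0$ in $L^2(U)$ for a.e.\ $t$ already gives $\mathbf{w}=0$ in $L^2(0,T;H^{1,2}_0(U))$, and this is how the paper concludes.
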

\begin{proof}
As in Remark \ref{imptremak3}, let $\mathbf{w} := \mathbf{u} - \mathbf{v} \in C([0,T]; L^2(U)) \cap L^2(0,T; H^{1,2}_0(U))$. Then $\mathbf{w}$ is also a weak solution to \eqref{bvpara} due to the linearity of the equation. For a.e.\ $t \in [0,T]$, the weak formulation yields
\begin{equation} \label{intformv2}
\langle \mathbf{w}'(t), \mathbf{w}(t) \rangle_{H^{-1,2}(U), H^{1,2}_0(U)} 
+ \int_U \langle A \nabla \mathbf{w}(t), \nabla \mathbf{w}(t) \rangle 
+ \langle \mathbf{H}, \nabla \mathbf{w}(t) \rangle \mathbf{w}(t) 
+ (c+\theta)\, \mathbf{w}(t)^2  dx = 0.
\end{equation}
Applying \eqref{timederih12} and Proposition \ref{stamenest}(ii) to \eqref{intformv2}, we obtain the estimate
\begin{equation} \label{eq:energyineq}
\frac{1}{2} \frac{d}{dt} \left( \| \mathbf{w}(t) \|_{L^2(U)}^2 \right)
-\frac{N^2}{\lambda} \| \mathbf{w}(t) \|_{L^2(U)}^2 \leq 0
\quad \text{for a.e.\ } t \in [0,T],
\end{equation}
where $N \geq 0$ is the constant appearing in Proposition \ref{stamenest}(ii).
Define $\Phi \in C([0,T])$ given by
\[
\Phi(t) := \| \mathbf{w}(t) \|_{L^2(U)}^2, \quad t \in [0,T],
\]
and let $\gamma := \frac{N^2}{\lambda}$. Then \eqref{eq:energyineq} implies
\[
\frac{d}{dt} \left( e^{-2\gamma t} \Phi(t) \right) \leq 0
\quad \text{for a.e.\ } t \in [0,T].
\]
Integrating from $0$ to $t$, we get
\begin{equation} \label{eq:phidecay}
e^{-2\gamma t} \Phi(t) \leq \Phi(0).
\end{equation}
Since $\Phi(0) = \| \mathbf{w}(0) \|_{L^2(U)}^2 = 0$, it follows from \eqref{eq:phidecay} that $\Phi(t) = 0$ for all $t \in [0,T]$. Hence, $\mathbf{w} = 0$ in $C([0,T]; L^2(U))$. Therefore, $\mathbf{u} = \mathbf{v}$ in $L^2(0,T; H^{1,2}_0(U))$, as desired.
\end{proof}
\centerline{}
\noindent
The following not only provides existence results, but also identifies the solution via an analytic semigroup and derives corresponding $L^2$ and energy estimates.
\begin{theo} \label{mainwellposth}
Assume that {\bf (Hy)} holds and let $T \in (0, \infty)$. Let $N \geq 0$ be a constant appearing in Proposition \ref{stamenest}(ii), $\gamma:=\frac{N^2}{\lambda}$ and $g \in L^2(U)$. Then, the following hold:
%there exists a (unique) weak solution $\mathbf{u} \in L^2(0,T; H^{1,2}_0(U)) \cap C([0, \infty); L^2(U))$ to \eqref{bvpara} satisfying the following properties:
\begin{itemize}
\item[(i)]
There exists a $C_0$-semigroup of contractions $(S_t)_{t>0}$ on $L^2(U)$ such that the $L^2(U)$-space valued function $\mathbf{u} \in C([0, \infty); L^2(U))$ defined by
$$
\mathbf{u}(t)=e^{\gamma t} S_t g \quad \text{$t \in (0, \infty)$}, \qquad \mathbf{u}(0)=g
$$
is a (unique) weak solution to \eqref{bvpara}.
In particular, 
\begin{equation} \label{hcondil2est}
\|\mathbf{u}(t)\|_{L^2(U)} \leq e^{\gamma t} \|g\|_{L^2(U)} \quad \text{ for all $t \in [0,\infty)$}
\end{equation}
and
\begin{equation} \label{hcondienergy}
\| \mathbf{u}\|_{L^2(0,T; H^{1,2}_0(U))} 
\leq \left( \frac{e^{2\gamma T} }{\lambda} \right)^{1/2} \|g\|_{L^2(U)}.
\end{equation}
\item[(ii)]
$\mathbf{u}$ in (i) satisfies $\mathbf{u} \in C^{\infty}((0,\infty); L^2(U))$.
\end{itemize}
\end{theo}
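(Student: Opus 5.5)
We will use the shifted bilinear form $\mathcal{B}_\gamma(f,g) := \mathcal{B}(f,g) + \gamma \int_U fg\,dx$ on $H^{1,2}_0(U)$, with $\gamma = N^2/\lambda$. By Proposition~\ref{stamenest}(ii),
\[
\mathcal{B}_\gamma(f,f) \geq \tfrac{\lambda}{2}\|\nabla f\|^2_{L^2(U)} .
\]
By Proposition~\ref{stamenest}(i), $|\mathcal{B}_\gamma(f,g)| \leq (K + \gamma C_U^2)\|\nabla f\|\,\|\nabla g\|$, where $C_U = \frac{2(d-1)}{d}|U|^{1/d}$ comes from \eqref{sobolhold}. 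So $\mathcal{B}_\gamma$ is a bounded, coercive (not necessarily symmetric) form. It is therefore sectorial, i.e.\ it satisfies the strong sector condition relative to the norm $\|\nabla\cdot\|$, and $H^{1,2}_0(U)$ is dense in $L^2(U)$. By the standard theory of coercive closed forms (Lions' representation / Kato's theory), $\mathcal{B}_\gamma$ has an associated generator $-L_\gamma$, characterized by
\[
\mathcal{B}_\gamma(f,v) = (-L_\gamma f, v)_{L^2(U)} \quad \text{for all } v \in H^{1,2}_0(U).
\]
This generator produces an analytic $C_0$-semigroup of contractions $(S_t)_{t>0}$ on $L^2(U)$, with $S_t L^2(U)\subset D(L_\gamma) \subset H^{1,2}_0(U)$ for $t>0$. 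Since $e^{\gamma t}S_t$ has generator $L_\gamma + \gamma$, which corresponds to the original form $\mathcal{B}$, we set $\mathbf{u}(t)=e^{\gamma t}S_t g$. Then $\mathbf{u}\in C([0,\infty);L^2(U))$, and \eqref{hcondil2est} follows immediately from contractivity.

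Part (ii) also follows from analyticity: $t\mapsto S_t g$ is in $C^\infty((0,\infty);L^2(U))$, hence so is $\mathbf{u}$.

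For the energy estimate and the weak formulation, we first take $g\in D(L_\gamma)$, and set $\mathbf{w}(t)=S_t g$. Then $\mathbf{w}\in C^1([0,\infty);L^2(U))$ with $\mathbf{w}'=L_\gamma \mathbf{w}$, so
\[
\tfrac{d}{dt}\|\mathbf{w}(t)\|^2 = -2\mathcal{B}_\gamma(\mathbf{w}(t),\mathbf{w}(t)) \leq -\lambda\|\nabla \mathbf{w}(t)\|^2 .
\]
Integrating gives $\lambda\int_0^T\|\nabla \mathbf{w}\|^2dt \leq \|g\|^2$. Then $\|\nabla \mathbf{u}(t)\|^2 \leq e^{2\gamma T}\|\nabla\mathbf{w}(t)\|^2$ yields \eqref{hcondienergy}.

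For the weak equation, $\mathbf{u}' = (L_\gamma+\gamma)\mathbf{u}$ pointwise in $L^2(U)$. Pairing with $v\in H^{1,2}_0(U)$ gives
\[
(\mathbf{u}'(t),v) + \mathcal{B}(\mathbf{u}(t),v) = 0 ,
\]
which yields \eqref{iterativeint} after multiplying by $\eta$ and integrating by parts in time. By Theorem~\ref{equiproweakso} (iii)$\Rightarrow$(i), $\mathbf{u}$ is a weak solution, and $\mathbf{u}(0)=g$.

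For general $g\in L^2(U)$, we approximate by $g_n\in D(L_\gamma)$ with $g_n\to g$ in $L^2(U)$. Linearity together with \eqref{hcondienergy} applied to $g_n - g_m$ shows that $\mathbf{u}_n$ is Cauchy in $L^2(0,T;H^{1,2}_0(U))$, while $\mathbf{u}_n(t)\to\mathbf{u}(t)$ in $L^2(U)$ uniformly on $[0,T]$. Identity \eqref{iterativeint} is linear and continuous with respect to $L^2(0,T;H^{1,2}_0)$ convergence, by Proposition~\ref{stamenest}(i), so it passes to the limit. Theorem~\ref{equiproweakso} then gives $\mathbf{u}'\in L^2(0,T;H^{-1,2})$, and the initial condition holds by strong continuity of $S_t$. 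Uniqueness is Theorem~\ref{theouniquene}.

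The main obstacle is the abstract step: making sure that a non-symmetric coercive form with only $L^q$ drift genuinely yields a contraction semigroup that is analytic and has range in $H^{1,2}_0$. We would invoke the sectorial-form theorem (e.g.\ Ma–Röckner, Chapter I, or Kato). Its hypotheses are exactly the bounds of Proposition~\ref{stamenest} plus closedness, and closedness holds because the symmetric part of $\mathcal{B}_\gamma$ defines a norm equivalent to the $H^{1,2}_0$-norm.
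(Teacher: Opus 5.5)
Your proposal is correct and follows essentially the same route as the paper. Both use the shifted coercive form $\mathcal{B}_\gamma$ with the strong sector condition and its associated contraction semigroup $S_t$, which is analytic by the sector condition. Both then obtain the weak formulation for data in the generator's domain via Theorem~\ref{equiproweakso}, approximate general $g\in L^2(U)$, and use analyticity for (ii).

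The differences are minor. You get \eqref{hcondienergy} by differentiating $\|S_tg\|^2$ with the coercivity of $\mathcal{B}_\gamma$ and then multiplying by $e^{2\gamma T}$. The paper instead uses the energy identity for $\mathbf{u}$ with the unshifted form and absorbs the $-\gamma\int\|\mathbf{u}\|^2$ term via $\|\mathbf{u}(t)\|\le e^{\gamma t}\|g\|$, reaching the same constant. You also use linearity to make the approximants Cauchy in $L^2(0,T;H^{1,2}_0(U))$, where the paper uses weak compactness plus identification of the limit.
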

\begin{proof}
(i)
Define a bilinear form $\mathcal{B}_{\gamma}: H^{1,2}_0(U) \times H^{1,2}_0(U) \rightarrow \mathbb{R}$ given by
\[
\mathcal{B}_{\gamma}(f,g)= \int_{U} \langle A \nabla f, \nabla g \rangle  +\langle \mathbf{H}, \nabla f \rangle g+(c+\theta+\gamma) f g \,dx, \quad f,g \in H^{1,2}_0(U).
\]
By Proposition \ref{stamenest}(ii) and \eqref{sobolhold}, we obtain
\begin{align} 
\frac{\lambda}{2} \| \nabla f \|^2_{L^2(U)} \leq \mathcal{B}_{\gamma}(f,f) &\leq K \| \nabla f \|_{L^2(U)}^2 + \gamma \| f \|_{L^2(U)}^2 \nonumber  \\
&\leq \left(K +  \gamma \frac{4(d-1)^2}{d^2} |U|^{\frac{2}{d}} \right)  \| \nabla f \|^2_{L^2(U)} \quad \text{ for all $f \in C_0^{\infty}(U)$}, \label{comparestib}
\end{align}
where $K>0$ is a constant as in Proposition \ref{stamenest}(i). 
Since $H^{1,2}_0(U)$ is complete, it follows from \eqref{comparestib} that it is also complete with respect to the norm $\mathcal{B}_{\gamma}(\cdot, \cdot)^{1/2} + \| \cdot \|_{L^2(U)}$.
In addition, by Proposition \ref{stamenest}(i) and \eqref{sobolhold},
\begin{align*}
|\mathcal{B}_{\gamma}(f,g)| &\leq K \| \nabla f \|_{L^2(U)} \| \nabla g \|_{L^2(U)} + \gamma \| f \|_{L^2(U)} \| g\|_{L^2(U)} \\
&\leq K \| \nabla f \|_{L^2(U)} \| \nabla g \|_{L^2(U)} + \gamma \frac{4(d-1)^2}{d^2} |U|^{\frac{2}{d}} \| \nabla f \|_{L^2(U)} \| \nabla g \|_{L^2(U)} \\
&\leq \frac{2}{\lambda} \left(K + \gamma \frac{4(d-1)^2}{d^2} |U|^{\frac{2}{d}} \right) \mathcal{B}_{\gamma}(f,f)^{1/2} \mathcal{B}_{\gamma}(g,g)^{1/2} \quad \text{for all $f,g \in H^{1,2}_0(U)$}.
\end{align*}
Accordingly, $\mathcal{B}_{\gamma}$ satisfies the strong sector condition (see \cite[Chapter I, Section 2]{MR92}). 
Hence, $(\mathcal{B}_{\gamma}, H^{1,2}_0(U))$ is a coercive closed form on $L^2(U)$. 
Let $(R_{\alpha})_{\alpha>0}$ be a strongly continuous contraction resolvent on $L^2(U)$ associated with $(\mathcal{B}_{\gamma}, H^{1,2}_0(U))$ as in \cite[Chapter I, Theorem 2.8]{MR92}, and let $(S_t)_{t>0}$ be its corresponding $C_0$-semigroup of contractions on $L^2(U)$ according to Hille--Yosida's theorem (\cite[Chapter I, Theorem 1.12]{MR92}). 
Let $(\mathcal{L}, D(\mathcal{L}))$ be the generator associated to $(S_t)_{t>0}$ as in \cite[Chapter I, Definition 1.8]{MR92}. 
First, let $f \in D(\mathcal{L})$. Then $S_t f \in D(\mathcal{L}) \subset H^{1,2}_0(U)$ for all $t>0$. Moreover, by \cite[Chapter I, Corollary 2.10]{MR92}, we obtain
\begin{equation} \label{semigenolid}
\mathcal{B}_{\gamma}(S_t f, \varphi) = -\int_{U} \mathcal{L} S_t f  \cdot \varphi \,dx = -\int_{U} \partial_t S_t f \cdot \varphi \,dx \quad \text{for all $\varphi \in H^{1,2}_0(U)$}
\end{equation}
and
\begin{align*}
\frac{\lambda}{2} \| \nabla S_t f \|^2_{L^2(U)} \leq \mathcal{B}_{\gamma}(S_tf, S_t f) = -\int_{U} \mathcal{L} S_tf \cdot S_t f \,dx \leq \| \mathcal{L}f \|_{L^2(U)} \|f \|_{L^2(U)} \quad \text{for all $t \in (0, \infty)$}.
\end{align*}
For each $\phi \in L^2(U)$, define $\mathbf{u}^{\phi}:[0,\infty) \rightarrow L^2(U)$ by
\[
\mathbf{u}^{\phi}(t) := e^{\gamma t} S_t \phi, \quad t \in (0,\infty), 
\quad \text{and} \quad \mathbf{u}^{\phi}(0) := \phi \quad \text{in $L^{2}(U)$}.
\]
Then, for each $\phi \in L^2(U)$,  $\mathbf{u}^{\phi} \in C([0,\infty); L^2(U))$ and
\begin{equation} \label{gammal2contra}
\|\mathbf{u}^{\phi}(t) \|_{L^2(U)} \leq e^{\gamma t} \|\phi\|_{L^2(U)} \quad \text{ for all $t \in [0,\infty)$}.
\end{equation}
Moreover, $\mathbf{u}^f \in C([0,\infty); L^2(U))\cap L^2(0,T; H^{1,2}_0(U))$ satisfies $\mathbf{u}^f(0)=f$ in $L^2(U)$ and it follows from \eqref{semigenolid} that
\begin{align*}
&\int_0^T \mathcal{B}_{\gamma}(\mathbf{u}^f(t), \varphi\, \eta(t))\,dt =\int_0^T \mathcal{B}_{\gamma}(e^{\gamma t}S_t f, \varphi\, \eta(t))\,dt \\
&= -\int_0^T \int_{U} \partial_t S_t f \cdot \varphi \, e^{\gamma t}\eta(t)\,dx = \int_0^T \int_{U} S_t f \cdot \varphi \, \partial_t (e^{\gamma t} \eta ) \,dxdt \\
&= \int_0^T \int_{U} \mathbf{u}^f(t) \cdot \varphi \, \partial_t \eta \,dxdt 
+ \gamma \int_0^T \int_{U}\mathbf{u}^f(t)  \cdot \varphi \eta(t)\,dxdt 
\quad \text{for all $\varphi \in H^{1,2}_0(U)$ and $\eta \in C_0^{\infty}((0,T))$}.
\end{align*}
Consequently,
\begin{align} 
&\int_0^T \int_{U} \left\langle A \nabla \mathbf{u}^f(t), \nabla (\varphi \eta) \right\rangle  
+ \left\langle \mathbf{H}, \nabla \mathbf{u}^f(t) \right\rangle (\varphi \eta) + (c+\theta) \mathbf{u}^f(t) (\varphi \eta) \,dxdt  \nonumber \\
&\quad = \int_0^T \int_{U} \mathbf{u}^f(t) \cdot \partial_t (\varphi \eta) \,dxdt \quad \text{ for all $\varphi \in H^{1,2}_0(U)$ and $\eta \in C_0^{\infty}((0,T))$}. \label{weaksolpara}
\end{align}
Moreover, it follows from \eqref{weaksolpara} and Theorem~\ref{equiproweakso} that $\mathbf{u}^f$ is a weak solution to \eqref{bvpara}, where $g$ is replaced by $f$. In particular, replacing $\mathbf{v}$ with $\mathbf{u}^f$ in Theorem~\ref{equiproweakso}(ii), we obtain that
\begin{align}
   &\frac12 \int_0^T \frac{d}{dt} \left(\|\mathbf{u}^f(t)\|_{L^2(U)}^2 \right)\,dt  \nonumber \\
    &\quad + \int_0^T \int_U \langle A \nabla \mathbf{u}^f(t), \nabla \mathbf{u}^f(t) \rangle 
    + \langle \mathbf{H}, \nabla \mathbf{u}^f(t) \rangle \mathbf{u}^f(t) 
    + (c+\theta) \mathbf{u}^f(t) \mathbf{u}^f(t) \,dx \,dt = 0,  \label{pdeweaktim}
\end{align}
and hence, by using the fundamental theorem of calculus and Proposition \ref{stamenest}(ii),
\[
\frac12 \|\mathbf{u}^f(T)\|^2_{L^2(U)} - \frac12 \|f\|_{L^2(U)}^2 
+ \frac{\lambda}{2} \int_0^T \| \nabla \mathbf{u}^f \|^2_{L^2(U)} \,dt 
- \gamma \int_{0}^T \| \mathbf{u}^f \|^2_{L^2(U)} \,dt \leq 0,
\]
where $\gamma := \frac{N^2}{\lambda}$ and $N \geq 0$ is the constant appearing in Proposition \ref{stamenest}(ii).
Hence, it follows from \eqref{pdeweaktim} and \eqref{gammal2contra} that
\begin{align*}
\frac{\lambda}{2} \| \mathbf{u}^f\|^2_{L^2(0,T; H^{1,2}_0(U))} 
&\leq \gamma \int_{0}^T \| \mathbf{u}^f \|^2_{L^2(U)} \,dt + \frac12 \|f\|^2_{L^2(U)} \\
&\leq  \left( \gamma \int_0^T e^{2\gamma t} \,dt + \frac12 \right) \|f\|^2_{L^2(U)} 
= \frac{1}{2}e^{2\gamma T}  \|f\|^2_{L^2(U)},
\end{align*}
which yields
\begin{equation} \label{energyestima}
\| \mathbf{u}^f\|_{L^2(0,T; H^{1,2}_0(U))} 
\leq \left(\frac{e^{2\gamma T} }{\lambda}\right)^{\frac12} \|f\|_{L^2(U)}.
\end{equation}
%%%%%%%
\iffalse
Now, let $g \in L^2(U)$ and define $g_n := n R_n g \in D(\mathcal{L})$ for $n \geq 1$. 
Then, by the strong continuity of the resolvent $(R_{\alpha})_{\alpha>0}$ on $L^2(U)$, we have 
$\lim_{n \rightarrow \infty} g_n = g$ in $L^2(U)$. 
From the $L^2(U)$-contraction property of $(S_t)_{t>0}$, \eqref{energyestima} where $f$ is replaced by $g_n$, and the weak compactness of $L^2(0,T; H^{1,2}_0(U))$, we obtain $\mathbf{u}^g \in L^2(0,T; H^{1,2}_0(U))$ and there exists a subsequence of $(\mathbf{u}^{g_n})_{n \geq 1}$, say again  $(\mathbf{u}^{g_n})_{n \geq 1}$, such that
\begin{equation} \label{weakconver}
\lim_{n \rightarrow \infty} \mathbf{u}^{g_n} = \mathbf{u}^g \quad \text{weakly in } L^2(0,T; H^{1,2}_0(U)),
\end{equation}
and the estimates in \eqref{energyestima} remain valid where $f$ is replaced by $g$.
Noting \eqref{gammal2contra} and applying \eqref{weakconver} to the sequence of weak solutions $(\mathbf{u}^{g_n})_{n \geq 1}$  to \eqref{bvpara}, where $g$ is replaced by $g_n$, we obtain the desired conclusion for (i), where we write $\mathbf{u}^g$ as $\mathbf{u}$. \\ \\
\fi
%%%%%%%%%
Now, let $g \in L^2(U)$ and define $g_n := n R_n g \in D(\mathcal{L})$ for $n \geq 1$. Then, by the strong continuity of the resolvent $(R_{\alpha})_{\alpha>0}$ on $L^2(U)$, we have 
\[
\lim_{n \rightarrow \infty} g_n = g \quad \text{in } L^2(U).
\]
Moreover, since $(S_t)_{t>0}$ is a contraction semigroup,
\[
\|\mathbf{u}^{g_n}-\mathbf{u}^{g}\|_{L^2(0,T;L^2(U))}^2
\leq
\left(\int_0^T e^{2\gamma t}\,dt\right)
\|g_n-g\|_{L^2(U)}^2
\longrightarrow 0.
\]
On the other hand, from \eqref{energyestima} with $f$ replaced by $g_n$ and the weak compactness of $L^2(0,T; H^{1,2}_0(U))$, there exist a subsequence of $(\mathbf{u}^{g_n})_{n \geq 1}$, say again $(\mathbf{u}^{g_n})_{n \geq 1}$, and some $\mathbf{v} \in L^2(0,T;H^{1,2}_0(U))$ such that
\[
\lim_{n \rightarrow \infty} \mathbf{u}^{g_n} = \mathbf{v}
\quad \text{weakly in } L^2(0,T;H^{1,2}_0(U)).
\]
The preceding strong convergence in $L^2(0,T;L^2(U))$ implies that $\mathbf{v}=\mathbf{u}^g$. Hence, $\mathbf{u}^g \in L^2(0,T;H^{1,2}_0(U))$ and
\begin{equation} \label{weakconver}
\lim_{n \rightarrow \infty} \mathbf{u}^{g_n} = \mathbf{u}^g 
\quad \text{weakly in } L^2(0,T; H^{1,2}_0(U)).
\end{equation}
Moreover, the estimate in \eqref{energyestima} remains valid with $f$ replaced by $g$ by weak lower semicontinuity.
Noting \eqref{gammal2contra} and applying \eqref{weakconver} to the sequence of weak solutions $(\mathbf{u}^{g_n})_{n \geq 1}$ to \eqref{bvpara}, where $g$ is replaced by $g_n$, we obtain the desired conclusion for (i), where we write $\mathbf{u}^g$ as $\mathbf{u}$. \\ \\
(ii) Since $(\mathcal{B}_{\gamma}, H^{1,2}_0(U))$ satisfies the strong sector condition, $(S_t)_{t>0}$ is an analytic semigroup by \cite[Chapter I, Corollary 2.21]{MR92}. Therefore, by \cite[Proposition 2.21(iv)]{Lu95}, (ii) follows.
\end{proof}

\begin{rem}
The existence and uniqueness of weak solutions to \eqref{bvpara} were established in Theorems \ref{theouniquene} and \ref{mainwellposth} under assumption {\bf (Hy)}. Notably, the condition {\bf (Hy)} on the drift coefficient $\mathbf{H}$ is slightly more general than the structural condition {\bf (S)} used in our main result, Theorem \ref{maintheore}. However, the estimates in Theorem \ref{mainwellposth}, specifically the $L^2$-estimate \eqref{hcondil2est} and the energy estimate \eqref{hcondienergy}, both exhibit exponential growth on the right-hand side as $T$ increases. In sharp contrast, the energy estimate \eqref{energyrhonnewori} in our main theorem remains uniformly bounded independently of $T$, and the $L^2$-estimate \eqref{expdecaacer} even shows exponential decay as $t$ increases.
Achieving this decay behavior in our framework is critically dependent on the drift coefficient $\mathbf{H}$ satisfying the structural condition {\bf (S)}. Whether exponential decay estimates similar to \eqref{expdecaacer} can be obtained under the more general condition {\bf (Hy)} remains an interesting open problem.
\end{rem}

\section{Weighted Semigroups via Dirichlet forms} \label{dfapproach}
In Section \ref{semigrouponl2dx}, we derived the existence and uniqueness of solutions to the initial-boundary value problems for the parabolic equations \eqref{bvpara}, as well as basic energy and $L^2$ estimates, by identifying the solutions with semigroups on the $L^2$-space with respect to the Lebesgue measure.
However, these solutions provide no information on whether they decay to zero as $t \to \infty$. In fact, the $L^2$ estimates become less useful for large $t$, since the factor $e^{\gamma t}$ grows rapidly.
To overcome this issue, we consider an appropriate weighted measure $\mu = \rho\,dx$ and construct semigroups on the $L^2$-space with respect to $\mu$ via the theory of Dirichlet forms.\\ \\
In this section, we consider the following condition independently:
\\ \\
\textbf{(T)}: $U$ is a bounded open subset of $\mathbb{R}^d$ with $d \geq 2$. $\rho \in H^{1,2}(U) \cap L^{\infty}(U)$ and there exists a constant $\beta>0$ such that 
$$
\rho(x) \geq \beta \quad \text{ for a.e. $x \in U$}.
$$
$\mu=\rho\,dx$ and
$\mathbf{B} \in L^p(U, \mathbb{R}^d)$ with $p \in (d, \infty)$ satisfies
$$
\int_{U} \langle \mathbf{B}, \nabla f \rangle  \,d\mu=0 \quad \text{ for all $f \in C_0^{\infty}(U)$}.
$$ 
$A = (a_{ij})_{1 \leq i,j \leq d}$ is a \textit{(possibly non-symmetric) matrix of measurable functions on $\mathbb{R}^d$} such that for some constants $M > 0$ and $\lambda > 0$, \eqref{ellipticity} holds.
%\[
%\max_{1 \leq i,j \leq d} |a_{ij}(x)| \leq M, \quad \langle A(x)\xi, \xi \rangle \geq \lambda \|\xi\|^2 \quad \text{for a.e. } x \in \mathbb{R}^d \text{ and for all } \xi \in \mathbb{R}^d.
%\]
$c \in L^{s}(U)$ satisfies $c \geq 0$ a.e. in $U$,  where $s \in (1, \infty)$ if $d=2$, and $s:=\frac{d}{2}$ if $d \geq 3$.\\ \\
Here and below, $\inf_U\rho$ and $\sup_U\rho$ denote the essential infimum and essential supremum, respectively. Under assumption {\bf (T)}, define the bilinear form $(\mathcal{E}, C_0^{\infty}(U))$ by
\begin{equation} \label{underlydf}
\mathcal{E}(f,g) = \int_U \langle A \nabla f, \nabla g \rangle \, d\mu 
+ \int_U \langle \mathbf{B}, \nabla f \rangle g \, d\mu + \int_U c f g \,d\mu, \quad f, g \in C_0^\infty(U).
\end{equation}
Then, using the method in \cite[Theorem 3.2(i), (ii)]{L25jm},
$(\mathcal{E}, C_0^{\infty}(U))$ satisfies the strong sector condition and is closable on $L^2(U, \mu)$ (cf. \cite[Chapter I, Section 2]{MR92}). Moreover, by \cite[Theorem 3.2(iv)]{L25jm}
the closure of $(\mathcal{E}, C_0^{\infty}(U))$ on $L^2(U, \mu)$ denoted by $(\mathcal{E}, D(\mathcal{E}))$ is a Dirichlet form (cf. \cite[Chapter I, Section 4]{MR92}). In particular, by \cite[Theorem 3.2(iii)]{L25jm}, $D(\mathcal{E})=H^{1,2}_0(U)$.
A particularly important example of such a vector field will be constructed in Theorem~\ref{leetransform}. More precisely, it is obtained from the original drift coefficient $\mathbf{H}$ through the divergence-free transformation introduced there.
\begin{theo} \label{weaksolsemi}
Assume that {\bf (T)} holds. Let $(\mathcal{E}, C_0^{\infty}(U))$ be the bilinear form defined as in \eqref{underlydf}, and let $(\mathcal{E}, D(\mathcal{E}))$ denote its closure, which is a Dirichlet form on $L^2(U, \mu)$. Let $(T_t)_{t>0}$ be a sub-Markovian $C_0$-semigroup of contractions on $L^2(U, \mu)$ associated to $(\mathcal{E}, D(\mathcal{E}))$. Let $\theta \in [0, \infty)$ be a constant. For each $\phi \in L^2(U)$, define $\mathbf{u}^\phi \in C([0, \infty); L^2(U))$ by
\begin{align*}
\mathbf{u}^{\phi}(t):=e^{-\theta t} T_t \phi, \quad t \in (0, \infty), \qquad \mathbf{u}^{\phi}(0):=\phi.
\end{align*}
Let $g \in L^2(U)$ and $T \in (0, \infty)$.
Then, the following hold:
\begin{itemize}
\item[(i)]
The function $\mathbf{u}^{g} \in C([0, \infty); L^2(U, \mu)) \cap L^2(0,T; H^{1,2}_0(U))$ satisfies, for all $v \in H^{1,2}_0(U)$ and $\eta \in C_0^{\infty}((0,T))$,
\[
\int_0^T \int_{U}\mathbf{u}^g(t) \partial_t (v \eta) \, d\mu dt= \int_0^T \int_U \langle A \nabla \mathbf{u}^g, \nabla (v \eta) \rangle 
+ \langle \mathbf{B}, \nabla \mathbf{u}^g \rangle (v \eta) + (c+\theta) \mathbf{u}^g (v \eta) \,d\mu \,dt.
\]
In particular,
$$
\int_0^T \int_{U}  \|\nabla \mathbf{u}^{g}(t) \|^2 d\mu dt  \leq \frac{1}{2\lambda} \| g\|^2_{L^2(U, \mu)}.
$$
\item[(ii)]
Let 
$$
\kappa_0:= \theta+ \frac{d^2 \lambda }{4(d-1)^2 |U|^{\frac{2}{d}}} \left(\frac{\inf_{U} \rho}{\sup_{U} \rho} \right). 
$$
Then, $\mathbf{u}^g$ satisfies
$$
\|\mathbf{u}^{g}(t)\|_{L^2(U, \mu)} \leq e^{-\kappa_0 t} \|g\|_{L^2(U, \mu)} \quad \text{ for all $t \in (0, \infty)$}.
$$
\item[(iii)]
Let $r \in [1, \infty)$. If $g \in L^2(U, \mu) \cap L^r(U, \mu)$, then $\mathbf{u}^{g} \in C([0, \infty); L^r(U, \mu))$ and
$$
\| \mathbf{u}^{g}(t) \|_{L^r(U, \mu)} \leq e^{-\theta t} \|g \|_{L^r(U, \mu)} \quad \text{ for all $t \in (0, \infty)$}.
$$
Moreover, if $g \in L^{\infty}(U)$, then
$$
\| \mathbf{u}^{g}(t) \|_{L^{\infty}(U, \mu)} \leq e^{-\theta t} \|g \|_{L^{\infty}(U, \mu)} \quad \text{ for all $t \in (0, \infty)$}.
$$
\end{itemize}
\end{theo}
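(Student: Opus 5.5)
We mirror the proof of Theorem~\ref{mainwellposth}, working in $L^2(U,\mu)$ with the Dirichlet form $(\mathcal{E},D(\mathcal{E}))$ instead of $\mathcal{B}_\gamma$. Since $\beta\le\rho\le\sup_U\rho$, the spaces $L^2(U)$ and $L^2(U,\mu)$ coincide with equivalent norms, so $\mathbf{u}^\phi$ is well defined for $\phi\in L^2(U)$.

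\textbf{Step 1: (i) for regular data.} Let $\mathcal{E}_\theta:=\mathcal{E}+\theta(\cdot,\cdot)_{L^2(U,\mu)}$; its semigroup is $e^{-\theta t}T_t$. Let $(L,D(L))$ be the generator of $(T_t)$ and take $f\in D(L)$. By the correspondence between coercive closed forms and generators \cite[Chapter I, Corollary 2.10]{MR92},
\[
\mathcal{E}_\theta(\mathbf{u}^f(t),\varphi)=-\int_U \partial_t\mathbf{u}^f(t)\,\varphi\,d\mu
\]
for all $\varphi\in H^{1,2}_0(U)=D(\mathcal{E})$. Multiplying by $\eta(t)$ and integrating by parts in $t$ gives the identity in (i) for $\mathbf{u}^f$.

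For the energy bound, the key point is the divergence-free condition in \textbf{(T)}: applying it to $f^2/2$ for $f\in C_0^\infty(U)$ gives
\[
\int_U\langle\mathbf{B},\nabla f\rangle f\,d\mu=0,
\]
and this extends to $H^{1,2}_0(U)$ by density, using $\mathbf{B}\in L^p$ with $p>d$ and Sobolev embedding. Hence
\[
\mathcal{E}_\theta(f,f)\ge\lambda\int_U\|\nabla f\|^2\,d\mu+\int_U(c+\theta)f^2\,d\mu .
\]
Testing with $\varphi=\mathbf{u}^f(t)$ gives
\[
\frac{d}{dt}\|\mathbf{u}^f(t)\|^2_{L^2(U,\mu)}=-2\mathcal{E}_\theta(\mathbf{u}^f,\mathbf{u}^f)\le-2\lambda\int_U\|\nabla\mathbf{u}^f\|^2\,d\mu .
\]
Integrating over $(0,T)$ yields $\int_0^T\!\int_U\|\nabla\mathbf{u}^f\|^2\,d\mu\,dt\le\frac{1}{2\lambda}\|f\|^2_{L^2(U,\mu)}$.

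For general $g$, take $g_n:=nR_ng\to g$. Use $L^2(U,\mu)$-contractivity for strong convergence in $L^2(0,T;L^2(U,\mu))$, the uniform bound above, and weak compactness in $L^2(0,T;H^{1,2}_0(U))$ (the weighted and unweighted gradient norms are equivalent). This identifies the weak limit with $\mathbf{u}^g$. The identity in (i) passes to the limit since every term is continuous under weak convergence in $L^2(0,T;H^{1,2}_0(U))$: the $\mathbf{B}$-term is handled via H\"older with $\rho v\eta\in L^\infty(0,T;L^{2p/(p-2)})$, since $v\in H^{1,2}_0\subset L^{2d/(d-2)}$ and $2p/(p-2)<2d/(d-2)$, and the $c$-term similarly. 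The energy bound passes to the limit by weak lower semicontinuity.

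\textbf{Step 2: (ii).} For $f\in D(L)$, combine the same differential inequality with the zero-order term $\theta$ and the Poincar\'e-type inequality \eqref{sobolhold}:
\[
\int_U f^2\,d\mu\le\sup_U\rho\,\frac{4(d-1)^2}{d^2}|U|^{2/d}\int_U\|\nabla f\|^2dx\le\frac{\sup_U\rho}{\inf_U\rho}\,\frac{4(d-1)^2}{d^2}|U|^{2/d}\int_U\|\nabla f\|^2\,d\mu .
\]
This gives $\frac{d}{dt}\|\mathbf{u}^f\|^2_{L^2(U,\mu)}\le-2\kappa_0\|\mathbf{u}^f\|^2_{L^2(U,\mu)}$, and Gronwall gives the decay. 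It extends to all $g$ by density and the continuity of $T_t$.

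\textbf{Step 3: (iii).} This uses the sub-Markovian property. Because the adjoint form also has divergence-free drift with respect to $\mu$, the dual form is Dirichlet as well, so $T_t$ is also $L^1(U,\mu)$-contractive. Riesz--Thorin, or the standard extension of sub-Markovian semigroups whose adjoints are sub-Markovian, then gives $\|T_tg\|_{L^r(U,\mu)}\le\|g\|_{L^r(U,\mu)}$ for $r\in[1,\infty]$, with strong continuity on $L^r$ for $r<\infty$. Multiplying by $e^{-\theta t}$ gives the claim. The $L^\infty$ bound follows directly from sub-Markovianity: $|T_tg|\le\|g\|_\infty$. Continuity on $L^r$ for $r>2$ follows from $L^2$-continuity together with uniform $L^\infty$-type bounds via truncation.

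\textbf{Main obstacle.} The main obstacle is Step 3's $L^1$ contractivity: verifying that the coadjoint form $\hat{\mathcal{E}}(f,g)=\mathcal{E}(g,f)$ also satisfies the Dirichlet property. This again relies on $\int\langle\mathbf{B},\nabla(f^+\wedge1)\rangle\,d\mu$-type cancellations from the divergence-free condition, and would be invoked from \cite[Theorem 3.2]{L25jm}.
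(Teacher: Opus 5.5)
Your proposal is correct and follows essentially the same route as the paper. The shared steps are: approximation by $g_n=nG_ng\in D(L)$; the energy inequality obtained by testing the form--generator identity with $\mathbf{u}^{g_n}(t)$; identification of the weak limit via strong $L^2(0,T;L^2(U,\mu))$ convergence; the inequality \eqref{sobolhold} for (ii); and $L^1$-contractivity from the sub-Markovian adjoint semigroup together with Riesz--Thorin for (iii). You usefully make explicit the cancellation $\int_U\langle\mathbf{B},\nabla f\rangle f\,d\mu=0$, which the paper uses tacitly. The ``main obstacle'' you flag is not an extra step, because a Dirichlet form in the sense of \cite{MR92}, as assumed in the statement, already includes the contraction property of the co-form, and that is exactly what the paper invokes.
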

\begin{proof}
(i)
Let $(L, D(L))$ denote the generator, and $(G_{\alpha})_{\alpha > 0}$ the sub-Markovian $C_0$-resolvent of contractions on $L^2(U, \mu)$ associated with the Dirichlet form $(\mathcal{E}, D(\mathcal{E}))$.  Fix $g \in L^2(U, \mu)$ and define $g_n := nG_n g \in D(L)$ for each $n \geq 1$. Then, by the strong continuity of $(G_{\alpha})_{\alpha>0}$ on $L^2(U, \mu)$, $\lim_{n \rightarrow \infty} g_n =g$ in $L^2(U, \mu)$.
In particular, by the $L^2(U, \mu)$-contraction property of $(G_{\alpha})_{\alpha>0}$, 
\begin{equation} \label{contraestimgn}
\|g_n\|_{L^2(U, \mu)} \leq \|g\|_{L^2(U, \mu)} \quad \text{ for all $n \geq 1$}.
\end{equation}
Moreover, for each $t \in (0, \infty)$, we have $T_t g_n \in D(L) \subset D(\mathcal{E}) = H^{1,2}_0(U)$,
\begin{align*}
\partial_t \mathbf{u}^{g_n}(t) =\partial_t(e^{-\theta t} T_t g_n)= -\theta e^{-\theta t} T_t g_n +e^{-\theta t} LT_t g_n \quad \text{ in $L^2(U, \mu)$}.
\end{align*}
Thus, for each $t \in (0,\infty)$ and $v \in H^{1,2}_0(U)$
\begin{align}
& -\int_U \partial_t \mathbf{u}^{g_n}(t) v d\mu  = \int_U \theta e^{-\theta t} T_t g_n \cdot v \,d\mu -  \int_U e^{-\theta t} LT_t g_n \cdot v \,d\mu  \nonumber \\
&\quad =  \int_U \theta e^{-\theta t} T_t g_n \cdot v\,d\mu + e^{-\theta t}  \mathcal{E}(T_t g_n,  v ) \nonumber \\
&\quad = \theta \int_U \mathbf{u}^{g_n}(t) \cdot v d\mu + \int_U \langle A \nabla \mathbf{u}^{g_n}(t), \nabla v \rangle \, d\mu 
+ \int_U \langle \mathbf{B}, \nabla \mathbf{u}^{g_n}(t) \rangle v \, d\mu + \int_U c \mathbf{u}^{g_n}(t) v \,d\mu. \label{intpartfor}
\end{align}
Meanwhile, replacing $v$ by $\mathbf{u}^{g_n}(t)$ in \eqref{intpartfor}, it follows that 
\begin{align} \label{grownineq}
-\frac12 \partial_t \left( \int_{U} |\mathbf{u}^{g_n}(t)|^2 d\mu \right) \geq \theta \int_{U} |\mathbf{u}^{g_n}|^2 d\mu + \lambda \int_{U} \| \nabla \mathbf{u}^{g_n}(t) \|^2\,d\mu,
\end{align}
and hence by \eqref{contraestimgn}
\begin{align}
\lambda \int_0^T \int_{U}  \|\nabla \mathbf{u}^{g_n}(t) \|^2 d\mu dt &\leq \frac12 \int_{U} |g_n|^2 d\mu -\frac12 \int_{U} |\mathbf{u}^{g_n}(T)|^2 \,d\mu  \leq \frac12\| g_n\|^2_{L^2(U, \mu)} \leq \frac12 \|g\|^2_{L^2(U, \mu)}.  \label{energyestimu}
\end{align}
%Moreover,
%\begin{align*}
%\mathcal{E}(\mathbf{u}^{g_n}(t), \mathbf{u}^{g_n}(t)) &= \mathcal{E}(T_t g_n,  T_t g_n ) = -\int_{U} L T_t g_n \cdot T_t g_n d\mu \\
%&\leq \| L g_n \|_{L^2(U, \mu)} \| g_n \|_{L^2(U, \mu)} \quad \text{ for all $t \in (0, \infty)$}.
%\end{align*}
Therefore, $\mathbf{u}^{g_n} \in L^2(0,T; H^{1,2}_0(U))$, and for each $v \in H^{1,2}_0(U)$ and $\eta \in C_0^{\infty}((0,T))$,
\begin{align}
&\quad \int_0^T \int_U  \mathbf{u}^{g_n}(t) \partial_t(v\eta) d\mu dt= -\int_0^T \int_U \partial_t \mathbf{u}^{g_n}(t) (v\eta) d\mu dt \nonumber \\
&= \int_0^T \int_U \langle A \nabla \mathbf{u}^{g_n}(t), \nabla (v\eta) \rangle \, d\mu dt +\int_0^T \int_U \langle \mathbf{B}, \nabla \mathbf{u}^{g_n}(t) \rangle (v\eta) \, d\mu dt +\int_0^T \int_U (c+\theta) \mathbf{u}^{g_n}(t) (v\eta) \,d\mu dt. \label{estimiden}
\end{align}
By the $L^2(U,\mu)$-contraction property of $(T_t)_{t>0}$, we have
\[
\|\mathbf{u}^{g_n}-\mathbf{u}^{g}\|_{L^2(0,T;L^2(U,\mu))}^2 \leq \left(\int_0^T e^{-2\theta t}\,dt\right) \|g_n-g\|_{L^2(U,\mu)}^2
\longrightarrow 0 \quad \text{as } n\rightarrow\infty.
\]
Moreover, by the energy estimate \eqref{energyestimu} and the weak compactness of
$L^2(0,T;H^{1,2}_0(U))$, there exists a subsequence of $(\mathbf{u}^{g_n})_{n\geq1}$, still denoted by
$(\mathbf{u}^{g_n})_{n\geq1}$, which converges weakly in $L^2(0,T;H^{1,2}_0(U))$. The above strong convergence in $L^2(0,T;L^2(U,\mu))$ shows that its weak limit must coincide with $\mathbf{u}^{g}$. Hence,
\begin{equation} \label{weakconvesol}
\lim_{n\rightarrow\infty}\mathbf{u}^{g_n}=\mathbf{u}^{g}, \quad \text{weakly in $L^2(0,T;H^{1,2}_0(U))$}.
\end{equation}
Therefore, the assertion (i) follows from \eqref{weakconvesol}, \eqref{estimiden} and \eqref{energyestimu}. \\ \\
(ii) 
\eqref{grownineq} and \eqref{sobolhold} induce that for each $t \in (0, \infty)$
\begin{align} \label{grownineq2}
&-\frac12 \partial_t \left( \int_{U} |\mathbf{u}^{g_n}(t)|^2 d\mu \right) \geq \theta \int_{U} |\mathbf{u}^{g_n}|^2 d\mu + \lambda \int_{U} \| \nabla \mathbf{u}^{g_n}(t) \|^2\,d\mu  \nonumber \\
& \geq   \theta \int_{U} |\mathbf{u}^{g_n}|^2 d\mu + \lambda (\inf_{U} \rho) \int_{U} \| \nabla \mathbf{u}^{g_n}(t) \|^2\,dx  \nonumber \\
& \geq     \kappa_0 \int_{U} |\mathbf{u}^{g_n}|^2 d\mu, \nonumber
\end{align}
where $\kappa_0:= \theta+ \frac{d^2 \lambda }{4(d-1)^2 |U|^{\frac{2}{d}}} \left(\frac{\inf_{U} \rho}{\sup_{U} \rho} \right)$. Define $\Psi_n(t) := \int_U |\mathbf{u}^{g_n}(t)|^2 \, d\mu$ for $t \in (0, \infty)$.
Then $\frac{d}{dt}\left(e^{2\kappa_0t}\Psi_n(t) \right) \leq 0$ for all $t \in (0, \infty)$, and hence by \eqref{contraestimgn}
$$
0 \leq e^{2\kappa_0 t} \Psi_n(t) \leq \Psi_n(0)=\int_U |g_n|^2 d\mu \leq \int_U |g|^2 d\mu.
$$
Thus,
$$
\|\mathbf{u}^{g_n}(t)\|_{L^2(U, \mu)} \leq e^{-\kappa_0 t} \|g\|_{L^2(U, \mu)} \quad \text{ for all $t \in (0, \infty)$}.
$$
Letting $n \rightarrow \infty$, the conclusion follows by the strong continuity of $(T_t)_{t>0}$ on $L^2(U, \mu)$. \\ \\
(iii) We claim that the restriction $(T_t)_{t > 0}|_{L^\infty(U, \mu)}$ extends to a sub-Markovian $C_0$-semigroup of contractions on $L^r(U, \mu)$ for all $r \in [1, \infty)$. Since $(\mathcal{E}, D(\mathcal{E}))$ is a Dirichlet form, the adjoint semigroup $(T_t')_{t > 0}$ is also a sub-Markovian $C_0$-semigroup of contractions on $L^2(U, \mu)$. Then, for any $f \in L^\infty(U, \mu)$ and $t > 0$, we have
\begin{equation} \label{l1contra}
\int_U |T_t f|  d\mu \leq \int_U T_t |f| \cdot 1_U  d\mu = \int_U |f| \cdot T_t' 1_U  d\mu \leq \int_U |f|  d\mu.
\end{equation}
Thus, $(T_t)|_{L^{\infty}(U, \mu)}$ extends to a sub-Markovian contraction semigroup on $L^1(U, \mu)$, say again $(T_t)_{t>0}$.
By the inequality \eqref{l1contra}, the sub-Markovian property of $(T_t)_{t > 0}$, and the Riesz–Thorin interpolation theorem (see \cite[15, Chapter 2, Theorem 2.1]{SS11}), it follows that $(T_t)_{t>0}$ is a sub-Markovian contraction semigroup on $L^r(U, \mu)$
for each $r \in [1, \infty)$. In particular, for each $f \in L^{\infty}(U, \mu)$
$$
\|T_t f - f \|_{L^1(U, \mu)} \leq \mu(U)^{1/2} \| T_t f -f \|_{L^2(U, \mu)} \longrightarrow 0 \quad  \text{ as $t \rightarrow 0+$}.
$$
Hence, by the $L^1(U, \mu)$-contraction property of $(T_t)_{t>0}$ and the standard $3$-$\varepsilon$ argument, $(T_t)_{t>0}$ is strongly continuous on $L^1(U, \mu)$. Moreover, for each $r \in [1, \infty)$ and $f \in L^{\infty}(U, \mu)$,
$$
\|T_t f - f\|^r_{L^r(U, \mu)} = \int_{U} |T_t f - f| |T_t f - f|^{r-1} d\mu \leq 2^{r-1}\|f\|_{L^{\infty}(U, \mu)}^{r-1}  \|T_t f - f \|_{L^1(U, \mu)} \longrightarrow 0 \quad  \text{ as $t \rightarrow 0+$}.
$$
Therefore, again by using $L^r(U, \mu)$-contraction properties of $(T_t)_{t>0}$ and the standard $3$-$\varepsilon$ argument, $(T_t)_{t>0}$ is strongly continuous on $L^r(U, \mu)$ for each $r \in [1, \infty)$, so the claim is shown. Finally, since $\mathbf{u}^g(t)=e^{-\theta t}T_t g$ for all $t>0$, the conclusion in (iii) follows.
\end{proof}

\section{Identification of the weighted semigroups} \label{mainsection}
In Section \ref{dfapproach}, we constructed a semigroup using appropriate weighted measures $\mu$ and divergence-free vector fields with respect to $\mu$, through the associated Dirichlet form.
A particularly notable feature was that the constant appearing in the $L^2$-contraction estimates takes the form $e^{-\kappa_0 t}$, which exhibits exponential decay as time grows.
In this section, we return to our original problem, the initial-boundary value problem \eqref{bvpara}, and rigorously demonstrate how the sub-Markovian $C_0$-semigroup of contractions $(T_t)_{t > 0}$ on $L^2(U, \mu)$, constructed in Section \ref{dfapproach}, can be identified as its solution. To achieve this, we develop the argument based on the idea introduced in the recent result \cite{L25jm}, which provides a novel approach to linear elliptic equations.\\ 
In this section, we consider condition {\bf (S)} in the introduction which is also assumed in \cite{L25jm}:
It is straightforward to see that if {\bf (S)} holds, $\theta \in [0, \infty)$ is a constant and $c \in L^{s}(U)$ satisfies $c \geq 0$ a.e. in $U$, where $s \in (1, \infty)$ if $d = 2$, and $s := \frac{d}{2}$ if $d \geq 3$, then {\bf (Hy)} follows. We now introduce a key result stated in \cite{L25jm}, which ensures the existence of $\rho$ under the assumption {\bf (S)}.

\begin{theo} \label{existinvar}
\label{rho_existence}
Assume that \textbf{(S)} holds. Then, the following hold:
\begin{itemize}
    \item[(i)] Let $x_1 \in U$. Then, there exists $\rho \in H^{1,2}(B_{4R}(x_0)) \cap C(B_{4R}(x_0))$ with $\rho(x) > 0$ for all $x \in B_{4R}(x_0)$ and $\rho(x_1) = 1$ such that
    \[
        \int_{B_{4R}(x_0)} \langle A^T \nabla \rho + \rho \mathbf{H}, \nabla \varphi \rangle dx = 0
        \quad \text{for all } \varphi \in C_0^\infty(B_{4R}(x_0)).
    \]
    
    \item[(ii)] Let $\rho$ be as in Theorem \ref{rho_existence}(i). Then, there exists a constant $K_1 \geq 1$ which only depends on $d, \lambda, M, R, p, \|h\|_{L^p(U)}$ such that
    \[
        \max_{\overline{B}_{3R}(x_0)} \rho \leq K_1 \min_{\overline{B}_{3R}(x_0)} \rho,
        \quad \text{and hence} \quad
        1 \leq \frac{\max_{\overline{U}} \rho}{\min_{\overline{U}} \rho} \leq K_1.
    \]
\item[(iii)] If $a_{ij} \in C(\mathbb{R}^d)$ for all $1 \leq i,j \leq d$, then $\rho \in H^{1,p}(U)$.
\end{itemize}
\end{theo}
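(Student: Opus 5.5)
Part (i) is the existence of a positive weak solution of the adjoint (stationary Fokker--Planck type) equation $\operatorname{div}(A^T\nabla\rho+\rho\mathbf{H})=0$ in $B_{4R}(x_0)$. I would build it by approximation on a slightly larger ball. First mollify $\mathbf{H}$ to $\mathbf{H}_n\in C_0^\infty$ with $\|\mathbf{H}_n\|_{L^p}\le\|h\|_{L^p}$ and $\mathbf{H}_n\to\mathbf{H}$ in $L^p$. For each $n$, solve a Dirichlet problem of the form $-\operatorname{div}(A^T\nabla w_n+w_n\mathbf{H}_n)=\operatorname{div}(\mathbf{H}_n)$ on $B_{5R}(x_0)$ with $w_n\in H^{1,2}_0$, and set $\rho_n:=1+w_n$. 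This problem is solvable by Lax--Milgram together with the Fredholm alternative. Coercivity up to a shift follows from the argument of Proposition \ref{stamenest}(ii) applied to the adjoint form. Uniqueness for the homogeneous adjoint problem is obtained by duality: the maximum principle for the operator $-\operatorname{div}(A\nabla\cdot)+\langle\mathbf{H}_n,\nabla\cdot\rangle$, which has no zero-order term, gives injectivity. The weak maximum principle for the adjoint equation then yields $\rho_n>0$ (more precisely $\rho_n\ge 0$, and not identically zero).

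The heart of the argument, which is also (ii), is a Harnack inequality for nonnegative solutions of the divergence-form equation $\operatorname{div}(A^T\nabla\rho+\rho\mathbf{H}_n)=0$ with drift in $L^p$, $p>d$. I would invoke the Moser/Trudinger theory (\cite{T73}, or Gilbarg--Trudinger Thm.~8.17--8.18, 8.20), which gives
\[
\sup_{B_{3R}}\rho_n\le K_1\inf_{B_{3R}}\rho_n ,
\]
with $K_1$ depending only on $d,\lambda,M,R,p$ and $\|\mathbf{H}_n\|_{L^p}\le\|h\|_{L^p}$; the key point is that $K_1$ is independent of $n$. I would normalize $\rho_n(x_1)=1$; this uses the Hölder continuity (De Giorgi--Nash), which is also uniform in $n$. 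Combining the normalization with Harnack gives uniform bounds $K_1^{-1}\le\rho_n\le K_1$ on $B_{4R}$, after running Harnack on a chain of balls to cover $B_{4R}$ inside $B_{5R}$. Caccioppoli estimates then give a uniform $H^{1,2}(B_{4R})$ bound, and De Giorgi--Nash gives a uniform $C^{\alpha}$ bound.

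Next I would extract a subsequence converging weakly in $H^{1,2}(B_{4R})$ and locally uniformly by Arzelà--Ascoli. The limit $\rho$ is continuous, satisfies $\rho(x_1)=1$, is positive, and solves the weak equation, since $\rho_n\mathbf{H}_n\to\rho\mathbf{H}$ in $L^2$. The Harnack bound passes to the limit, and because $\overline U\subset B_R\subset \overline B_{3R}$ it gives the ratio bound on $\overline U$. For (iii), once the $a_{ij}$ are continuous, I would rewrite the equation as $\operatorname{div}(A^T\nabla\rho)=-\operatorname{div}(\rho\mathbf{H})$ with $\rho\mathbf{H}\in L^p$, and apply $W^{1,p}$ estimates for divergence-form equations with continuous (VMO) coefficients to get $\rho\in H^{1,p}_{loc}$, hence $\rho\in H^{1,p}(U)$.

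The main obstacle is the $n$-uniform Harnack constant, together with the positivity and nontriviality of the approximants. The subtle point is that the drift enters in the conservative form $\rho\mathbf{H}$. Trudinger's theory handles the lower-order terms $\operatorname{div}(\rho\mathbf{H})$ exactly when $\mathbf{H}\in L^p$ with $p>d$, which is precisely why \textbf{(S)} requires $p>d$ rather than $p=d$.
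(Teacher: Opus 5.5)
The paper gives no argument of its own here. It quotes (i)--(ii) from \cite[Theorem 3.1]{L25jm} and (iii) from \cite[Theorem 4.1]{L25jm}, so your proposal is a genuinely different, self-contained route, and its overall structure is sound. Your ingredients are: the Dirichlet problem $-\operatorname{div}(A^T\nabla w_n+w_n\mathbf{H}_n)=\operatorname{div}\mathbf{H}_n$ in $H^{1,2}_0(B_{5R}(x_0))$, solved by Fredholm plus injectivity of $Lv=-\operatorname{div}(A\nabla v)+\langle\mathbf{H}_n,\nabla v\rangle$; Trudinger's Harnack and De Giorgi--Nash estimates with constants depending only on $d,\lambda,M,R,p$ and $\|\mathbf{H}_n\|_{L^p}\le\|h\|_{L^p(U)}$; Arzel\`a--Ascoli plus weak $H^{1,2}$ compactness; and, for (iii), interior $W^{1,p}$ estimates for continuous coefficients with $\rho\mathbf{H}\in L^p_{loc}$. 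Compared with the citation, your route makes visible that $K_1$ is just a Harnack-chain constant and why $p>d$ is needed. It also shows that (ii) holds for \emph{every} positive solution on $B_{4R}(x_0)$ if you apply Harnack to $\rho$ directly instead of passing the bound through the limit. One small adjustment: run the Harnack chain on, say, $\overline{B}_{9R/2}(x_0)$, so the Caccioppoli cutoff behind the uniform $H^{1,2}(B_{4R}(x_0))$ bound has room.

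The one step whose stated justification fails is $\rho_n\ge 0$. The adjoint operator $L^*\rho=-\operatorname{div}(A^T\nabla\rho+\rho\mathbf{H}_n)$ has no weak maximum principle in the usual sense. It carries the effective zero-order coefficient $-\operatorname{div}\mathbf{H}_n$, which has no sign, and the sup-form genuinely fails. For example, with $A=I$ and drift $x-x_0$, the solution with boundary value $1$ on $\partial B_{5R}(x_0)$ is $e^{(25R^2-\|x-x_0\|^2)/2}>1$. Only nonnegativity survives, and it has to come from duality with $L$:
\begin{itemize}
\item For $0\le f\in L^2$, let $v\in H^{1,2}_0(B_{5R}(x_0))$ solve $Lv=f$. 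Then $v\ge 0$ by the maximum principle for $L$, which has no zero-order term.
\item Test the equation for $\rho_n=1+w_n$ with $v$, and the equation for $v$ with $w_n$. This gives $\int\rho_n f\,dx=\int f\,dx-\int\langle\mathbf{H}_n,\nabla v\rangle\,dx$.
\item Test $Lv=f$ with $\chi_k=\min(kv,1)\in H^{1,2}_0$. This gives $k\int_{\{v<1/k\}}\langle A\nabla v,\nabla v\rangle\,dx+\int\langle\mathbf{H}_n,\nabla v\rangle\chi_k\,dx=\int f\chi_k\,dx\le\int f\,dx$.
\item Drop the nonnegative first term, let $k\to\infty$, and use $\nabla v=0$ a.e.\ on $\{v=0\}$. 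This yields $\int\rho_n f\,dx\ge 0$, hence $\rho_n\ge0$.
\end{itemize}
Nontriviality is automatic, since $\rho_n-1\in H^{1,2}_0$. Strict positivity, which you need to normalize at $x_1$, then follows from Harnack. With this repair your argument is a complete proof.
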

\begin{proof}
(i) and (ii) follow from \cite[Theorem 3.1(i), (ii)]{L25jm}, and (iii) follows from \cite[Theorem 4.1]{L25jm}.
\end{proof}

\begin{theo} \label{leetransform}
 Assume that {\bf (S)} holds and that $a_{ij} \in C(\mathbb{R}^d)$ for all $1 \leq i,j \leq d$.
Let $\rho \in H^{1,p}(U) \cap C(\overline{U})$ be a strictly positive function on $\overline{U}$ constructed as in Theorem \ref{rho_existence}. Define $\mu:=\rho \,dx$ and
\[
\mathbf{B} := \mathbf{H} + \frac{1}{\rho} A^T \nabla \rho \quad \text{on } U.
\]
Let $\theta \in [0, \infty)$ be a constant and  let $c \in L^s(U)$ satisfy $c \geq 0$ a.e. in $U$,  where $s \in (1, \infty)$ if $d=2$, and $s:=\frac{d}{2}$ if $d \geq 3$.
Then, $\mathbf{B} \in L^p(U, \mathbb{R}^d, \mu)$ and
\begin{equation} \label{divfreecond}
\int_U \langle  \mathbf{B}, \nabla \varphi \rangle d\mu = 0 \quad \text{for all } \varphi \in C_0^\infty(U).
\end{equation}
Let $\mathbf{u} \in L^2(0,T; H^{1,2}_0(U))$ and $\eta \in C_0^{\infty}((0,T))$.
Then, the following (i) and (ii) are equivalent:
\begin{itemize}
\item[(i)]
\begin{equation*} 
\int_0^T \int_{U} \mathbf{u}(t) \partial_t (\psi \eta)\,dxdt =\int_0^T \int_U \langle A \nabla \mathbf{u}(t), \nabla (\psi \eta) \rangle 
+ \langle \mathbf{H}, \nabla \mathbf{u}(t) \rangle (\psi \eta) + (c+\theta) \mathbf{u}(t) (\psi \eta) \,dx \,dt \quad \text{for all $\psi \in H^{1,2}_0(U)$}.
\end{equation*}
\item[(ii)]
\[
\int_0^T \int_{U}\mathbf{u} (t) \partial_t (\varphi  \eta) \, d\mu dt= \int_0^T \int_U \langle A \nabla \mathbf{u}(t), \nabla (\varphi  \eta) \rangle 
+ \langle \mathbf{B}, \nabla \mathbf{u}(t) \rangle (\varphi  \eta) + (c+\theta) \mathbf{u}(t) (\varphi \eta) \,d\mu \,dt \quad \text{for all $\varphi \in H^{1,2}_0(U)$}.
\]
\end{itemize}
\end{theo}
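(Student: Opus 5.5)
The plan has three parts: establish the integrability of $\mathbf{B}$, derive the divergence-free identity \eqref{divfreecond} from the equation for $\rho$ in Theorem~\ref{rho_existence}(i), and obtain the equivalence of (i) and (ii) through the substitution $\psi=\rho\varphi$ (equivalently $\varphi=\psi/\rho$).

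For the integrability, Theorem~\ref{rho_existence}(iii) gives $\rho\in H^{1,p}(U)$. By Theorem~\ref{rho_existence}(i),(ii) and continuity on the compact set $\overline U$, $\rho$ is bounded above and below by positive constants. Hence $\frac1\rho A^T\nabla\rho\in L^p(U,\mathbb{R}^d)$, since $A$ is bounded. Together with $\mathbf{H}\in L^p$ and the boundedness of $\rho$, this gives $\mathbf{B}\in L^p(U,\mathbb{R}^d,\mu)$.

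For \eqref{divfreecond}, note that $\rho\mathbf{B}=\rho\mathbf{H}+A^T\nabla\rho$ pointwise. Since $C_0^\infty(U)\subset C_0^\infty(B_{4R}(x_0))$, Theorem~\ref{rho_existence}(i) yields
\[
\int_U\langle\mathbf{B},\nabla\varphi\rangle\,d\mu=\int_{B_{4R}(x_0)}\langle A^T\nabla\rho+\rho\mathbf{H},\nabla\varphi\rangle\,dx=0 .
\]
Here I use that $\mathbf{H}$ is extended by zero and that $\varphi$ vanishes outside $U$. I will also need this identity for test functions $\varphi\in H^{1,2}_0(U)$ rather than only $C_0^\infty$. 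This follows by density: $\rho\mathbf{B}\in L^p\subset L^2$ because $U$ is bounded and $p>d\ge2$, so the functional $\varphi\mapsto\int\langle\rho\mathbf{B},\nabla\varphi\rangle\,dx$ is continuous on $H^{1,2}_0(U)$.

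For the equivalence, the key observation is that multiplication by $\rho$ and by $1/\rho$ are bijections of $H^{1,2}_0(U)$. This holds because $\rho,1/\rho\in H^{1,p}(U)\cap L^\infty$ with $p>d$: the product rule gives $\nabla(\rho\varphi)=\rho\nabla\varphi+\varphi\nabla\rho$, and $\varphi\nabla\rho\in L^2$ by Hölder combined with Sobolev, using $\varphi\in L^{2d/(d-2)}$ when $d\ge3$ or $\varphi\in L^r$ for all $r<\infty$ when $d=2$, against $\nabla\rho\in L^p$ with $p>d$. Approximating $\varphi$ by $C_0^\infty$ functions keeps the products in $H^{1,2}_0(U)$. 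Now fix $\varphi\in H^{1,2}_0(U)$, set $\psi=\rho\varphi$, and compute the terms of (i). The time-derivative and zero-order terms become the $\mu$-integrals in (ii), since $\rho$ does not depend on $t$. For the diffusion term,
\[
\langle A\nabla\mathbf{u},\nabla(\rho\varphi\eta)\rangle=\langle A\nabla\mathbf{u},\nabla(\varphi\eta)\rangle\rho+\langle \nabla\mathbf{u},A^T\nabla\rho\rangle\varphi\eta,
\]
and the last piece combines with $\langle\mathbf{H},\nabla\mathbf{u}\rangle\rho\varphi\eta$ to give exactly $\langle\mathbf{B},\nabla\mathbf{u}\rangle\varphi\eta\,\rho$. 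Thus (i) with test function $\psi=\rho\varphi$ is literally (ii) with test function $\varphi$. Since $\varphi\mapsto\rho\varphi$ is onto $H^{1,2}_0(U)$, the two statements are equivalent. All products must be integrable, which I will justify by the same Hölder–Sobolev bounds as in Proposition~\ref{stamenest}(i).

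This direct substitution does not actually use \eqref{divfreecond}; that identity is needed only for the Dirichlet form setting of Section~\ref{dfapproach}. The main obstacle is therefore the justification that $\rho\varphi$ and $\varphi/\rho$ belong to $H^{1,2}_0(U)$ with the product rule valid, together with integrability of $\langle\nabla\mathbf{u},A^T\nabla\rho\rangle\varphi$. Both rest on $\nabla\rho\in L^p$ with $p>d$, which is why continuity of $a_{ij}$ (to invoke Theorem~\ref{rho_existence}(iii)) is assumed.
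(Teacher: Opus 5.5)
Your proposal is correct and is essentially the paper's proof: the paper also proves (i)$\Rightarrow$(ii) by testing (i) with $\psi=\rho\varphi$, and (ii)$\Rightarrow$(i) by testing (ii) with $\varphi=\psi/\rho$, absorbing $\frac{1}{\rho}A^T\nabla\rho$ into $\mathbf{B}$ without ever invoking \eqref{divfreecond}. Your extra justifications fill in details the paper leaves implicit: that multiplication by $\rho$ and $1/\rho$ preserves $H^{1,2}_0(U)$ because $\nabla\rho\in L^p$ with $p>d$, and that \eqref{divfreecond} follows directly from Theorem~\ref{rho_existence}(i).
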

\begin{proof}
Since $\mathbf{H} \in L^p(U, \mathbb{R}^d)$ and $\rho$ is bounded below and above by strictly positive constants,  $\mathbf{B} \in L^p(U, \mathbb{R}^d, \mu)$.
(i) $\Rightarrow$ (ii). Let $\varphi \in H^{1,2}_0(U)$ be arbitrarily given. Then, replacing $\psi$ by $\rho \varphi \in H^{1,2}_0(U)$ in (i),
\begin{align*}
&\int_0^T \int_{U}\mathbf{u} (t) \partial_t (\varphi  \eta) \, d\mu dt=\int_0^T \left(\int_U \langle A \nabla \mathbf{u}(t), \nabla (\rho \varphi) \rangle 
+ \langle \mathbf{H}, \nabla \mathbf{u}(t) \rangle (\rho \varphi) + (c+\theta) \mathbf{u}(t) (\rho \varphi) \,dx \right) \eta(t) \,dt  \\
&=\int_0^T \left(\int_U \langle A \nabla \mathbf{u}(t), \nabla \varphi \rangle\rho 
+ \langle \rho \mathbf{H} + A^T \nabla \rho, \nabla \mathbf{u}(t) \rangle \varphi + (c+\theta) \mathbf{u}(t) (\rho \varphi) \,dx \right) \eta(t) \,dt  \\
&=\int_0^T \left(\int_U \langle A \nabla \mathbf{u}(t), \nabla \varphi \rangle 
+ \langle \mathbf{B}, \nabla \mathbf{u}(t) \rangle \varphi + (c+\theta) \mathbf{u}(t) \varphi \,d\mu \right) \eta(t) \,dt \\
&= \int_0^T \int_U \langle A \nabla \mathbf{u}(t), \nabla (\varphi  \eta) \rangle 
+ \langle \mathbf{B}, \nabla \mathbf{u}(t) \rangle (\varphi  \eta) + (c+\theta) \mathbf{u}(t) (\varphi \eta) \,d\mu \,dt,
\end{align*}
and hence (ii) follows. \\ \\
(ii) $\Rightarrow$ (i). Let $\psi \in H^{1,2}_0(U)$ be arbitrarily given. Then, replacing $\varphi$ by $\frac{\psi}{\rho} \in H^{1,2}_0(U)$ in (ii),
\begin{align*}
&\int_0^T \int_{U} \mathbf{u}(t) \partial_t (\psi \eta)\,dxdt =\int_0^T \int_U \left \langle \rho A \nabla \mathbf{u}(t), \nabla \left(\frac{\psi}{\rho}  \eta \right) \right \rangle 
+ \langle \mathbf{B}, \nabla \mathbf{u}(t) \rangle (\psi  \eta) + (c+\theta) \mathbf{u}(t) (\psi \eta) \,dx \,dt \\
&= \int_0^T \left( \int_U \left \langle \rho A \nabla \mathbf{u}(t), \nabla \left(\frac{\psi}{\rho} \right) \right \rangle 
+ \langle \mathbf{B}, \nabla \mathbf{u}(t) \rangle \psi + (c+\theta) \mathbf{u}(t) \psi \,dx \right) \eta(t) \,dt  \\
&=\int_0^T \left( \int_U	\langle A \nabla \mathbf{u}, \nabla \psi	 \rangle  +\left \langle  -\frac{1}{\rho}A^T \nabla \rho+\mathbf{B}, \nabla \mathbf{u}(t) \right \rangle \psi + (c+\theta) \mathbf{u}(t)\psi \,dx \right) \eta(t)\,dt \\
&=\int_0^T \left( \int_U	\langle A \nabla \mathbf{u}, \nabla \psi	 \rangle  +\left \langle  \mathbf{H}, \nabla \mathbf{u}(t) \right \rangle\psi + (c+\theta) \mathbf{u}(t) \psi \,dx\right) \eta(t)\,dt \\
&=\int_0^T \int_U \langle A \nabla \mathbf{u}(t), \nabla (\psi  \eta) \rangle 
+ \langle \mathbf{H}, \nabla \mathbf{u}(t) \rangle (\psi  \eta) + (c+\theta) \mathbf{u}(t) (\psi \eta) \,dx \,dt,
\end{align*}
and hence (i) follows. 
\end{proof}
\begin{rem}[Divergence-free transformation] \label{leetransformrem}
Theorem~\ref{existinvar} provides a remarkable structural insight: it enables us to transform linear equations with general $L^p$-drifts, where $p \in (d, \infty)$, into equivalent equations with divergence-free drifts under an appropriately weighted measure. In particular, for general drifts $\mathbf{H}$, establishing the coercivity of the associated bilinear form typically requires adding a compensating constant, such as $\frac{N^2}{\lambda}$ in Proposition \ref{stamenest}. However, under the divergence-free structure obtained through Theorem \ref{leetransform}, the corresponding Dirichlet form can be directly constructed without adding such a compensating constant.\\
This transformation between the formulations in parts (i) and (ii) of Theorem \ref{leetransform} not only ensures the existence and uniqueness of solutions, but also facilitates deeper analysis of their regularity and stability. The conceptual foundation of this approach was first introduced in the elliptic setting in~\cite{L25jm}, where it led to new results on existence, uniqueness, and regularity that had previously been inaccessible via classical techniques. We shall refer to this transformation, which preserves the solution while reformulating the drift coefficient into a divergence-free structure, as the divergence-free transformation.
\end{rem}

\begin{lem} \label{weckcol2lem}
Let $T \in (0, \infty)$ and $\zeta:[0, T] \rightarrow [0, \infty)$ be a continuous function. Let $(\mathbf{u}_n)_{n \geq 1} \subset C([0,T]; L^2(U))$ be such that
\begin{equation} \label{l2boundsweakc}
\|\mathbf{u}_n(t) \|_{L^2(U)} \leq \zeta(t) \quad \text{ for all $t \in [0, T]$}.
\end{equation}
Then, there exists $\hat{\mathbf{u}} \in L^2(0,T;L^2(U))$ and a subsequence of $(\mathbf{u}_n)_{n \geq 1}$, say again, $(\mathbf{u}_n)_{n \geq 1}$ such that 
$$
\lim_{n \rightarrow \infty}\mathbf{u}_n = \hat{\mathbf{u}} \quad \text{ weakly in $L^2(0,T;L^2(U))$},
$$
and that
\begin{equation} \label{aeptwiselinqe}
\|\hat{\mathbf{u}}(t)\|_{L^2(U)} \leq \zeta(t) \quad \text{ for a.e. $t \in (0,T)$}.
\end{equation}
\end{lem}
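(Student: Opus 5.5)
The proof has two parts. First, extract a weakly convergent subsequence from a bounded sequence in the Hilbert space $L^2(0,T;L^2(U))$. Second, transfer the pointwise-in-time bound \eqref{l2boundsweakc} to the limit by testing against well-chosen functions and invoking Lebesgue's differentiation theorem (equivalently, a duality argument on the set where the bound fails).

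\textbf{Step 1 (weak compactness).} Since $\zeta$ is continuous on the compact interval $[0,T]$, it is bounded, say by $\|\zeta\|_\infty$. Then \eqref{l2boundsweakc} gives
\[
\|\mathbf{u}_n\|^2_{L^2(0,T;L^2(U))}=\int_0^T\|\mathbf{u}_n(t)\|_{L^2(U)}^2\,dt\le T\|\zeta\|_\infty^2 .
\]
Each $\mathbf{u}_n$ lies in $C([0,T];L^2(U))$, so it is strongly measurable and belongs to this space. The space $L^2(0,T;L^2(U))$ is a Hilbert space, so bounded sets are weakly sequentially compact. This yields a subsequence and a limit $\hat{\mathbf{u}}$ with $\mathbf{u}_n\rightharpoonup\hat{\mathbf{u}}$ weakly.

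\textbf{Step 2 (pointwise bound).} Fix a measurable set $E\subset(0,T)$ and an element $\mathbf{w}\in L^2(0,T;L^2(U))$ with $\|\mathbf{w}(t)\|_{L^2(U)}\le 1_E(t)$ for a.e.\ $t$. Weak convergence, followed by Cauchy--Schwarz in $L^2(U)$ and \eqref{l2boundsweakc}, gives
\[
\int_0^T\langle\hat{\mathbf{u}}(t),\mathbf{w}(t)\rangle_{L^2(U)}dt=\lim_{n\to\infty}\int_0^T\langle\mathbf{u}_n(t),\mathbf{w}(t)\rangle_{L^2(U)}dt\le\int_E\zeta(t)\,dt .
\]
Now choose the test function
\[
\mathbf{w}(t):=1_E(t)\,\frac{\hat{\mathbf{u}}(t)}{\|\hat{\mathbf{u}}(t)\|_{L^2(U)}}
\]
on the set where $\hat{\mathbf{u}}(t)\neq0$, and $\mathbf{w}(t):=0$ elsewhere. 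This $\mathbf{w}$ is strongly measurable because $\hat{\mathbf{u}}$ is, and the norm map is measurable. With this choice,
\[
\int_E\|\hat{\mathbf{u}}(t)\|_{L^2(U)}\,dt\le\int_E\zeta(t)\,dt\quad\text{for every measurable } E\subset(0,T).
\]
Finally, take $E:=\{t:\|\hat{\mathbf{u}}(t)\|_{L^2(U)}>\zeta(t)\}$. On this set the integrand $\|\hat{\mathbf{u}}(t)\|_{L^2(U)}-\zeta(t)$ is strictly positive, yet its integral over $E$ is nonpositive. Hence $E$ is a null set, which is exactly \eqref{aeptwiselinqe}. (Alternatively, one can take $E$ to be small intervals around $t$ and apply Lebesgue's differentiation theorem, using the continuity of $\zeta$.)

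The only delicate point is the measurability of the normalized test function $\mathbf{w}$ and the fact that it lies in $L^2(0,T;L^2(U))$. The bound $\|\mathbf{w}(t)\|_{L^2(U)}\le1$ settles integrability. For measurability, approximate $\hat{\mathbf{u}}$ a.e.\ by simple functions and note that the normalization map is continuous away from $0$. Everything else is routine.
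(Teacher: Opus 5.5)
Your proof is correct. It follows the same overall plan as the paper: weak compactness in $L^2(0,T;L^2(U))$, then testing the weak limit against a time-localized function built from $\hat{\mathbf{u}}$ itself. The localization step, however, is carried out differently.

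The paper tests with $\hat{\mathbf{u}}\,\eta$ for nonnegative $\eta\in C_0^\infty((0,T))$ and applies Cauchy--Schwarz jointly in space and time. This yields the quadratic inequality $\int_0^T\|\hat{\mathbf{u}}(t)\|_{L^2(U)}^2\eta\,dt\le\bigl(\int_0^T\zeta^2\eta\,dt\bigr)^{1/2}\bigl(\int_0^T\|\hat{\mathbf{u}}(t)\|_{L^2(U)}^2\eta\,dt\bigr)^{1/2}$. The paper then divides, squares, and invokes the fundamental lemma: an $L^1$ function that integrates nonnegatively against every nonnegative smooth test function is nonnegative a.e. It applies this to $\zeta^2-\|\hat{\mathbf{u}}\|_{L^2(U)}^2$.

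You instead normalize, testing with $1_E\,\hat{\mathbf{u}}/\|\hat{\mathbf{u}}\|_{L^2(U)}$ (set to $0$ where $\hat{\mathbf{u}}(t)=0$). You use Cauchy--Schwarz only in $L^2(U)$, pointwise in $t$. This gives the linear inequality $\int_E\|\hat{\mathbf{u}}(t)\|_{L^2(U)}\,dt\le\int_E\zeta\,dt$ for every measurable $E$. Choosing $E$ to be the exceptional set then finishes the argument immediately, with no division step and no appeal to density of smooth test functions.

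The price is the strong measurability check for the normalized test function. You handle it correctly, via simple-function approximation and continuity of $v\mapsto v/\|v\|$ away from $0$; alternatively, Pettis' theorem applies since $L^2(U)$ is separable. By contrast, the paper's test function $\hat{\mathbf{u}}\,\eta$ is obviously admissible in $L^2(0,T;L^2(U))$.
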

\begin{proof}
By \eqref{l2boundsweakc} and the weak compactness of $L^2(0,T; L^2(U))$, there exist $\hat{\mathbf{u}} \in L^2(0,T;L^2(U))$ and a subsequence of $(\mathbf{u}_n)_{n \geq 1}$, say again $(\mathbf{u}_n)_{n \geq 1}$, such that
\[
\lim_{n \rightarrow \infty} \mathbf{u}_n = \hat{\mathbf{u}} \quad \text{weakly in } L^2(0,T;L^2(U)).
\]
For each $\eta \in C_0^{\infty}((0,T))$ with $\eta \geq 0$ in $(0,T)$, by the Cauchy-Schwarz inequality we have
\begin{align*}
\int_0^T \int_U \mathbf{u}_n(t) \hat{\mathbf{u}}(t) \eta(t) \,dx\,dt 
&\leq \left(\int_0^T \int_U |\mathbf{u}_n(t)|^2 \eta(t) \,dx\,dt \right)^{1/2} \left( \int_0^T \int_U |\hat{\mathbf{u}}(t)|^2 \eta(t) \,dx\,dt \right)^{1/2} \\
&\leq \left(\int_0^T \zeta(t)^2 \eta(t) \,dt \right)^{1/2} \left( \int_0^T \int_U |\hat{\mathbf{u}}(t)|^2 \eta(t) \,dx\,dt \right)^{1/2}.
\end{align*}
Letting $n \rightarrow \infty$, the weak convergence implies that 
$$
\int_0^T \|\hat{\mathbf{u}}(t)\|_{L^2(U)}^2 \, \eta(t) \,dt \leq \left(\int_0^T \zeta(t)^2 \eta(t) \,dt \right)^{1/2} \left( \int_0^T \|\hat{\mathbf{u}}(t)\|_{L^2(U)}^2 \eta(t) \,dt \right)^{1/2}.
$$
Dividing by $\left( \int_0^T \|\hat{\mathbf{u}}(t)\|_{L^2(U)}^2 \eta(t) \,dt \right)^{1/2}$ and squaring both sides yields
$$
\int_0^T \|\hat{\mathbf{u}}(t)\|_{L^2(U)}^2 \eta(t) \,dt \leq \int_0^T \zeta(t)^2 \eta(t) \,dt.
$$
Since this holds for every positive function $\eta \in C_0^\infty((0,T))$, we deduce that $\|\hat{\mathbf{u}}(t)\|_{L^2(U)}^2 \leq \zeta(t)^2$ for a.e. $t \in (0,T)$.
i.e., \eqref{aeptwiselinqe} holds.
\end{proof}
\centerline{}
\noindent
Now we present the proof of our main result in this paper. \\ \\
\noindent\textbf{Proof of Theorem~1.1.}\;
(i) 
{\sf \underline{Step 1)} Construction of an approximation $(\mathbf{u}_n)_{n \geq 1}$ to the unique solution $\mathbf{u}$}:\\
By mollification, let $(A_n)_{n \geq 1} =\big( (a_{ij,n})_{1 \leq i,j \leq d}\big)_{n \geq 1}$ be a sequence of smooth matrix-valued functions satisfying
\[
\max_{1 \leq i,j \leq d} |a_{ij,n}(x)| \leq M, \quad \langle A_n(x) \xi, \xi \rangle \geq \lambda \|\xi\|^2 \quad \text{for all } x \in \mathbb{R}^d \text{ and  } \xi \in \mathbb{R}^d,
\]
and such that for each $1 \leq i,j \leq d$,
\[
\lim_{n \to \infty} a_{ij,n}(x) = a_{ij}(x) \quad \text{for a.e. } x \in \mathbb{R}^d.
\]
Let $\mathbf{u}_n \in L^2(0,T; H_0^{1,2}(U)) \cap C([0,\infty); L^2(U))$ denote the unique weak solution to \eqref{bvpara} with $A$ replaced by $A_n$, as constructed in Theorem~\ref{mainwellposth}. Then $\mathbf{u}_n(0) = g$ for all $n \geq 1$. By Theorem \ref{equiproweakso}, for all $v \in H_0^{1,2}(U)$ and $\eta \in C_0^\infty((0,T))$, the following identity holds:
\begin{equation} \label{iterativeint2}
\int_0^T \int_U -\mathbf{u}_n(t) \partial_t (v \eta) + \langle A_n \nabla \mathbf{u}_n(t), \nabla (v \eta) \rangle 
+ \langle \mathbf{H}, \nabla \mathbf{u}_n(t) \rangle (v \eta) + (c + \theta) \mathbf{u}_n(t) (v \eta) \, dx \, dt = 0.
\end{equation}
Moreover, again by Theorem \ref{mainwellposth}(i), the following estimate holds:
%\begin{equation} \label{gammcontra}
%\|\mathbf{u}_n(t)\|_{L^2(U)} \leq e^{\gamma t} \|g\|_{L^2(U)} \quad \text{for all } t \in [0, \infty),
%\end{equation}
%and
\begin{equation} \label{energweakco}
\| \mathbf{u}_n \|_{L^2(0,T; H_0^{1,2}(U))} \leq \left( \frac{e^{2\gamma T}}{\lambda} \right)^{1/2} \|g\|_{L^2(U)},
\end{equation}
where $\gamma := \frac{N^2}{\lambda}$ and $N \geq 0$ is the constant appearing in Proposition \ref{stamenest}\textup{(ii)}.
%By \eqref{gammcontra} and Lemma \ref{weckcol2lem},  there exist $\hat{\mathbf{u}} \in L^2(0,T;L^2(U))$ and a subsequence of $(\mathbf{u}_n)_{n \geq 1}$, again denoted by $(\mathbf{u}_n)_{n \geq 1}$, such that
%\begin{equation} \label{weakhatu}
%\lim_{n \rightarrow \infty} \mathbf{u}_n = \hat{\mathbf{u}} \quad \text{weakly in } L^2(0,T;L^2(U)),
%\end{equation}
%\begin{equation} \label{l2contracti}
%\|\hat{\mathbf{u}}(t)\|_{L^2(U)} \leq e^{\gamma t} \|g\|_{L^2(U)} \quad \text{ for a.e. $t \in (0,T)$}.
%\end{equation}
%and $\hat{\mathbf{u}}(0) = g$ in $L^2(U)$. %Moreover, by \eqref{gammcontra} and \eqref{weakhatu},
%\[
%\lim_{n \rightarrow \infty}\mathbf{u}_n =\hat{\mathbf{u}} \quad \text{weakly in } L^2(0,T; L^2(U)).
%\]
By \eqref{energweakco} and the weak compactness of $L^2(0,T; H^{1,2}_0(U))$, there exists $\tilde{\mathbf{u}} \in L^2(0,T; H^{1,2}_0(U))$ and a subsequence of $(\mathbf{u}_n)_{n \geq 1}$, again denoted by $(\mathbf{u}_n)_{n \geq 1}$, such that
\begin{equation} \label{weaklysobof}
\lim_{n \rightarrow \infty} \mathbf{u}_n = \tilde{\mathbf{u}} \quad \text{weakly in } L^2(0,T; H^{1,2}_0(U)).
\end{equation}
%It then follows form \eqref{energweakco} that
%\begin{equation} \label{energweakori}
%\| \tilde{\mathbf{u}} \|_{L^2(0,T; H^{1,2}_0(U))} \leq \left( \frac{e^{2\gamma T}}{\lambda} \right)^{1/2} \|g\|_{L^2(U)},
%\end{equation}
%and 
%\begin{equation} \label{uniquehatae}
%\hat{\mathbf{u}} = \tilde{\mathbf{u}} \quad \text{ in $L^2(0,T; L^2(U))$}. 
%\end{equation}
Passing to the limit in \eqref{iterativeint2} by the weak convergence \eqref{weaklysobof}, we obtain that for all $v \in H^{1,2}_0(U)$ and $\eta \in C_0^\infty((0,T))$,
\begin{equation} \label{iterativeintlim}
\int_0^T \int_U -\tilde{\mathbf{u}}(t) \partial_t (v \eta) + \langle A \nabla \tilde{\mathbf{u}}(t), \nabla (v \eta) \rangle 
+ \langle \mathbf{H}, \nabla \tilde{\mathbf{u}}(t) \rangle (v \eta) + (c + \theta) \tilde{\mathbf{u}}(t) (v \eta) \, dx \, dt = 0.
\end{equation}
For a.e. $t \in (0, T)$, define $\tilde{\mathbf{w}}(t) \in H^{-1,2}(U)$ as in \eqref{weaktimedri}. Then, as in the proof of Theorem \ref{equiproweakso} ((iii) $\Rightarrow$ (i)), we have $\tilde{\mathbf{u}}' = \tilde{\mathbf{w}} \in L^2(0,T; H^{-1,2}(U))$. Thus, by \cite[Section 5.9, Theorem 3(i)]{E10}, $\tilde{\mathbf{u}}$ admits a continuous version, still denoted by $\tilde{\mathbf{u}}$, such that
\[
\tilde{\mathbf{u}} \in C([0, T]; L^2(U)) \cap L^2(0, T; H^{1,2}_0(U)).
\]
We next show that $\tilde{\mathbf{u}}(0) = g$ in $L^2(U)$. By Theorem \ref{equiproweakso}, the identity \eqref{iterativeintlim} implies that for all $\mathbf{v} \in L^2(0,T; H^{1,2}_0(U))$,
\begin{equation} \label{weakformu}
\int_0^T \langle \tilde{\mathbf{u}}'(t), \mathbf{v}(t) \rangle_{H^{-1,2}(U)} \, dt 
+ \int_0^T \int_U \langle A \nabla \tilde{\mathbf{u}}(t), \nabla \mathbf{v}(t) \rangle 
+ \langle \mathbf{H}, \nabla \tilde{\mathbf{u}}(t) \rangle \mathbf{v}(t) 
+ (c + \theta) \tilde{\mathbf{u}}(t) \mathbf{v}(t) \, dx \, dt = 0.
\end{equation}
We now take $\tilde{\mathbf{v}} \in C^1([0, T]; H^{1,2}_0(U))$ with $\tilde{\mathbf{v}}(T) = 0$. Then, substituting $\tilde{\mathbf{v}}$ into \eqref{weakformu} and applying integration by parts in time, we obtain
\begin{align}
&-\int_0^T \langle \tilde{\mathbf{v}}'(t), \tilde{\mathbf{u}}(t) \rangle_{H^{-1,2}(U)} \, dt 
- \langle \tilde{\mathbf{v}}(0), \tilde{\mathbf{u}}(0) \rangle_{H^{-1,2}(U)} \nonumber \\
&\quad + \int_0^T \int_U \langle A \nabla \tilde{\mathbf{u}}(t), \nabla \tilde{\mathbf{v}}(t) \rangle 
+ \langle \mathbf{H}, \nabla \tilde{\mathbf{u}}(t) \rangle \tilde{\mathbf{v}}(t) 
+ (c + \theta) \tilde{\mathbf{u}}(t) \tilde{\mathbf{v}}(t) \, dx \, dt = 0. \label{weakformula2}
\end{align}
Similarly, applying Theorem \ref{equiproweakso} and integration by parts to \eqref{iterativeint2}, we obtain
\begin{align}
&-\int_0^T \langle \tilde{\mathbf{v}}'(t), \mathbf{u}_n(t) \rangle_{H^{-1,2}(U)} \, dt - \langle \tilde{\mathbf{v}}(0), g \rangle_{H^{-1,2}(U)} \nonumber \\
&\quad + \int_0^T \int_U \langle A_n \nabla \mathbf{u}_n(t), \nabla \tilde{\mathbf{v}}(t) \rangle 
+ \langle \mathbf{H}, \nabla \mathbf{u}_n(t) \rangle \tilde{\mathbf{v}}(t) 
+ (c + \theta) \mathbf{u}_n(t) \tilde{\mathbf{v}}(t) \, dx \, dt = 0. \label{weakformori}
\end{align}
Passing to the limit in \eqref{weakformori} by the weak convergence \eqref{weaklysobof},
we obtain
\begin{align}
&-\int_0^T \langle \tilde{\mathbf{v}}'(t), \tilde{\mathbf{u}}(t) \rangle_{H^{-1,2}(U)} \, dt - \langle \tilde{\mathbf{v}}(0), g \rangle_{H^{-1,2}(U)} \nonumber \\
&\quad + \int_0^T \int_U \langle A \nabla \tilde{\mathbf{u}}(t), \nabla \tilde{\mathbf{v}}(t) \rangle 
+ \langle \mathbf{H}, \nabla \tilde{\mathbf{u}}(t) \rangle \tilde{\mathbf{v}}(t) 
+ (c + \theta) \tilde{\mathbf{u}}(t) \tilde{\mathbf{v}}(t) \, dx \, dt = 0. \label{weakformori2}
\end{align}
Therefore, combining \eqref{weakformula2} and \eqref{weakformori2}, we conclude that
\[
\int_U (\tilde{\mathbf{u}}(0) - g) \tilde{\mathbf{v}}(0) \, dx = 0.
\]
Since $\tilde{\mathbf{v}}(0)$ can be arbitrary function in $H^{1,2}_0(U)$, we have $\tilde{\mathbf{u}}(0)=g$ in $L^2(U)$. Therefore, we conclude that $\tilde{\mathbf{u}}$ is a weak solution to \eqref{bvpara} by Theorem \ref{equiproweakso}. Ultimately, by the uniqueness result in Theorem \ref{theouniquene},  
\begin{equation*} %\label{uniqtiliroi}
\tilde{\mathbf{u}}=\mathbf{u}\quad\text{  in $C([0,T]; L^2(U))$}.
\end{equation*}
Therefore, we deduce from  \eqref{weaklysobof} that
%\begin{equation} \label{tildehatequ}
%\mathbf{u}(t) = \hat{\mathbf{u}}(t) \quad \text{ for a.e. $t \in (0,T)$},
%\end{equation}
\begin{equation} \label{fundaweaklim}
\lim_{n \rightarrow \infty}\mathbf{u}_n = \mathbf{u} \quad \text{ weakly in $L^2(0,T; H^{1,2}_0(U))$}.
\end{equation}
%\begin{equation} \label{fudweakliml2}
%\lim_{n \rightarrow \infty} \mathbf{u}_n(t)= \mathbf{u}(t) \quad \text{ weakly in $L^2(U)$} \quad \text{ for a.e. $t \in (0, \infty)$}.
%\end{equation}
\\ 
{\sf \underline{Step 2)} Transformation to an equation with weakly divergence-free vector fields}\\
Meanwhile, by Theorem \ref{existinvar}(i), (iii), there exists $\rho_n \in H^{1,2}(B_{4R}(x_0)) \cap C(B_{4R}(x_0))$ with $\rho_n \in H^{1,p}(U)$ and $\rho_n(x)>0$ for all $x \in B_{4R}(x_0)$ such that
    \begin{equation} \label{ellipticlee}
        \int_{B_{4R}(x_0)} \langle A_n^T \nabla \rho_n + \rho_n \mathbf{H}, \nabla \varphi \rangle dx = 0
        \quad \text{for all } \varphi \in C_0^\infty(B_{4R}(x_0)).
    \end{equation}
Moreover, by Theorem \ref{existinvar}(ii), there exists a constant $K_1 \geq 1$ which only depends on $d, \lambda, M, R, p, \|h\|_{L^p(U)}$ such that
    \begin{equation} \label{harnackineq}
        1 \leq \frac{\max_{\overline{U}} \rho_n}{\min_{\overline{U}} \rho_n} \leq K_1.
    \end{equation}
    Define $\mu_n=\rho_n dx$ and
\[
\mathbf{B}_n := \mathbf{H} + \frac{1}{\rho_n} A_n^T \nabla \rho_n \quad \text{on } U.
\]
Then, \eqref{ellipticlee} implies that
$$
\int_{U} \langle \mathbf{B}_n, \nabla \varphi \rangle d\mu_n=0 \quad \text{ for all $\varphi \in C_0^{\infty}(U)$}.
$$
Define a Dirichlet form $(\mathcal{E}_n, D(\mathcal{E}_n))$ as the closure of 
\begin{equation*} \label{underlydf2}
\mathcal{E}_n(f,g) = \int_U \langle A_n \nabla f, \nabla g \rangle \, d\mu_n 
+ \int_U \langle \mathbf{B}_n, \nabla f \rangle g \, d\mu_n + \int_U c f g \,d\mu_n, \quad f, g \in C_0^\infty(U),
\end{equation*}
(cf. \eqref{underlydf}). Let $(T_t^{(n)})_{t>0}$ be the sub-Markovian $C_0$-semigroup of contractions on $L^2(U,\mu_n)$ associated with $(\mathcal E_n,D(\mathcal E_n))$. Let
$\tilde{\mathbf{u}}_n(t):= e^{-\theta t} T^{(n)}_t g$, $t \in (0, \infty)$ and $\tilde{\mathbf{u}}_n(0):=g$ in $L^2(U)$.
By Theorem \ref{weaksolsemi}(i), (ii), $\tilde{\mathbf{u}}_n \in L^2(0,T;H^{1,2}_0(U)) \cap C([0,\infty); L^{2}(U))$ satisfies for any $v \in H^{1,2}_0(U)$ and $\eta \in C_0^{\infty}((0,T))$
\begin{equation} \label{tildeunsol}
\int_0^T \int_{U} \tilde{\mathbf{u}}_n(t) \partial_t (v \eta) \, d\mu_n dt= \int_0^T \int_U \left( \langle A_n \nabla \tilde{\mathbf{u}}_n, \nabla (v \eta) \rangle 
+ \langle \mathbf{B}_n, \nabla \tilde{\mathbf{u}}_n \rangle (v \eta) + (c+\theta) \tilde{\mathbf{u}}_n (v \eta) \,d\mu_n \right) \,dt,
\end{equation}
and
\begin{equation*} %\label{energyrhon}
\int_0^T \int_{U}  \|\nabla \tilde{\mathbf{u}}_n(t) \|^2 d\mu_n dt  \leq \frac{1}{2\lambda} \| g\|^2_{L^2(U, \mu_n)}.
\end{equation*}
Thus, by \eqref{harnackineq},
\begin{equation} \label{energyrhonnew}
\|\tilde{\mathbf{u}}_n \|_{L^2(0,T;H^{1,2}_0(U))}  \leq \left(\frac{K_1}{2\lambda} \right)^{\frac12} \| g\|_{L^2(U)}.
\end{equation}
Note that it follows from Theorem \ref{leetransform} with $\rho$ and $\mathbf{B}$ replaced by $\rho_n$ and $\mathbf{B}_n$, respectively, that $\tilde{\mathbf{u}}_n \in L^2(0,T; H^{1,2}_0(U)) \cap C([0,T]; L^{2}(U))$ satisfies $\tilde{\mathbf{u}}_n(0) = g$ in $L^2(U)$ and
\begin{align*} \label{leetransfores}
&\int_0^T \int_U \tilde{\mathbf{u}}_n(t) \, \partial_t(\psi \eta) \, dx \, dt \\
&= \int_0^T \int_U \langle A_n \nabla \tilde{\mathbf{u}}_n(t), \nabla (\psi \eta) \rangle 
+ \langle \mathbf{H}, \nabla \tilde{\mathbf{u}}_n(t) \rangle (\psi \eta) 
+(c+\theta) \tilde{\mathbf{u}}_n(t) (\psi \eta) \, dx \, dt \quad \text{ for all $\psi \in H^{1,2}_0(U)$ and $\eta \in C_0^\infty((0,T))$. }
\end{align*}
By the uniqueness result in Theorem \ref{theouniquene}, 
\begin{equation}  \label{uniqueuptoinf}
\mathbf{u}_n = \tilde{\mathbf{u}}_n \quad \text{in } C([0,T]; L^2(U)) \quad \text{ for all $n \geq 1$}.
\end{equation}
%Indeed, since $T>0$ is arbitrarily chosen, 
%\begin{equation} \label{uniqueuptoinf}
%\mathbf{u}_n = \tilde{\mathbf{u}}_n \quad \text{in } C([0,\infty); L^2(U)).
%\end{equation}
Finally, the assertion follows from \eqref{energyrhonnew} and \eqref{fundaweaklim}\\ \\
(ii) Applying Theorem \ref{weaksolsemi}(ii) to \eqref{tildeunsol},
\begin{equation} \label{l2contrarhonnew}
\|\tilde{\mathbf{u}}_n(t)\|_{L^2(U, \mu_n)} \leq  e^{-\kappa_{0,n}t}\|g\|_{L^2(U, \mu_n)} \quad \text{ for all $t \in [0, T]$},
\end{equation}
where 
$$
\kappa_{0,n}=\theta+ \frac{d^2 \lambda }{4(d-1)^2 |U|^{\frac{2}{d}}} \left(\frac{\inf_{U} \rho_n}{\sup_{U} \rho_n} \right).
$$
By Theorem \ref{existinvar}(ii) and \eqref{harnackineq}, $\kappa_{0,n} \geq \kappa$, and hence it follows from \eqref{l2contrarhonnew}, \eqref{harnackineq} and \eqref{uniqueuptoinf} that
\begin{equation*} 
\|\mathbf{u}_n(t)\|_{L^2(U)} \leq K_1^{\frac12}e^{-\kappa t} \|g\|_{L^2(U)} \quad \text{ for all $t \in [0, T]$}.
\end{equation*}
By Lemma \ref{weckcol2lem} and \eqref{fundaweaklim},
\begin{equation*} %\label{l2controrigina}
\|\mathbf{u}(t)\|_{L^2(U)} \leq K_1^{\frac12}e^{-\kappa t} \|g\|_{L^2(U)} \quad \text{ for a.e. $t \in (0, T)$}.
\end{equation*}
Since $\mathbf{u} \in C([0, \infty); L^2(U))$ and $T \in (0, \infty)$ is arbitrarily chosen, \eqref{expdecaacer} follows. \\ \\
(iii) 
\underline{\textsf{Step 1:}} Apply the Banach–Saks theorem (cf. \cite[Appendix A, Theorem 2.2]{MR92}) to the sequence $(\mathbf{u}_n)_{n \geq 1}$ in \eqref{fundaweaklim}. Then there exists a subsequence, still denoted by $(\mathbf{u}_n)_{n \geq 1}$, such that the Cesàro means
\begin{equation*} %\label{subsequen}
\mathbf{s}_N := \frac{1}{N} \sum_{k=1}^N \mathbf{u}_k, \quad N \geq 1,
\end{equation*}
converge strongly to $\mathbf{u}$ in $L^2(0,T; H_0^{1,2}(U))$.
Hence, there exists a strictly increasing sequence $(n_k)_{k \geq 1} \subset \mathbb{N}$ such that
\begin{equation}\label{subsequnk}
\lim_{k \to \infty} \mathbf{s}_{n_k}(t) = \mathbf{u}(t) \quad \text{in } H_0^{1,2}(U) \quad \text{for a.e. } t \in (0,T).
\end{equation}
Now define the set
\begin{equation} \label{defnofjset}
J := \left\{ t \in (0,T) : \lim_{k \to \infty} \mathbf{s}_{n_k}(t) = \mathbf{u}(t) \quad \text{ in $H^{1,2}_0(U)$} \right\}.
\end{equation}
Then, we have $| (0,T) \setminus J | = 0$. \\ \\
\underline{\sf Step 2:} Let $g \in L^{\infty}(U)$ and fix $t_1 \in J$. 
We will show \eqref{linftycontra}. 
Applying Theorem \ref{weaksolsemi}(iii) to \eqref{tildeunsol} and using \eqref{uniqueuptoinf}, we have
\[
\| \mathbf{u}_n(t_1) \|_{L^{\infty}(U)} \leq e^{-\theta t_1} \|g\|_{L^{\infty}(U)} \quad \text{for all } n \geq 1,
\]
and hence by the triangle inequality,
\begin{equation} \label{cesarolinfbd}
\| \mathbf{s}_{n_k}(t_1) \|_{L^{\infty}(U)} \leq e^{-\theta t_1} \|g\|_{L^{\infty}(U)} \quad \text{for all } k \geq 1,
\end{equation}
where $(n_k)_{k \geq 1} \subset \mathbb{N}$ is a sequence as in \eqref{subsequnk}. By the Banach--Alaoglu theorem, there exists a subsequence of $(n_k)_{k \geq 1}$, denoted again by $(n_k)_{k \geq 1}$ and a function $\mathbf{u}^*(t_1) \in L^{\infty}(U)$ such that
\[
\lim_{k \to \infty} \mathbf{s}_{n_k}(t_1) = \mathbf{u}^*(t_1) \quad \text{weakly$^*$ in } L^{\infty}(U),
\]
i.e., for all $\varphi \in L^1(U)$,
\[
\lim_{k \to \infty} \int_U \mathbf{s}_{n_k}(t_1) \, \varphi \, dx = \int_U \mathbf{u}^*(t_1) \, \varphi \, dx.
\]
Since it follows from the definition in \eqref{defnofjset} that $\lim_{k \rightarrow \infty}\mathbf{s}_{n_k}(t_1)=\mathbf{u}(t_1)$ in $H^{1,2}_0(U)$, we have
$$
\mathbf{u}^*(t_1)=\mathbf{u}(t_1)\quad \text{ in $L^2(U)$}.
$$
Since $\mathbf{s}_{n_k}(t_1) \to \mathbf{u}(t_1)$ weakly$^*$ in $L^{\infty}(U)$ as $k \rightarrow \infty$, by \eqref{cesarolinfbd}, the weak$^*$ lower semicontinuity of the $L^{\infty}$-norm yields
\begin{equation*} %\label{uniquetilde}
\|\mathbf{u}(t_1)\|_{L^{\infty}(U)} \leq \liminf_{k \to \infty} \|\mathbf{s}_{n_k}(t_1)\|_{L^{\infty}(U)} \leq e^{-\theta t_1} \|g\|_{L^{\infty}(U)}.
\end{equation*}
Since $t_1 \in J$ is an arbitrary point,
\begin{equation} \label{arbtjbound}
\|\mathbf{u}(t)\|_{L^{\infty}(U)} \leq e^{-\theta t} \|g\|_{L^{\infty}(U)} \quad \text{for all } t \in J.
\end{equation}
Let $t_* \in (0,T) \setminus J$ be arbitrarily fixed. Since $| (0,T) \setminus J | = 0$, we can extract a sequence of $(t_n)_{n \geq 1}$ in $J$ such that $\lim_{n \rightarrow \infty}t_n=t_*$. Thus, $\mathbf{u} \in C([0, \infty); L^2(U))$ implies that
\begin{equation*} 
\lim_{n \to \infty} \mathbf{u}(t_n) = \mathbf{u}(t_*) \quad \text{ in $L^2(U)$}.
\end{equation*} 
Thus, there exists a subsequence of $(t_n)_{n \geq 1} \subset J$, say again $(t_n)_{n \geq 1}$ such that
\begin{equation} \label{convergenc}
\lim_{n \to \infty} \mathbf{u}(t_n) = \mathbf{u}(t_*) \quad \text{ for a.e. $x \in U$}.
\end{equation}
Applying \eqref{arbtjbound} to each $t_n$ and using \eqref{convergenc}, we have
\begin{equation*} 
\|\mathbf{u}(t_*)\|_{L^{\infty}(U)} \leq e^{-\theta t_*} \|g\|_{L^{\infty}(U)}.
\end{equation*}
Since $t_*$ was arbitrary in $(0, T) \setminus J$,  it holds that
\[
\|\mathbf{u}(t)\|_{L^{\infty}(U)} \leq e^{-\theta t} \|g\|_{L^{\infty}(U)} \quad \text{for all } t \in (0,T).
\]
Since $T>0$ can be chosen arbitrarily, \eqref{linftycontra} follows.
\text{}\\ \\
\underline{\sf Step 3:} Let $r \in [2, \infty)$, $g \in L^r(U)$ and fix $t_1 \in J$. We aim to prove \eqref{linftcontrarca}.  
Applying Theorem \ref{weaksolsemi}(iii) to \eqref{tildeunsol} and using \eqref{harnackineq} and \eqref{uniqueuptoinf}, we obtain
\begin{equation} \label{imptestimr}
\| \mathbf{u}_n(t_1) \|_{L^{r}(U)} \leq K_1^{\frac1r}e^{-\theta t_1} \|g \|_{L^{r}(U)} \quad \text{for all } n \geq 1,
\end{equation}
and hence by the triangle inequality,
\begin{equation} \label{cestrianglein}
\| \mathbf{s}_{n_k}(t_1) \|_{L^{r}(U)} \leq K_1^{\frac1r}e^{-\theta t_1} \|g\|_{L^{r}(U)} \quad \text{for all } k \geq 1,
\end{equation}
where $(n_k)_{k \geq 1}\subset \mathbb{N}$ is a sequence as in \eqref{subsequnk}. 
By the weak compactness of $L^r(U)$, there exists a subsequence of $(n_k)_{k \geq 1}$, denoted again by $(n_k)_{k \geq 1}$ and a function $\mathbf{u}^*(t_1) \in L^{r}(U)$ such that
\[
\lim_{k \to \infty} \mathbf{s}_{n_k}(t_1) = \mathbf{u}^*(t_1) \quad \text{weakly in } L^r(U).
\]
In particular, by the definition in \eqref{defnofjset}, $\mathbf{u}^*(t_1) = \mathbf{u}(t_1)$. Moreover, we have
\[
\|\mathbf{u}(t_1)\|_{L^r(U)} \leq \liminf_{k \to \infty} \| \mathbf{s}_{n_k}(t_1) \|_{L^r(U)} \leq K_1^{\frac1r}e^{-\theta t_1} \|g\|_{L^r(U)},
\]
Finally, by the similar argument as in {\sf Step 2} and applying Fatou's lemma, we conclude \eqref{linftcontrarca}.
\text{}\\ \\
\underline{\sf Step 4:} Let $r \in [1, 2)$, $g \in L^2(U)$ and fix $t_1 \in J$. We will also prove \eqref{linftcontrarca}.  
Applying Theorem \ref{weaksolsemi}(iii) to \eqref{tildeunsol} and using \eqref{harnackineq} and \eqref{uniqueuptoinf}, we have
\eqref{imptestimr}
and hence by the triangle inequality, we get \eqref{cestrianglein}, where $(n_k)_{k \geq 1}$ is a sequence as in \eqref{subsequnk}. 
Since
\[
\lim_{k \to \infty} \mathbf{s}_{n_k}(t_1) = \mathbf{u}(t_1) \quad \text{ in } H^{1,2}_0(U)\; \text{ (and hence weakly in } L^r(U)),
\]
we obtain
\[
\|\mathbf{u}(t_1)\|_{L^r(U)} \leq \liminf_{k \to \infty} \|\mathbf{s}_{n_k}(t_1)\|_{L^r(U)} \leq K_1^{\frac1r }e^{-\theta t_1} \|g\|_{L^r(U)}.
\]
By the same reasoning as in {\sf Step 2} and applying Fatou's lemma, we conclude \eqref{linftcontrarca}.
\text{}\\ \\
\underline{\sf Step 5:} We will show that $\mathbf{u} \in C([0, \infty); L^r(U))$ for each $r \in [1, \infty)$. 
Since $\mathbf{u} \in C([0, \infty); L^2(U))$, it directly follows that $\mathbf{u} \in C([0, \infty); L^r(U))$ for each $r \in [1, 2]$.
Now fix $r \in (2, \infty)$ and let $g \in L^r(U)$. Then, there exists a sequence of functions $(g_n)_{n \geq 1} \subset L^{\infty}(U)$ such that $g_n \to g$ in $L^r(U)$. 
For each $\phi \in L^2(U)$, denote by $\tilde{\mathbf{u}}^{\phi} \in C([0, \infty); L^2(U)) \cap L^2(0,T; H^{1,2}_0(U))$ the unique weak solution to \eqref{bvpara} with initial condition $\phi$, as constructed in Theorem~\ref{mainwellposth}. 
Then, by linearity and uniqueness, we have for any $\phi, \psi \in L^2(U)$,
\[
\tilde{\mathbf{u}}^{\phi} -\tilde{\mathbf{u}}^{\psi} =\tilde{\mathbf{u}}^{\phi-\psi} \quad \text{in } C([0, \infty); L^2(U)).
\]
%In particular, Theorem \ref{maintheore}(ii) implies
%\begin{equation} \label{l2contrapr}
%\| \tilde{\mathbf{u}}^{\phi}(t) -\tilde{\mathbf{u}}^{\psi}(t) \|_{L^2(U)} \leq  K_1^{1/2} e^{-\kappa t}  \| \phi - \psi \|_{L^2(U)} \quad %\text{for all } t \in [0, \infty).
%\end{equation}
Moreover, by \eqref{linftcontrarca} in {\sf Step 3}, for all $\phi, \psi \in L^r(U)$,
\begin{equation} \label{lrcontrapr}
\| \tilde{\mathbf{u}}^{\phi}(t) -\tilde{\mathbf{u}}^{\psi}(t) \|_{L^r(U)} \leq  K_1^{\frac{1}{r}} e^{-\theta t}  \| \phi - \psi \|_{L^r(U)}.
\end{equation}
Let $f \in L^{\infty}(U)$. Then we claim $\tilde{\mathbf{u}}^{f} \in C([0, \infty); L^r(U))$. Indeed, for fixed $t_0 \in [0, \infty)$, we estimate using \eqref{linftycontra} in {\sf Step 2} and  $\tilde{\mathbf{u}}^{f} \in C([0, \infty); L^2(U))$:
\begin{align}
\|\tilde{\mathbf{u}}^{f}(t) - \tilde{\mathbf{u}}^{f}(t_0)\|_{L^r(U)}^r &= \int_U | \tilde{\mathbf{u}}^{f}(t) - \tilde{\mathbf{u}}^{f}(t_0) |^r dx \notag \\
&\leq \left( \|\tilde{\mathbf{u}}^{f}(t)\|_{L^{\infty}(U)} + \|\tilde{\mathbf{u}}^{f}(t_0)\|_{L^{\infty}(U)} \right)^{r-2} \| \tilde{\mathbf{u}}^{f}(t) - \tilde{\mathbf{u}}^{f}(t_0) \|_{L^2(U)}^2 \notag \\
&\leq \left( (e^{-\theta t} + e^{-\theta t_0})  \|f\|_{L^{\infty}(U)}\right)^{r-2} \| \tilde{\mathbf{u}}^{f}(t) - \tilde{\mathbf{u}}^{f}(t_0) \|^2_{L^2(U)} \to 0 \quad \text{as } t \to t_0, \label{strcontifpr}
\end{align}
and hence the claim is shown.\\
Finally, for general $g \in L^r(U)$, using the triangle inequality and \eqref{lrcontrapr}, we estimate: for each $n \geq 1$
\begin{align*}
\| \tilde{\mathbf{u}}^{g}(t) - \tilde{\mathbf{u}}^{g}(t_0) \|_{L^r(U)} &\leq \| \tilde{\mathbf{u}}^{g}(t) - \tilde{\mathbf{u}}^{g_n}(t) \|_{L^r(U)} + \| \tilde{\mathbf{u}}^{g_n}(t) - \tilde{\mathbf{u}}^{g_n}(t_0) \|_{L^r(U)} \\
&\quad + \| \tilde{\mathbf{u}}^{g_n}(t_0) - \tilde{\mathbf{u}}^{g}(t_0) \|_{L^r(U)} \\
&\leq K_1^{\frac{1}{r}} e^{-\theta t} \|g - g_n\|_{L^r(U)} + \| \tilde{\mathbf{u}}^{g_n}(t) - \tilde{\mathbf{u}}^{g_n}(t_0) \|_{L^r(U)} + K_1^{\frac{1}{r}} e^{-\theta t_0} \|g - g_n\|_{L^r(U)}.
\end{align*}
Combining the above with \eqref{strcontifpr} and applying a standard $3\varepsilon$-argument, 
$
\lim_{t \rightarrow t_0}\| \tilde{\mathbf{u}}^{g}(t) - \tilde{\mathbf{u}}^{g}(t_0) \|_{L^r(U)} =0.
$
Hence,
\[
\mathbf{u} \in C([0, \infty); L^r(U)) \quad \text{for all } r \in [1, \infty).
\]
\hfill$\square$

\section{Discussion} \label{discuss}

This paper has established not only the existence and uniqueness of solutions to the parabolic initial-boundary value problem \eqref{bvpara}, but also the exponential $L^2$-stability of the solution as $t \to \infty$. The exponential stability robustly holds under the assumption that the drift vector field $\mathbf{H}$ belongs to $L^p(U, \mathbb{R}^d)$ for some $p \in (d, \infty)$, and that the zero-order coefficient satisfies $c + \theta \in L^s(U)$ with $s \in (1, \infty)$ if $d = 2$ and $s = \frac{d}{2}$ if $d \ge 3$.
A natural and challenging direction for future investigation is whether the exponential $L^2$-stability remains valid under the more general condition \textbf{(Hy)} in Section~\ref{semigrouponl2dx}. The core of the current method lies in constructing a weight $\mu = \rho dx$ (Theorem~\ref{existinvar}) under the structural condition \textbf{(S)}, which is crucial for applying the divergence-free transformation (Theorem~\ref{leetransform}) and deriving a constant $K_1 \geq 1$ via an elliptic Harnack inequality. Since Theorem~\ref{existinvar} heavily relies on \textbf{(S)}, extending the result to the broader hypothesis on the drift coefficients as in \textbf{(Hy)} would require fundamentally new ideas, and constitutes a significant open problem. \\
Another important direction is to examine the pointwise exponential decay of the solution. More specifically, for each fixed point $x \in U$, and a weak solution $u$ to \eqref{bvpara}, one may ask whether the modulus $|u(x,t)|$ decays exponentially as $t \to \infty$. Addressing this question would require establishing the continuity of the solution and deriving suitable pointwise estimates. These estimates are intimately connected with the decay properties of the stochastic representation, in the special case $c=\theta=0$,
\[
\mathbb{E}_x\left[g(X_t^x)\mathbf{1}_{\{t<\tau_U^x\}}\right],
\]
as introduced in \eqref{stochobjects}, thereby offering a natural interface between the analytic theory of PDEs and stochastic processes.
A further perspective involves connecting the long-time behavior of the solution to the invariant measure of the associated diffusion process $(X_t^x)_{t \ge 0}$.
 In many settings, there exists an invariant probability measure $\mu$ such that, under suitable ergodicity conditions and for suitable $g$,
\[
\lim_{t \to \infty} \mathbb{E}[g(X_t^x)] = \int_{\mathbb{R}^d} g\,d\mu,
\]
(see \cite[Section 4.7]{FOT11} and \cite[Theorem 5.2.26]{BKRS15}). In regular settings, $\mathbb{E}_x[g(X_t^x)]$ coincides with $T(t)g(x)$, where $(T(t))_{t \ge 0}$ is the semigroup associated with the operator (see \cite[Theorem 2.5.2]{Lo07}). Thus, exponential convergence of the semigroup is closely connected to exponential ergodicity in the probabilistic sense, and in many classical settings these properties are governed by the spectral gap of the corresponding generator (cf. \cite{HHS05, BGL14}).
The coercivity of the drift term is known to ensure exponential convergence of the associated stochastic dynamics, and this property has been extended to various classes of nonlinear stochastic differential equations in \cite{EGZ19, BRS18}. Furthermore, exponential convergence has also been established even in the presence of significant degeneracy in the diffusion coefficients, as in the case of underdamped Langevin dynamics (see \cite{GW19}).
In connection with this, \cite[Theorem 1.1(iii)]{L25ai} establishes that the one-dimensional marginal distributions of solutions to SDEs, which are analytically characterized by the corresponding Fokker–Planck equations, converge to invariant measures under suitable conditions on the drift coefficient. Furthermore, without relying on any spectral gap assumption, \cite[Theorem 1.1]{Por24} demonstrates the exponential convergence of solutions in the $L^1$ norm with respect to an associated weighted measure.
\\ 
Hence, establishing exponential convergence to zero or to an invariant measure under minimal assumptions such as $\mathbf{H} \in L^p_{\text{loc}}(\mathbb{R}^d, \mathbb{R}^d)$ remains a deep and difficult problem. For instance, on the whole space $\mathbb R^d$, even the Brownian heat semigroup exhibits only polynomial $L^1$-to-$L^\infty$ decay of order $t^{-d/2}$, rather than exponential decay. Thus, a complete characterization of the convergence rate in such general settings would be a highly significant and nontrivial contribution. \\
Finally, although theoretical results suggest that drift coefficients can accelerate convergence (see \cite{HHS05}), this effect was not explicitly observed in our analysis. While our results demonstrate the robustness of exponential decay even in the presence of nonsymmetric drifts, whether such drifts quantitatively accelerate convergence remains an open question. Investigating this issue in the context of invariant measures could further clarify the convergence mechanisms of drift-driven dynamics.  Such insights are closely related to the theory underlying Markov chain Monte Carlo (MCMC) algorithms (cf. \cite{MCF15}), which rely on convergence to an invariant distribution and are widely used in applied fields such as image generation as in \cite{S19, SS21}. Developing rigorous analytic foundations for such phenomena remains an important and challenging goal.

\newpage

\section*{\normalsize Acknowledgments}
This work was supported by the National Research Foundation of Korea (NRF) grant funded by the Korea government (MSIT) (RS-2026-25598810) and the Gyeongbuk ANCHOR system-(Regional Growth Innovation LAB) program through the Gyeongbuk ANCHOR Center, funded by the Ministry of Education (MOE) and the Gyeongsangbuk-do, Republic of Korea (2026-anchor-15-105).

\section*{\normalsize Author contributions}
This manuscript is a single-author paper, and the author completed all aspects of the work.

\section*{\normalsize  Data availability}
No datasets were generated or analysed during the current study.

\section*{\normalsize  Ethics approval and consent to participate}
Not applicable.

\section*{\normalsize  Consent for publication}
Not applicable.

\section*{\normalsize  Competing interests}
The author declares no competing interests.

\centerline{}
\centerline{}
Haesung Lee\\
Department of Mathematics and Big Data Science,  \\
Kumoh National Institute of Technology, \\
Gumi, Gyeongsangbuk-do 39177, Republic of Korea, \\
E-mail: fthslt@kumoh.ac.kr, \; fthslt14@gmail.com
\end{document}